\documentclass[onefignum,onetabnum]{siamart171218}
\usepackage[margin=2.5cm]{geometry}
\usepackage{lineno,hyperref}
\modulolinenumbers[5]
\usepackage{lipsum}
\usepackage{graphicx}
\usepackage{epstopdf}
\usepackage{algorithmic}
\usepackage{algorithm}
\usepackage{wrapfig}
\usepackage[tight]{subfigure}
\usepackage{microtype}
\usepackage{enumerate}

\usepackage{amsfonts}
\usepackage[font=small,labelfont=bf]{caption}
\usepackage[english]{babel}
\usepackage{multirow}
\usepackage{wrapfig}
\usepackage{tabularx}
\usepackage[toc,page]{appendix}
\usepackage{marginnote}





\def\ie{{i.e. }}
\def\eg{{e.g. }}

\def\R{\mathbb{R}}

\def\u{\underline}

\DeclareMathOperator*{\argmin}{arg\,min}
\DeclareMathOperator*{\argmax}{arg\,max}

\graphicspath{{f/}}











\title{
Global Optimality in Separable Dictionary Learning \\with Applications to the Analysis of Diffusion MRI\thanks{Submitted to the editors Aug 23, 2019. First two authors contributed equally. \funding{B. Haeffele and R. Vidal were supported by the grant NSF 1618485.}}}

\author{Evan Schwab\thanks{Center for Imaging Science \& Department of Electrical and Computer Engineering, Johns Hopkins University.}
\and
Benjamin D. Haeffele\footnotemark[3] 
\and 
Ren\'{e} Vidal\thanks{Mathematical Institute for Data Science \& Department of Biomedical Engineering, Johns Hopkins University.}
\and
Nicolas Charon\thanks{Center for Imaging Science \& Department of Applied Mathematics and Statistics, Johns Hopkins University.}}

\begin{document}


\maketitle






\begin{abstract}
Sparse dictionary learning is a popular method for representing signals as linear combinations of a few elements from a dictionary that is learned from the data. In the classical setting, signals are represented as vectors and the dictionary learning problem is posed as a matrix factorization problem where the data matrix is approximately factorized into a dictionary matrix and a sparse matrix of coefficients. However, in many applications in computer vision and medical imaging, signals are better represented as matrices or tensors (e.g., images or videos). In such cases, instead of learning a large-scale dictionary tensor, it may be beneficial to exploit the multi-dimensional structure of the data to learn a more compact representation. One such approach is \emph{separable dictionary learning}, where one learns separate dictionaries for different dimensions of the data (e.g., spatial and temporal dimensions of a video). However, while there has been significant recent work on separable dictionary learning, typical formulations involve solving a non-convex optimization problem; thus guaranteeing global optimality remains a challenge. In this work, we propose a framework that builds upon recent developments in matrix factorization to provide theoretical and numerical guarantees of global optimality for separable dictionary learning. Specifically, we prove that local minima are guaranteed to be global when some dictionary atoms and the corresponding coefficients are zero. We also propose an algorithm to find such a globally optimal solution, which alternates between following local descent steps and checking a certificate for global optimality. We illustrate our approach on diffusion magnetic resonance imaging (dMRI) data, a medical imaging modality that measures water diffusion along multiple angular directions in every voxel of an MRI volume. State-of-the-art methods in dMRI either learn dictionaries only for the angular domain of the signals or in some cases learn spatial and angular dictionaries independently. In this work, we apply the proposed separable dictionary learning framework to learn spatial and angular dMRI dictionaries jointly and provide preliminary validation on denoising phantom and real dMRI brain data.

\end{abstract}
\begin{keywords} separable dictionary learning, tensor factorization, global optimality, diffusion MRI, HARDI.
\end{keywords}

\section{Introduction}
In signal processing, a well studied problem is that of reconstructing a signal from a set of noisy measurements by finding a representation of the signal in a chosen domain for which one can more easily process and analyze the data.  In the most general setting, one would like to represent (or approximate) a signal $y\in \mathbb{R}^N$ in terms of a dictionary $D \in \mathbb{R}^{N\times r}$ with $r$ dictionary atoms as 
\begin{equation}
\label{eq:rep}
y = Dw,
\end{equation}
where the coefficient vector $w \in \mathbb{R}^{r}$ is the representation of $y$ in terms of the dictionary $D$. This is in general an ill-posed problem and one typically imposes additional constraints on the reconstructed signal.  A common assumption is that $y$ is sparse with respect to the dictionary $D$, \ie $w$ has very few non-zero entries. This leads to the classical sparse coding problem:
\begin{equation}
\label{eq:SparseCoding}
\min_w \frac{1}{2}||Dw-y||_2^2 + \lambda ||w||_1,
\end{equation}
where $\lambda>0$ is a trade-off parameter between the sparsity term $\|w\|_1$ and the data fidelity term $\|Dw-y\|_2^2$.

There are many signal processing applications of sparse coding including denoising \cite{Elad:TIP06}, super-resolution \cite{Jianchao:TIP10} and Compressed Sensing (CS) \cite{Candes:ACHA11}.  For example, the goal of CS is to minimize the number of samples needed to accurately reconstruct a signal in order to accelerate signal acquisition. The typical number of measurements needed is directly linked to the sparsity of the representation which is obviously dependent on the choice of dictionary $D$. For different types of signals there may be an array of known dictionaries that produce sparse representations (\eg Wavelets for natural or medical images \cite{Lustig:MRM07}).  However, prescribing a known dictionary for a new signal or data type may lead to suboptimal sparsity levels. 

\subsection{Dictionary learning}
\label{sec:dictionaryLearning}
To overcome this limitation, in sparse dictionary learning we learn a dictionary directly from the signal itself (or from a set of training examples).
Although many different methodologies exist, typical formulations assume that one is given a training set of $T$ signals $\{y_t\}_{t=1}^T$ that resemble the signals of interest, and the goal is to approximate each signal $y_t$ as a sparse linear combination of the atoms $D_i$ from a dictionary $D$. Therefore, one considers an optimization problem of the form:
\begin{equation}
\label{eq:DictionaryLearning}
\min_{D,\{w_t\}} \frac{1}{2} \sum_{t=1}^T ||Dw_t-y_t||_2^2 + \lambda ||w_t||_1 \ \ \textnormal{s.t.} \ \ ||D_i||_2 \leq 1 \ \ \textnormal{for} \ \ i=1,\dots, r,
\end{equation}
where the constraints $\|D_i\|_2 \leq 1$ are enforced to prevent an unbounded solution for $D$.  Letting $Y = [y_1,\dots, y_T]$ and $W = [w_1,\dots,w_T]$ the problem can be written more compactly in matrix from as:
\begin{equation}
\label{eq:DictionaryLearningMat}
\min_{D,W} \frac{1}{2}||DW-Y||_F^2 + \lambda ||W||_1 \ \ \textnormal{s.t.} \ \ ||D_i||_2 \leq 1 \ \ \textnormal{for} \ \ i=1,\dots, r,
\end{equation}
where $||X||_F = \sqrt{\sum_{i,j}|X_{i,j}|^2}$ is the Frobenius norm and $\|X\|_1 = \sum_{ij} |X_{ij}|$ is the $\ell_1$ norm.

Many algorithms for solving this dictionary learning problem (or its variants) have been proposed \cite{Engan1999,Kreutz-Delgado2003,Mairal2010,dohmatob2016learning}. One well-known method is KSVD \cite{Elad:TSP06}, which alternates between solving for the $w_t$'s while $D$ is fixed (sparse coding update), and updating $D_i$ one by one (dictionary learning update) via SVD decomposition. Once a dictionary is learned, it can be used in \eqref{eq:SparseCoding} for sparsely representing a new (test) signal. One important downside of the dictionary learning problem \eqref{eq:DictionaryLearning} is that the joint optimization over $D$ and $W$ results in a non-convex problem, and therefore guaranteeing a globally optimal solution is a difficult challenge. At best, optimization algorithms such as gradient descent may reach stationary points which can be either local minima or saddle points, providing sometimes suboptimal dictionary solutions.

\subsection{Separable dictionary learning}
\label{sec:SepDictionaryLearning}
Note that the classical dictionary learning problem in \eqref{eq:DictionaryLearning} is defined for vector valued signals $y_t \in \mathbb{R}^N$. However, there are many applications where signals have additional structure that we would like to preserve.
For instance, for image data, instead of vectorizing an image and learning a vector-valued dictionary, it may be useful to preserve the 2D structure of the image. Similarly, in the case of diffusion magnetic resonance imaging (dMRI) data, it may be useful to preserve the spatial-angular structure of the data. 
Just as for vector valued signals (1D tensor or 1-tensor), the dictionary is a matrix (2-tensor), for matrix valued signals the dictionary can be written as a 3-tensor and likewise for any n-tensor valued signal. However, the resulting \emph{tensor dictionary learning} problem may be computationally intractable for very large-scale high-dimensional datasets. 
To reduce computational complexity and memory requirements, one can assume that the dictionary elements are \emph{separable} along the individual dimensions of the data, which leads to a problem known as \emph{separable dictionary learning}.

To mathematically introduce the notation of separable dictionary learning, consider a 2D signal $S \in \mathbb{R}^{G \times V}$. Similarly to \eqref{eq:rep}, $S$ can be written as a linear combination of $r$ dictionary atoms $\Phi_k \in \mathbb{R}^{G\times V}$ as $S = \sum_{k=1}^r c_k \Phi_k$. Now, recall from \eqref{eq:DictionaryLearning} that in the case of vector-valued signals, dictionary learning utilizes $T$ training examples $y_t$, each with coefficients $w_t$, such that $y_t = Dw_t$. For a matrix-valued signal, with $T$ training examples $S_t$, each with coefficients $c_{k,t}$, $S_t = \sum_{k=1}^r c_{k,t} \Phi_k$. In this setting, the full dictionary and set of coefficients can be represented as tensors of size $G \times V \times r$ and $G \times V \times r \times T$, respectively, which can be prohibitively large to estimate in practical applications.

In the separable dictionary learning setting, we assume each atom $\Phi_k$ can be decomposed as the product of two atoms along the individual dimensions, \ie $\Phi_k = \Gamma_i \Psi_j^\top$, where $\Gamma_i \in \mathbb{R}^G$ and  $\Psi_j \in \mathbb{R}^V$. Therefore, we write the signal as a bilinear combination of separable dictionaries i.e. $S = \Gamma C \Psi^\top = \sum_{i,j} c_{ij} \Gamma_i \Psi_j^\top$, where $\Gamma \in \mathbb{R}^{G \times r_1}$ and $\Psi \in \mathbb{R}^{V \times r_2}$ are dictionaries for the first and second dimensions respectively, and $C\in\mathbb{R}^{r_1 \times r_2}$ is the set of joint coefficients between dictionaries. Then, for each training example $S_t$ represented by coefficients $C_t$, we have $S_t = \Gamma C_t \Psi^\top$.
With these expressions, the separable dictionary learning problem can be stated as follows:
\begin{equation}
\label{eq:SeparableDictionaryLearning}
\min_{\Gamma,\Psi,\{C_t\}} \frac{1}{2} \sum_{t=1}^T|| \Gamma C_t \Psi^\top -  S_t ||^2_F + \lambda ||C_t||_1 \ \ \textnormal{s.t.} \ \ ||\Gamma_i||_2\leq 1,  ||\Psi_j||_2\leq 1 \ \ \forall \ \ (i,j).
\end{equation}


The problem of learning separable dictionaries via \eqref{eq:SeparableDictionaryLearning} and its multidimensional tensor generalization for higher-dimensional signals has received significant attention in the literature. The work of \cite{Hawe:CVPR13,Zhang:ICSP16,Zhang:Neurocomputing17} solve variations of \eqref{eq:SeparableDictionaryLearning} using conjugate gradient methods over smooth manifolds.  On the other hand, \cite{Qi:CVPR16,Zubair:DSP13} use tensor factorization techniques in which one solves alternatively for each mode of the tensor using the classical vector-valued dictionary learning techniques after $n$-mode unfolding. However, this loses the computational gain of maintaining a tensor structure.  The work of \cite{Duan:ICPR12, Roemer:ICASSP14, Stevens:AIS17} use decompositions such as Tucker, Kruskal-Factor and tensor SVDs, while \cite{dantas2017learning} considers a dictionary as the sum of Kronecker products. 
Finally, \cite{Bahri:ICCV17,ghassemi2017stark} propose low-rank variations of the separable dictionary learning problem.

As we recall for \eqref{eq:DictionaryLearning}, one key difficulty in dictionary learning is the lack of guarantees of global optimality due to the non-convexity of the joint optimization over the dictionary and coefficients. This issue is especially difficult for separable dictionary learning because the number of variables to jointly optimize over increases from two to three or more. To the best of our knowledge, none of the aforementioned work on separable dictionary learning come equipped with guarantees of global optimality, and so their solutions may correspond to a local minimum or saddle point and may also heavily depend on initialization.

\subsection{Paper contributions}
The main contribution of this work is a new framework for solving the separable dictionary learning in \eqref{eq:SeparableDictionaryLearning} with guarantees of global optimality. To do this, we build upon recent theoretical work on matrix factorization \cite{Bach:arXiv08,Haeffele:PAMI2019} which has been applied previously to provide theoretical guarantees for the original dictionary learning problem \eqref{eq:DictionaryLearning}.

Specifically, in Section~\ref{sec:Background}, we recall how classical dictionary learning \eqref{eq:DictionaryLearning} can be framed as a matrix factorization problem and make a quick summary of the global optimality results obtained in \cite{Haeffele:PAMI2019}. Then in Section~\ref{sec:proposed}, we consider a fairly general class of tensor factorization problems for which we obtain similar theoretical results of global optimality. In Section \ref{sec:glob_opt_dict_learning}, based on those results, we specialize the analysis to the case of separable dictionary learning in order to derive verifiable conditions for the global optimality of solutions. We also show that such formulations of separable dictionary learning can be equivalently understood as low-rank tensor factorizations of the data. Then, in Section~\ref{sec:algorithm}, we derive a novel algorithm to find optimal and compact separable dictionaries under our model, which we first experiment, in Section~\ref{sec:numerics}, on a small synthetic problem to illustrate the algorithm's convergence properties. Finally, Section~\ref{sec:dMRIbackground} provides a few preliminary results of the approach for learning separable spatial-angular dictionaries from diffusion magnetic resonance imaging data as well as some comparisons with other methods in basic denoising experiments. 






\section{Background}
\label{sec:Background}

\subsection{Dictionary learning as matrix factorization}
\label{sec:matrixfactorization}

The general problem of matrix factorization is concerned with finding factors $D$ and $W$, such that a data matrix $Y$ can be approximated by a matrix $X=DW$. Naturally, the dictionary learning problem \eqref{eq:DictionaryLearning} can be thought of in this way. In \cite{Bach:arXiv08,Haeffele:arXiv2015-PosFactor,Haeffele:ICML14,Haeffele:PAMI2019} the authors develop a general matrix factorization framework for a number of applications including the dictionary learning problem.
The key insight is an equivalence relation between the non-convex factorized problem with respect to the factors $D$ and $W$ and a convex problem with respect to $X$, which allows one to obtain guarantees of global optimality for $(D,W)$.


First, the non-convex matrix factorization problem can be written as:
\begin{equation}
\label{eq:fUV}
\min_{D,W} \ell(Y,DW) + \lambda \Theta(D,W),
\end{equation}
where $\ell$ is a data fidelity term or loss that measures the error between the original signal $Y$ and the reconstruction $X=DW$, and $\Theta$ is a regularizer on the factors $D$ and $W$ which promotes particular properties relevant to the problem. For the dictionary learning problem \eqref{eq:DictionaryLearning}, $\ell(Y,DW) = \frac{1}{2}||DW-Y||_F^2$. Furthermore it can be shown that the constraints $||D_i||_2\leq 1$ can be combined with the sparsity term $||W||_1$ to get $\Theta(D,W) = \sum_{i=1}^r ||D_i||_2||W^\top_i||_1$, where $W^\top_i \in \mathbb{R}^T$ is the $i^{th}$ row of $W$. Then \eqref{eq:fUV} is an equivalent problem formulation of \eqref{eq:DictionaryLearning}. The goal of recasting the dictionary learning problem as a matrix factorization problem is to relate \eqref{eq:fUV}, which is non-convex with respect to $D$ and $W$, to a convex problem with respect to $X$.

\subsection{Global optimality for matrix factorization}
\label{sec:globalMatrixFactorization}

To derive conditions for the global optimality, \cite{Haeffele:PAMI2019} first impose the regularizer to be of the specific form $\Theta(D,W) = \sum_i^r \theta(D_i,W^\top_i)$ where $\theta$ is a rank-1 regularizer that must satisfy the following properties:
\begin{definition}[from \cite{Haeffele:PAMI2019}]
\label{def:theta}
A function $\theta : \mathbb{R}^N \times \mathbb{R}^T \rightarrow \mathbb{R}_+ \cup \infty$ is said to be a \textbf{rank-1 regularizer} if
\begin{enumerate}
\item $\theta(u,v)$ is positively homogeneous with degree 2, \ie $\theta(\alpha u,\alpha v) = \alpha^2\theta(u,v) \ \forall \alpha \geq 0, \ \forall (u,v)$.
\item $\theta(u,v)$ is positive semi-definite, \ie $\theta(0,0) = 0$ and $\theta(u,v) \geq 0 \ \forall (u,v)$.
\item For any sequence $(u_n,v_n)$ such that $||u_n v_n^\top|| \rightarrow \infty$, we have that $\theta(u_n, v_n) \rightarrow \infty$.
\end{enumerate}
\end{definition}

It is easy to show that the choice of $\theta(u,v) = ||u||_2||v||_1$ fits this definition. Another example of $\theta$ satisfying Definition~\ref{def:theta} that can be used for dictionary learning is $\theta(u,v) = \|u\|_2 (\|v\|_2 + \alpha \|v\|_1)$ which promotes limiting the number of columns in $u$ and $v$ and also sparsity in $v$ as analyzed in \cite{Bach:arXiv08}.

Now, in order to connect the non-convex problem in \eqref{eq:fUV} with a convex problem with respect to matrix $X$, we introduce a related regularizer $\Omega_\theta(X)$ which depends on $\theta$:
\begin{definition}[from \cite{Haeffele:PAMI2019}] 
\label{def:Omega}
Given a rank-1 regularizer $\theta$ that satisfies the conditions of Definition~\ref{def:theta}, the \textbf{matrix factorization regularizer} $\Omega_\theta: \mathbb{R}^{N\times T} \rightarrow \mathbb{R}_+ \cup \infty$ is defined as:
\begin{equation}
\label{eq:Omega}
\Omega_\theta(X) \equiv \inf_{r\in \mathbb{N}_+} \inf_{D,W} \sum_{i=1}^r \theta(D_i,W^\top_i) \ \ \textnormal{s.t.} \ \ DW = X.
\end{equation}
If the infimum is achieved for some $D, W$ and $r$ then we say that $DW$ is an optimal factorization of $X$.
\end{definition}
It is important to note that the number of dictionary atoms $r$ becomes an important variable for finding an optimal matrix factorization in this definition. As a motivating example for the origin of $\Omega_\theta$, when $\theta(D_i,W^\top_i) = ||D_i||_2||W^\top_i||_2$, $\Omega_\theta(X)$ becomes the variational definition of the nuclear norm:
\begin{equation}
||X||_* \equiv \inf_{r\in \mathbb{N}_+} \inf_{D,W} \sum_{i=1}^r ||D_i||_2 ||W^\top_i||_2 \ \ \textnormal{s.t.} \ \ DW = X.
\end{equation}

From the results of \cite{Haeffele:PAMI2019}, $\Omega_\theta(X)$ is a gauge function (and even a norm if $\theta$ is symmetric, \ie $\theta(-u,v)=\theta(u,v)$ or $\theta(u,-v)=\theta(u,v)$ for all $u,v$), which leads to the new convex optimization problem with respect to $X$:
\begin{equation}
\label{eq:FUV}
\min_X \ell(Y,X) + \lambda \Omega_{\theta}(X).
\end{equation}

Since \eqref{eq:FUV} is convex, a local minimum $\hat{X}$ is guaranteed to be global.  The question answered in \cite{Haeffele:PAMI2019} is then how to relate a local minimum $(\tilde{D},\tilde{W})$ of the non-convex \eqref{eq:fUV} to a global minimum of the convex \eqref{eq:FUV} and when, if ever, we can say something about a global minimum $(\hat{D},\hat{W})$ of \eqref{eq:fUV}.
First, it is evident that \eqref{eq:FUV} provides a global lower bound of \eqref{eq:fUV} because $\Omega_\theta$ is the infimum of $\Theta$ and $\ell(Y,X) = \ell(Y,DW)$.
The main result is then that under certain conditions local minima $(\tilde{D},\tilde{W})$ of the non-convex \eqref{eq:fUV} are optimal factorizations of $X$, such that $\hat{X} = \tilde{D}\tilde{W}$. In other words, given a local solution $(\tilde{D},\tilde{W})$ to \eqref{eq:fUV}, we can write a matrix $X = \tilde{D}\tilde{W}$ and under certain conditions, it turns out that the matrix $X$ is a global minimum of \eqref{eq:FUV}, \ie $X\equiv\hat{X}$. Therefore, $(\tilde{D},\tilde{W})$ is in fact a global minimum of \eqref{eq:fUV}, $(\hat{D},\hat{W})$. We restate this main theorem of \cite{Haeffele:PAMI2019} here: 
\begin{theorem}
\label{thm:ben}
[from \cite{Haeffele:PAMI2019}] Given a function $\ell(S, X)$ that is convex and once differentiable w.r.t. $X$, a rank-1
regularizer $\theta$ that satisfies the conditions in Definition~\ref{def:theta}, with constants $r\in \mathbb{N}_+$, and $\lambda>0$, local minima $(\tilde{D},\tilde{W})$ of \eqref{eq:fUV} are globally optimal if $(\tilde{D}_i,\tilde{W}_i^\top) = (0,0)$ for some $i\in [r]$.  Moreover, $\hat{X} = \tilde{D}\tilde{W}$ is a global minima of \eqref{eq:FUV} and $\tilde{D}\tilde{W}$ is an optimal factorization of $\hat{X}$.
\end{theorem}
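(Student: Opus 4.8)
The plan is to distill, from the two hypotheses, the first-order optimality certificate of the convex surrogate \eqref{eq:FUV} and then transport it back to the factors by convex duality. Write $f(D,W)=\ell(Y,DW)+\lambda\Theta(D,W)$ (for factors of arbitrary inner dimension) and $F(X)=\ell(Y,X)+\lambda\Omega_\theta(X)$ for the two objectives, put $\tilde{X}=\tilde{D}\tilde{W}$ and $G\equiv-\frac{1}{\lambda}\nabla_X\ell(Y,\tilde{X})$, and recall $\Theta(D,W)=\sum_i\theta(D_i,W_i^\top)$. Since $f$ is invariant under a simultaneous permutation of the columns of $D$ and the rows of $W$, I may assume the vanishing atom pair is the last one, $(\tilde{D}_r,\tilde{W}_r^\top)=(0,0)$. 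The crucial perturbation replaces this zero column by $\epsilon u$ and this zero row by $\epsilon v^\top$, for arbitrary $u\in\mathbb{R}^N$, $v\in\mathbb{R}^T$ and small $\epsilon\ge 0$: because the pair started at the origin, the product changes only by the rank-one term $\tilde{X}\mapsto\tilde{X}+\epsilon^2 uv^\top$, and --- by degree-$2$ positive homogeneity of $\theta$ (Definition~\ref{def:theta}) and $\theta(0,0)=0$ --- the regularizer changes only by $\epsilon^2\theta(u,v)$. As the perturbed pair tends to $(\tilde{D},\tilde{W})$, local minimality forces $\ell(Y,\tilde{X}+\epsilon^2 uv^\top)-\ell(Y,\tilde{X})+\lambda\epsilon^2\theta(u,v)\ge 0$ for all small $\epsilon>0$; dividing by $\epsilon^2$, expanding $\ell$ to first order and letting $\epsilon\downarrow 0$ gives the certificate $\langle G,uv^\top\rangle\le\theta(u,v)$ for all $u,v$, which is the dual-feasibility half of the optimality conditions for \eqref{eq:FUV}.

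The second ingredient is the complementary-slackness identity $\langle G,\tilde{X}\rangle=\Theta(\tilde{D},\tilde{W})$. One inequality is free --- summing the certificate over the columns of $\tilde{D}$ gives $\langle G,\tilde{X}\rangle\le\sum_i\theta(\tilde{D}_i,\tilde{W}_i^\top)=\Theta(\tilde{D},\tilde{W})$ --- and for the reverse I would restrict $f$ to the radial curve $\tau\mapsto(\sqrt{\tau}\,\tilde{D},\sqrt{\tau}\,\tilde{W})$, $\tau\ge0$: here the product is $\tau\tilde{X}$ and, again by degree-$2$ homogeneity, $\Theta$ equals $\tau\,\Theta(\tilde{D},\tilde{W})$, so the restriction $g(\tau)=\ell(Y,\tau\tilde{X})+\lambda\tau\,\Theta(\tilde{D},\tilde{W})$ is differentiable, has an interior local minimum at $\tau=1$, and $g'(1)=0$ is exactly the desired identity. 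With both ingredients, global optimality follows from a standard convexity argument: for any $(D',W')$, lower-bound $\ell(Y,D'W')$ by the gradient inequality, apply the certificate columnwise to control $\langle G,D'W'\rangle$, add $\lambda\Theta(D',W')$, and use slackness --- this gives $f(D',W')\ge f(\tilde{D},\tilde{W})$, with the bound tight at $(\tilde{D},\tilde{W})$, and the estimate uses nothing about the inner dimension of $(D',W')$. For the ``moreover'' part, $F(DW)\le f(D,W)$ always (Definition~\ref{def:Omega}), and since $F(X)=\inf\{f(D,W):DW=X\}$ by the definition of $\Omega_\theta$ one has $\inf F=\inf_{D,W}f(D,W)$, so the chain $\inf F\le F(\tilde{X})\le f(\tilde{D},\tilde{W})=\inf_{D,W}f(D,W)=\inf F$ must collapse to equalities; this yields $F(\tilde{X})=\min F$ (so $\hat{X}=\tilde{D}\tilde{W}$ minimizes \eqref{eq:FUV}) and $\Omega_\theta(\tilde{X})=\Theta(\tilde{D},\tilde{W})$ (so $\tilde{D}\tilde{W}$ is an optimal factorization of $\hat{X}$).

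The one genuinely delicate point --- and the reason the hypothesis is precisely that \emph{some} atom pair vanishes --- is the manufacture of the certificate. It is exactly because $(\tilde{D}_r,\tilde{W}_r^\top)$ starts at the origin that the perturbation is ``second order'' in a controlled way (the product picks up a clean $\epsilon^2 uv^\top$ and the regularizer a clean $\epsilon^2\theta(u,v)$), so that the mere \emph{first}-order necessary condition at a local minimum already delivers the full global certificate. Absent such a pair one can only probe $\tilde{X}$ along a lower-dimensional cone of directions and the homogeneity bookkeeping on $\Theta$ no longer closes, which is why I would not expect the statement to survive dropping the hypothesis. Everything downstream of the certificate is routine: convexity of $\ell$, the additive structure of $\Theta$ over the rank-one factors, and --- for the radial step and the certificate --- only the positive homogeneity and nonnegativity of $\theta$.
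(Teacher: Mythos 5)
Your proof is correct. Note that the paper does not actually prove Theorem~\ref{thm:ben} itself (it is quoted from \cite{Haeffele:PAMI2019}); the paper's own proof of the closest statement is that of the tensor analogue, Theorem~\ref{thm:mainT}, and your argument is essentially its matrix specialization. The two perturbations you build at the local minimum --- the radial rescaling $\tau\mapsto(\sqrt{\tau}\,\tilde{D},\sqrt{\tau}\,\tilde{W})$ yielding $\langle G,\tilde{X}\rangle=\Theta(\tilde{D},\tilde{W})$, and the injection of $(\epsilon u,\epsilon v)$ into the vanishing atom pair yielding the certificate $\langle G,uv^\top\rangle\le\theta(u,v)$ --- are exactly the paper's Conditions 1 and 2, with the square roots of $(1\pm\epsilon)$ and $\epsilon$ replacing the cube roots used for three factors. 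The one genuine difference is how you close: the paper first establishes a Fenchel-duality characterization of $\partial\Omega_\theta$ (Lemma~\ref{thm:lemma1T}) and a sufficient condition for an optimal factorization (Corollary~\ref{thm:cor1T}), and then verifies that $-\frac{1}{\lambda}\nabla_X\ell$ satisfies it; you instead feed the certificate directly into the gradient inequality for the convex loss to obtain $f(D',W')\ge f(\tilde{D},\tilde{W})$ for factorizations of arbitrary inner dimension, and recover the claims about $F$ and the optimal factorization from the collapse of the chain $\inf F\le F(\tilde{X})\le f(\tilde{D},\tilde{W})=\inf f=\inf F$. Your route is marginally more elementary (no explicit computation of $\Omega_\theta^*$ or of the subdifferential) and makes explicit the slightly stronger conclusion that the local minimum is optimal over all inner dimensions, not only the fixed $r$; the paper's route buys a reusable subgradient characterization that it needs downstream anyway (in Corollaries~\ref{thm:corrNS} and~\ref{thm:corollary2T}).
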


Since $\theta$ is general, this matrix factorization can be applied to many problems such as low-rank, non-negative matrix factorization, sparse PCA as well as the desired dictionary learning.  However, one important downside for the application of dictionary learning is that the choices of $\theta$ stated above are not well suited to checking the criteria of Theorem~\ref{thm:ben} in practice. In particular verifying if a point is stationary or a local minimum remains difficult. Therefore, finding globally optimal solutions for classical dictionary learning still remains a challenging problem. In the next section we will extend the results of \cite{Haeffele:PAMI2019} for the more complex structured \textit{separable} dictionary learning. 

\section{Global optimality for tensor factorization}
\label{sec:proposed}

In this section we will extend the framework and theories of global optimality developed for matrix factorization \cite{Haeffele:ICML14,Haeffele:PAMI2019} to the more general case of \textit{tensor} factorization. Then, just as classical dictionary learning was cast as a matrix factorization problem, in Section \ref{sec:glob_opt_dict_learning} we will show that separable dictionary learning is a particular tensor factorization problem.

\subsection{Tensor factorization problem and notation}
\label{sec:tensorfactorization}
Similar to matrix factorization, tensor factorization is concerned with finding factors that decompose an $n$-tensor $\u{S} \in \mathbb{R}^{N_1\times N_2 \times \cdots \times N_n}$, where we use an underlined capital letter to denote a tensor. There are two main types of tensor decompositions: rank-1 decomposition, where each factor $f_i \in \mathbb{R}^{N_i}$ is a vector such that $\u{X} = f_1 \otimes f_2 \otimes \cdots \otimes f_n$ and $\otimes$ denotes the tensor product, and the Tucker decomposition, in which there is a core $n$-tensor $\u{C} \in \mathbb{R}^{r_1\times r_2 \cdots \times r_n}$ and matrix factors $F_i \in \mathbb{R}^{N_i\times r_i}$ such that $\u{X} = \u{C} \times_1 F_1 \times_2 F_2 \dots \times_n F_n$, where $\times_n$ stands for matrix multiplication along the $n^{th}$ dimension of the core tensor $\u{C}$. (See \cite{de2000multilinear} for a review of tensor decomposition.)

As in this paper we are interested in separable dictionary learning and its application to spatial-angular dMRI signals, we will simplify the rest of the presentation by restricting our discussion to decomposition of 3-tensors (although the results of this section can be readily extended to general $n$). Using notations consistent with \eqref{eq:SeparableDictionaryLearning}, we will consider the tensor signal $\u{S} \in \mathbb{R}^{G\times V \times T}$ where each slice $S_t\in \mathbb{R}^{G\times V}$ corresponds to a training example. Our goal will be to decompose this tensor (at least approximately) as $\u{S} = \u{C} \times_1 \Gamma \times_2 \Psi$, where $\u{C} \in \mathbb{R}^{r_1 \times r_2 \times T}$ is a core tensor and $\Gamma \in \R^{G \times r_1}$ and $\Psi \in \R^{V \times r_2}$ are two matrix factors. Note that this is equivalent to writing $S_t = \Gamma C_t \Psi^\top$ for each $t=1,\dots,T$ and that it can be interpreted as a Tucker decomposition of $\u{S}$ where the last factor $F_3$ is set to the identity unlike the setting analyzed \eg in \cite{Haeffele:arXiv2015-PosFactor}, which does not impose this constraint. To index the tensor $\u{C}$, all 2D slices will be written with an upper case letter and a single index, \eg $C_t \in \mathbb{R}^{r_1\times r_2}$ or $C_i \in \mathbb{R}^{r_2\times T}$ or $C_j \in \mathbb{R}^{r_1 \times T}$. Furthermore, 1D slice vectors of $\u{C}$ will be written with an upper case letter and two indices, \eg $C_{i,j}\in\mathbb{R}^T$ or $C_{i,t} \in \mathbb{R}^{r_2}$ or $C_{j,t} \in \mathbb{R}^{r_1}$. Finally, single elements of $\u{C}$ will be simply written as $c_{i,j,t}$.

Similar to the matrix factorization problem in \eqref{eq:fUV}, we will consider general tensor factorization problems formulated as:
\begin{equation}
\label{eq:fT}
\min_{\Gamma,\Psi,\u{C}} \{f(\Gamma,\Psi,\u{C}) \equiv \ell(\u{S}\ ,\ \u{C}\!\times_1\!\Gamma \times_2\!\Psi) + \lambda \Theta(\Gamma,\Psi,\u{C})\},
\end{equation}
where the first term $\ell$ is a measure of similarity to the data and $\Theta$ is a certain regularizer that enforces some constraints on the factorization. We will make the assumption that $\ell$ is separable in the different $t$ slices of the tensor, namely, with a slight abuse of notation, we will write $\ell(\u{S},\u{Y}) = \sum_{t=1}^{T} \ell(S_t,Y_t)$. For instance, the separable dictionary learning problem of \eqref{eq:SeparableDictionaryLearning} that we shall consider more specifically in the next section corresponds to the choice $\ell(\u{S},\u{C}\times_1 \Gamma \times_2 \Psi) = \frac{1}{2} ||\u{C}\times_1 \Gamma \times_2 \Psi - \u{S} ||_F^2 = \frac{1}{2}\sum_{t=1}^T||\Gamma C_t \Psi^\top - S_t||_F^2$ while the constraints $||\Gamma_i||_2\leq1$, $||\Psi_i||_2\leq1$ and sparse $\u{C}$ can be shown to be achieved by introducing a regularizer of the form:
\begin{equation}
    \Theta(\Gamma,\Psi,\u{C}) = \sum_{i=1}^{r_1} \sum_{j=1}^{r_2} ||\Gamma_i||_2||\Psi_j||_2||C_{i,j}||_1.
\end{equation}

Now, as in the previous case of matrix factorization, we wish to link stationary points of the non-convex $f(\Gamma,\Psi,\u{C})$ with a global minimum of a convex function with respect to $\u{X}$.

\subsection{A global optimality criterion}
To develop the theories of global optimality for separable dictionary learning we begin by extending Definitions~\ref{def:theta} and \ref{def:Omega} from Section~\ref{sec:globalMatrixFactorization}.  First, we will consider a regularizer in \eqref{eq:fT} of the form $\Theta(\Gamma,\Psi,\u{C}) = \sum_{i=1}^{r_1}\sum_{j=1}^{r_2}\theta(\Gamma_i,\Psi_j,C_{i,j})$, where $\theta$ satisfies the following conditions:
\begin{definition}
\label{def:theta_T}
A function $\theta: \mathbb{R}^G \times \mathbb{R}^V \times \mathbb{R}^T \rightarrow \mathbb{R}_+ \cup \infty$ is said to be a \textbf{rank-1 regularizer} if
\begin{enumerate}
\item $\theta$ is positively homogeneous of degree 3, \ie $\theta(\alpha \gamma,\alpha \psi,\alpha c) = \alpha^3\theta(\gamma,\psi,c) \ \forall \alpha \geq 0, \ \forall (\gamma,\psi,c)$. 
\item $\theta$ is positive semi-definite and $\theta(\gamma,\psi,c)>0$ if and only if $\gamma \otimes \psi \otimes c \neq 0$.
\item For any sequence $(\gamma_n,\psi_n,c_n)$ such that $||\gamma_n \otimes \psi_n \otimes c_n|| \rightarrow \infty$, we have $\theta(\gamma_n, \psi_n, c_n) \rightarrow \infty$.
\end{enumerate}
\end{definition}

Then, similarly to Definition~\ref{def:Omega}, we define the related regularizer for tensor $\u{X}$:
\begin{definition}
\label{def:Omega_T}
Given a rank-1 regularizer $\theta$ that satisfies the conditions of  Definition~\ref{def:theta_T}, the \textbf{tensor factorization regularizer} $\Omega_{\theta}: \mathbb{R}^{G\times V \times T} \rightarrow \mathbb{R}_+ \cup \infty$ is defined as:
\begin{equation}
\label{eq:OmegaT}
\Omega_{\theta}(\underline{X}) := \inf_{r_1,r_2 \in \mathbb{N}_+} \inf_{\substack{\Gamma \in \mathbb{R}^{G \times r_1}\\\Psi \in \mathbb{R}^{V \times r_2}\\\underline{C}\in \mathbb{R}^{r_1\times r_2\times T}}} \sum_{i=1}^{r_1}\sum_{j=1}^{r_2} \theta(\Gamma_i,\Psi_j,C_{i,j}) \ \ \textnormal{s.t.} \ \ \  \underline{C} \times_1 \Gamma \times_2 \Psi = \underline{X}.
\end{equation}
If the infimum is achieved for some $(\Gamma, \Psi, \u{C})$ and $r_1, r_2 \in \mathbb{N}_{+}$ then we say that $\u{C} \times_1 \Gamma \times_2 \Psi$ is an optimal factorization of $\u{X}$.
\end{definition}

\begin{proposition}
\label{appendix:propOmega}
Given regularizer $\theta$ that satisfies the properties of Definition~\ref{def:theta_T}, the tensor factorization regularizer $\Omega_{\theta}(\u{X})$ satisfies the following properties:
\begin{enumerate}
\item $\Omega_{\theta}(0) = 0$ and $\Omega_{\theta}(\u{X}) > 0 \ \forall \u{X}\neq0$.
\item $\Omega_{\theta}(\alpha \u{X}) = \alpha \Omega_{\theta}(\u{X}) \ \forall \alpha \geq 0 \ \forall \u{X}$.
\item $\Omega_{\theta}(\u{X}+\u{Y}) \leq \Omega_{\theta}(\u{X}) + \Omega_{\theta}(\u{Y}) \ \forall (\u{X},\u{Y})$.
\item If $\theta$ is symmetric about the origin in $\gamma$,$\psi$ or $c$, then $\Omega_{\theta}(-\u{X}) = \Omega_{\theta}(\u{X}) \ \forall \u{X}$ and $\Omega_\theta$ is a norm.
\item The infimum of $\Omega_{\theta}(\u{X})$ in \eqref{eq:OmegaT} can be achieved with finite $r_1$ and $r_2$, and $r_1,r_2 \leq G\times V \times T$.
\end{enumerate}
\end{proposition}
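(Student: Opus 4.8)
The plan is to recognize $\Omega_\theta$ as the gauge (Minkowski functional) of a convex, absorbing, bounded subset of $\R^{G\times V\times T}$: once this is done, parts 1--4 are essentially the standard properties of such gauges, while part 5 splits into a Carath\'eodory-type rank-reduction argument (the source of the explicit bound) and a compactness argument for attainment. First I would rewrite the infimum in \eqref{eq:OmegaT}: since $\u{C}\times_1\Gamma\times_2\Psi=\sum_{i,j}\Gamma_i\otimes\Psi_j\otimes C_{i,j}$ and $\theta(\gamma,\psi,0)=0$ (forced by part 2 of Definition~\ref{def:theta_T}, since $\gamma\otimes\psi\otimes0=0$), every feasible $(\Gamma,\Psi,\u{C})$ yields a decomposition $\u{X}=\sum_{m=1}^{k}\gamma_m\otimes\psi_m\otimes c_m$ of cost $\sum_m\theta(\gamma_m,\psi_m,c_m)$ with $k\le r_1r_2$, and conversely any such decomposition is realized by a feasible triple of the same cost with $r_1=r_2=k$ by placing the $c_m$ on the diagonal of $\u{C}$ and zeros off it (off-diagonal blocks cost nothing, again by part 2). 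Rescaling each term via the degree-$3$ homogeneity (part 1 of Definition~\ref{def:theta_T}) so that $\theta=1$ then identifies $\Omega_\theta$ with the gauge of $K:=\mathrm{conv}\{\gamma\otimes\psi\otimes c:\theta(\gamma,\psi,c)\le1\}$.

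Given this reformulation, parts 2 and 3 are the facts that any gauge is positively homogeneous of degree $1$ and subadditive; I would additionally record the explicit constructions --- rescaling all three factors by $\alpha^{1/3}$ for homogeneity, and concatenating near-optimal factorizations of $\u{X}$ and $\u{Y}$ block-diagonally in the $(i,j)$ indices for subadditivity. For part 1, $\Omega_\theta(0)=0$ because $\theta(0,0,0)=0$, and $\Omega_\theta(\u{X})>0$ for $\u{X}\neq0$ because $K$ is \emph{bounded}: the set $\{\gamma\otimes\psi\otimes c:\theta(\gamma,\psi,c)\le1\}$ has norm bounded by some $R<\infty$ (otherwise a sequence would violate the coercivity in part 3 of Definition~\ref{def:theta_T}), hence so does its convex hull $K$; a nonzero $\u{X}$ with $\Omega_\theta(\u{X})=0$ would then force $t^{-1}\u{X}$ to remain in $K$ as $t\to0^{+}$, contradicting boundedness. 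Part 4 follows because evenness of $\theta$ in one argument makes sign-flipping the corresponding factor a cost-preserving bijection between factorizations of $\u{X}$ and of $-\u{X}$, so $\Omega_\theta(-\u{X})=\Omega_\theta(\u{X})$; combined with parts 1--3 (and with the finiteness of $\Omega_\theta$, which holds since every $\u{X}$ admits the trivial factorization $\u{X}=\u{X}\times_1 I\times_2 I$), this makes $\Omega_\theta$ a norm.

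For part 5, the rank bound comes from a contradiction on the number of rank-one terms: if $\u{X}=\sum_{m=1}^{k}\gamma_m\otimes\psi_m\otimes c_m$ with $k>GVT$, then the tensors $\gamma_m\otimes\psi_m\otimes c_m$ are linearly dependent in $\R^{G\times V\times T}$, so $\sum_m\alpha_m(\gamma_m\otimes\psi_m\otimes c_m)=0$ for some $(\alpha_m)\neq0$; replacing $(\gamma_m,\psi_m,c_m)$ by $\bigl((1-\tau\alpha_m)^{1/3}\gamma_m,(1-\tau\alpha_m)^{1/3}\psi_m,(1-\tau\alpha_m)^{1/3}c_m\bigr)$ leaves the sum equal to $\u{X}$ and makes the cost the affine function $\tau\mapsto\sum_m(1-\tau\alpha_m)\theta(\gamma_m,\psi_m,c_m)$, so choosing $\tau$ at the endpoint of $\{\tau:1-\tau\alpha_m\ge0\ \forall m\}$ on which this function does not increase annihilates a coefficient and removes a term; iterating gives $k\le GVT$, hence $r_1=r_2\le G\times V\times T$ after reassembly. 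Attainment is then obtained from a minimizing sequence already reduced to at most $GVT$ terms: coercivity bounds each $\|\gamma_m^{(n)}\otimes\psi_m^{(n)}\otimes c_m^{(n)}\|$, so after passing to balanced representatives and a convergent subsequence the terms converge to rank-$\le1$ tensors summing to $\u{X}$, and lower semicontinuity of $\theta$ finishes the argument. I expect this last step to be the main obstacle: the map $(\gamma,\psi,c)\mapsto\gamma\otimes\psi\otimes c$ has a two-parameter scaling ambiguity under which $\theta$ is \emph{not} homogeneous, so bounding $\|\gamma\otimes\psi\otimes c\|$ does not bound $\gamma$, $\psi$, $c$ individually; one must normalize to balanced representatives while keeping the cost controlled, which in general needs a lower-semicontinuity property of $\theta$ not listed among the axioms of Definition~\ref{def:theta_T} --- though it does hold for the concrete regularizers of interest, e.g. $\theta(\gamma,\psi,c)=\|\gamma\|_2\|\psi\|_2\|c\|_1$, which is constant along each scaling orbit. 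The rank-reduction step itself, by contrast, is routine once set up and is precisely where the bound $G\times V\times T$ originates.
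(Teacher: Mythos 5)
Your proposal is correct and follows essentially the same route as the paper's Appendix A: the same explicit constructions for properties 1--4 (the $\alpha^{1/3}$ rescaling, block-diagonal concatenation of $\epsilon$-optimal factorizations, sign-flip bijection), the same reduction of \eqref{eq:OmegaT} to sums of rank-one tensors with a slice-diagonal core, the identification of $\Omega_\theta$ with the gauge of $\mathrm{conv}\{\gamma\otimes\psi\otimes c:\theta(\gamma,\psi,c)\le 1\}$, and a Carath\'eodory argument (yours in constructive rank-reduction form, the paper's by direct citation) for the bound $r_1,r_2\le G\times V\times T$. The subtlety you flag about attainment is real but is equally present in the paper: it asserts that $\Theta=\{\gamma\otimes\psi\otimes c:\theta(\gamma,\psi,c)\le1\}$ is compact ``thanks to the third condition'' of Definition~\ref{def:theta_T}, which yields boundedness but leaves closedness --- and hence the passage to balanced representatives and some lower semicontinuity of $\theta$ --- implicit, exactly the point you identify.
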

The proof of Proposition~\ref{appendix:propOmega} is provided in Appendix A.  By definition, satisfying the first three properties show that $\Omega_\theta$ is gauge function, and properties 2 and 3 show that $\Omega_\theta$ is convex. 
Then, with respect to $\u{X}$, we have the convex problem:
\begin{equation}
\label{eq:FT}
\min_{\u{X}} \{F(\underline{X}) \equiv \ell(\u{S}, \u{X})+ \lambda \Omega_{\theta}(\underline{X})\},
\end{equation}
where $F$ is a global lower bound for $f$.  The next theorem is an extension of Theorem~\ref{thm:ben} that relates global minimizers of the non-convex $f$ in \eqref{eq:fT} to the convex $F$ in \eqref{eq:FT}.
\begin{theorem}
\label{thm:mainT}
Given a function $\ell(\u{S},\u{X})$ that is convex and once differentiable w.r.t. $\u{X}$, a rank-1 regularizer $\theta$ that satisfies the conditions in Definition~\ref{def:theta_T}, and constants $r_1, r_2 \in \mathbb{N}_+$, and $\lambda >0$, any local minima $(\tilde{\Gamma},\tilde{\Psi},\tilde{\underline{C}})$ of $f(\Gamma,\Psi,\underline{C})$ in \eqref{eq:fT} is globally optimal if there exists $(i,j)$ such that $(\tilde{\Gamma}_{i},\tilde{\Psi}_{j}) = (0,0)$ and for all $t$, $(\tilde{C}_{i,t}, \tilde{C}_{j,t}) = (0,0)$. Moreover, $\hat{\underline{X}}=\tilde{\underline{C}} \times_1 \tilde{\Gamma} \times_2 \tilde{\Psi}$ is a global minimum of $F(\underline{X})$ in \eqref{eq:FT} and $\tilde{\underline{C}} \times_1 \tilde{\Gamma} \times_2 \tilde{\Psi}$ is an optimal factorization of $\hat{\underline{X}}$ in \eqref{eq:OmegaT}.
\end{theorem}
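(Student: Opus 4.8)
The plan is to adapt the argument behind Theorem~\ref{thm:ben} to the trilinear setting, exploiting that $\theta$ is now positively homogeneous of degree $3$ and that $\Omega_\theta$ is a convex gauge vanishing only at $0$ (Proposition~\ref{appendix:propOmega}, parts 1--3). Write $\u{X} := \tilde{\u{C}}\times_1\tilde{\Gamma}\times_2\tilde{\Psi} = \sum_{i=1}^{r_1}\sum_{j=1}^{r_2}\tilde{\Gamma}_i\otimes\tilde{\Psi}_j\otimes\tilde{C}_{i,j}$ and $\u{Z} := -\nabla_{\u{X}}\ell(\u{S},\u{X})$, and let $(i_0,j_0)$ be the pair furnished by the hypothesis, so $\tilde{\Gamma}_{i_0}=0$, $\tilde{\Psi}_{j_0}=0$, and the whole slices $\tilde{C}_{i_0}\in\R^{r_2\times T}$ and $\tilde{C}_{j_0}\in\R^{r_1\times T}$ of $\tilde{\u{C}}$ vanish; in particular the fibers $\tilde{C}_{i_0,j}$ and $\tilde{C}_{i,j_0}$ in $\R^T$ are $0$ for all $i,j$. (If $\Theta(\tilde{\Gamma},\tilde{\Psi},\tilde{\u{C}})=+\infty$ then $f\equiv+\infty$ near the point and the claim is vacuous, so assume it finite.) The endgame will be to show that $\u{X}$ globally minimizes the convex $F$ in \eqref{eq:FT} and that $(\tilde{\Gamma},\tilde{\Psi},\tilde{\u{C}})$ realizes $\Omega_\theta(\u{X})$: since $F$ is a lower bound of $f$, this forces $f(\tilde{\Gamma},\tilde{\Psi},\tilde{\u{C}}) = \ell(\u{S},\u{X})+\lambda\Omega_\theta(\u{X}) = F(\u{X}) = \min F \le \inf f$, i.e.\ global optimality, and identifies $\hat{\u{X}}=\u{X}$ with an optimal factorization.

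First I would extract a first-order inequality by perturbing the ``zero slot''. For arbitrary $(\gamma,\psi,c)\in\R^G\times\R^V\times\R^T$ and small $\epsilon>0$, replace the $i_0$-th column of $\Gamma$ by $\epsilon\gamma$, the $j_0$-th column of $\Psi$ by $\epsilon\psi$, and the $(i_0,j_0)$-fiber of $\u{C}$ by $\epsilon c$, leaving everything else fixed. The only rank-one terms of $\u{C}\times_1\Gamma\times_2\Psi$ touched are $\tilde{\Gamma}_{i_0}\otimes\tilde{\Psi}_j\otimes\tilde{C}_{i_0,j}$ ($j=1,\dots,r_2$), $\tilde{\Gamma}_i\otimes\tilde{\Psi}_{j_0}\otimes\tilde{C}_{i,j_0}$ ($i=1,\dots,r_1$), and the $(i_0,j_0)$ term, and all the $\u{C}$-fibers appearing here except the $(i_0,j_0)$ one vanish; hence the reconstruction changes by exactly $\epsilon^3\,\gamma\otimes\psi\otimes c$ and, by property 2 of Definition~\ref{def:theta_T}, the regularizer changes by exactly $\epsilon^3\,\theta(\gamma,\psi,c)$. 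Expanding $\ell$ to first order, using that the increment of $f$ under this perturbation is $\ge 0$, dividing by $\epsilon^3$ and letting $\epsilon\to 0^+$ gives
\[
 \langle \u{Z},\gamma\otimes\psi\otimes c\rangle \le \lambda\,\theta(\gamma,\psi,c)\quad\forall\,(\gamma,\psi,c).
\]

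Next I would combine this with a scaling identity: perturbing instead by the uniform rescaling $\big((1+\epsilon)\tilde{\Gamma},(1+\epsilon)\tilde{\Psi},(1+\epsilon)\tilde{\u{C}}\big)$ turns $f$ into $\ell\big(\u{S},(1+\epsilon)^3\u{X}\big)+\lambda(1+\epsilon)^3\Theta(\tilde{\Gamma},\tilde{\Psi},\tilde{\u{C}})$ by degree-$3$ homogeneity, and since $\epsilon=0$ is an interior local minimum of this differentiable one-variable function, its derivative there vanishes, giving $\langle \u{Z},\u{X}\rangle=\lambda\,\Theta(\tilde{\Gamma},\tilde{\Psi},\tilde{\u{C}})$. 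Now summing the displayed inequality over the rank-one terms of an arbitrary factorization of any $\u{Y}$ and taking the infimum over factorizations yields $\langle \u{Z},\u{Y}\rangle\le\lambda\,\Omega_\theta(\u{Y})$ for every $\u{Y}$; taking $\u{Y}=\u{X}$ and using that $(\tilde{\Gamma},\tilde{\Psi},\tilde{\u{C}})$ is one admissible factorization in \eqref{eq:OmegaT} sandwiches $\lambda\,\Theta(\tilde{\Gamma},\tilde{\Psi},\tilde{\u{C}})=\langle\u{Z},\u{X}\rangle\le\lambda\,\Omega_\theta(\u{X})\le\lambda\,\Theta(\tilde{\Gamma},\tilde{\Psi},\tilde{\u{C}})$, so all are equal: $\tilde{\u{C}}\times_1\tilde{\Gamma}\times_2\tilde{\Psi}$ is an optimal factorization of $\u{X}$ and $\langle\u{Z},\u{X}\rangle=\lambda\,\Omega_\theta(\u{X})$. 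These two facts are exactly the subgradient optimality conditions for the gauge $\Omega_\theta$, hence $\u{Z}/\lambda\in\partial\Omega_\theta(\u{X})$, so $0\in\nabla_{\u{X}}\ell(\u{S},\u{X})+\lambda\,\partial\Omega_\theta(\u{X})=\partial F(\u{X})$; convexity of $F$ gives $\hat{\u{X}}=\u{X}$, and the endgame above finishes the proof.

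I expect the real obstacle to be the bookkeeping in the first step: verifying that perturbing all three zero components simultaneously leaves both the reconstruction and the regularizer changed only at order $\epsilon^3$. This is precisely where the exact form of the hypothesis matters — it would not suffice for the single fiber $\tilde{C}_{i_0,j_0}$ to vanish, since $\tilde{\Gamma}_{i_0}$ and $\tilde{\Psi}_{j_0}$ each enter a whole family of rank-one terms, and one needs the entire $\u{C}$-slices $\tilde{C}_{i_0},\tilde{C}_{j_0}$ (together with $\tilde{\Gamma}_{i_0},\tilde{\Psi}_{j_0}$) to be zero so as to annihilate what would otherwise be lower-order contributions. A secondary point is that the gauge/subgradient calculus used at the end requires $\Omega_\theta$ to be a genuine gauge with $\Omega_\theta(\u{X})=0$ only at $\u{X}=0$, which is exactly what Proposition~\ref{appendix:propOmega} supplies.
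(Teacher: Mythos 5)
Your proposal is correct and follows essentially the same strategy as the paper's proof: it establishes the same two optimality conditions — the balance identity $\langle \u{Z},\u{X}\rangle = \lambda\,\Theta(\tilde\Gamma,\tilde\Psi,\tilde{\u{C}})$ via uniform rescaling of the three factors, and the dual-feasibility bound $\langle \u{Z},\gamma\otimes\psi\otimes c\rangle \le \lambda\,\theta(\gamma,\psi,c)$ by perturbing the zero column/column/fiber slot — and then concludes through the subgradient characterization of $\partial\Omega_\theta$ (which you re-derive directly rather than citing Lemma~\ref{thm:lemma1T} and Corollary~\ref{thm:cor1T}). The only cosmetic differences are your use of $\epsilon$ rather than $\epsilon^{1/3}$ scalings and the interior-critical-point argument in place of the paper's two one-sided directional derivatives; your bookkeeping of why the whole rows and columns of each $\tilde C_t$ must vanish matches the paper's hypothesis exactly.
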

In order to prove Theorem~\ref{thm:mainT}, we first note that since $F(\u{X})$ is convex, $\hat{\u{X}}$ is a global minimum of $F(\u{X})$ if and only if $ 0 \in \partial F(\u{X})$, which is equivalent to $-\frac{1}{\lambda} \nabla_{\u{X}} \ell(\u{S},\hat{\u{X}}) \in \partial\Omega_{\theta}(\hat{\u{X}})$. Therefore, we must first characterize the subgradient $\partial\Omega_{\theta}(\u{X})$, which is the subject of the following lemma.
\begin{lemma}
\label{thm:lemma1T}
The subgradient $\partial \Omega_{\theta}(\underline{X})$ is given by:
\begin{equation}
\left \{\underline{W}: \langle \underline{W},\underline{X} \rangle = \Omega_{\theta}(\underline{X}) \ \ \text{and} \ \ \sum_{t=1}^T c_t\gamma^\top W_t \psi \leq \theta(\gamma,\psi,c) \ \forall (\gamma,\psi,c) \right \},
\end{equation}
for $\gamma \in \mathbb{R}^{r_1}, \psi \in \mathbb{R}^{r_2}, c \in \mathbb{R}^T, where \  \langle \underline{W},\underline{X} \rangle := \sum_t \langle W_t,X_t\rangle.$
\end{lemma}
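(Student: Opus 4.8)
The plan is to recognize $\Omega_\theta$ as a gauge function, reduce the claim to the standard description of the subdifferential of a gauge, and then make the ``polar'' condition explicit using the rank-one structure that underlies the factorization in \eqref{eq:OmegaT}.

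First I would record that, by Proposition~\ref{appendix:propOmega}, $\Omega_\theta$ is a nonnegative, convex function that is positively homogeneous of degree one with $\Omega_\theta(\u{X}) = 0$ only at $\u{X} = 0$; in particular it is a gauge. For such a function one has $\u{W} \in \partial\Omega_\theta(\u{X})$ if and only if $\langle \u{W},\u{X}\rangle = \Omega_\theta(\u{X})$ and $\langle \u{W},\u{Y}\rangle \le \Omega_\theta(\u{Y})$ for every $\u{Y}$. I would justify this directly from the subgradient inequality $\Omega_\theta(\u{Y}) \ge \Omega_\theta(\u{X}) + \langle \u{W},\u{Y} - \u{X}\rangle$: taking $\u{Y} = 0$ yields $\langle\u{W},\u{X}\rangle \ge \Omega_\theta(\u{X})$, taking $\u{Y} = 2\u{X}$ and invoking degree-one homogeneity yields $\langle\u{W},\u{X}\rangle \le \Omega_\theta(\u{X})$, hence equality; substituting that equality back collapses the subgradient inequality to $\langle\u{W},\u{Y}\rangle \le \Omega_\theta(\u{Y})$ for all $\u{Y}$, and conversely these two conditions clearly imply the subgradient inequality.

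It then remains to show that ``$\langle\u{W},\u{Y}\rangle \le \Omega_\theta(\u{Y})$ for all $\u{Y}$'' is equivalent to ``$\sum_{t=1}^{T} c_t\, \gamma^\top W_t \psi \le \theta(\gamma,\psi,c)$ for all $(\gamma,\psi,c)$''. For the forward direction I would test the first condition on the rank-one tensor $\u{Y}$ whose slices are $Y_t = c_t\, \gamma\psi^\top$: this is exactly the factorization of \eqref{eq:OmegaT} with $r_1 = r_2 = 1$, $\Gamma = \gamma$, $\Psi = \psi$, $\u{C} = c$, so it is feasible and therefore $\Omega_\theta(\u{Y}) \le \theta(\gamma,\psi,c)$; since also $\langle\u{W},\u{Y}\rangle = \sum_t \langle W_t, c_t\gamma\psi^\top\rangle = \sum_t c_t\, \gamma^\top W_t\psi$, chaining the two gives the pointwise bound. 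For the reverse direction I would take an arbitrary $\u{Y}$ together with any feasible factorization $\u{Y} = \u{C}\times_1\Gamma\times_2\Psi = \sum_{i,j}\Gamma_i\otimes\Psi_j\otimes C_{i,j}$ and expand the inner product slice by slice,
\[
\langle\u{W},\u{Y}\rangle = \sum_{i=1}^{r_1}\sum_{j=1}^{r_2}\sum_{t=1}^{T} c_{i,j,t}\, \Gamma_i^\top W_t \Psi_j \le \sum_{i=1}^{r_1}\sum_{j=1}^{r_2} \theta(\Gamma_i,\Psi_j,C_{i,j}),
\]
applying the pointwise bound with $(\gamma,\psi,c) = (\Gamma_i,\Psi_j,C_{i,j})$, and then pass to the infimum over all $r_1,r_2$ and all such factorizations of $\u{Y}$ to conclude $\langle\u{W},\u{Y}\rangle \le \Omega_\theta(\u{Y})$.

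I do not anticipate a real obstacle here; the argument is essentially gauge/polar-gauge duality specialized to this construction. The two points needing a little care are: (i) the generic subdifferential identity for $\Omega_\theta$, which is a gauge but in general neither a norm nor attained at its defining infimum --- the homogeneity argument above avoids any appeal to attainment, using only that a \emph{single} feasible factorization bounds $\Omega_\theta$ from above; and (ii) the bookkeeping of the tensor inner product, i.e. checking $\langle\u{W}, \Gamma_i\otimes\Psi_j\otimes C_{i,j}\rangle = \sum_t c_{i,j,t}\, \Gamma_i^\top W_t\Psi_j$ from $\langle\u{W},\u{Y}\rangle := \sum_t\langle W_t,Y_t\rangle$ together with the identification of the $t$-th slice of $\Gamma_i\otimes\Psi_j\otimes C_{i,j}$ with $c_{i,j,t}\,\Gamma_i\Psi_j^\top$. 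The degenerate case $\Omega_\theta(\u{X}) = +\infty$, where $\partial\Omega_\theta(\u{X}) = \emptyset$, is consistent with the displayed set being empty as well, so no separate treatment is required.
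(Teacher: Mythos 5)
Your proposal is correct, and it reaches the same conclusion by a closely related but mechanically different route. The paper works through the Fenchel conjugate: it expands $\Omega_\theta^*(\u{W}) = \sup \langle \u{W},\u{Z}\rangle - \Omega_\theta(\u{Z})$ over all factorizations of $\u{Z}$, uses the positive homogeneity of $\theta$ and a scaling argument to show $\Omega_\theta^*(\u{W})$ is $+\infty$ when the rank-one condition fails and $0$ when it holds, and then invokes the Fenchel equality $\langle\u{W},\u{X}\rangle = \Omega_\theta(\u{X}) + \Omega_\theta^*(\u{W})$. You instead derive the gauge-subdifferential characterization ($\langle\u{W},\u{X}\rangle = \Omega_\theta(\u{X})$ together with $\langle\u{W},\u{Y}\rangle \le \Omega_\theta(\u{Y})$ for all $\u{Y}$) directly from the subgradient inequality via the test points $\u{Y}=0$ and $\u{Y}=2\u{X}$, and then convert the polar condition into the rank-one condition by testing on single-atom factorizations in one direction and summing over an arbitrary factorization and passing to the infimum in the other. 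The two arguments encode the same duality fact --- your condition ``$\langle\u{W},\cdot\rangle \le \Omega_\theta(\cdot)$'' is exactly ``$\Omega_\theta^*(\u{W})=0$'' --- but yours avoids conjugates entirely, is a bit more self-contained, and makes explicit (via the one-sided bound $\Omega_\theta(\u{Y})\le\theta(\gamma,\psi,c)$ from a single feasible factorization) that no attainment of the infimum in \eqref{eq:OmegaT} is needed. One cosmetic point in your favor: you correctly read $\gamma\in\mathbb{R}^G$, $\psi\in\mathbb{R}^V$, which is what the statement must mean for $\gamma^\top W_t\psi$ to make sense (the ``$\gamma\in\mathbb{R}^{r_1},\psi\in\mathbb{R}^{r_2}$'' in the lemma as printed is a typo).
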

\begin{proof}
Since $\Omega_{\theta}$ is convex, by Fenchel duality, $\u{W}\in \partial\Omega_{\theta}$ if and only if $\langle \u{W},\u{X}\rangle = \Omega_{\theta}(\u{X}) + \Omega^*_{\theta}(\u{W})$, where $\Omega^*_{\theta}$ is the Fenchel dual of $\Omega_{\theta}$ given by $\Omega^*_{\theta}(\u{W}) \equiv \sup_{\u{Z}} \langle \u{W},\u{Z}\rangle - \Omega_{\theta}(\u{Z})$.  From the definition of $\Omega_{\theta}(\u{Z})$ we can expand the dual as
\begin{subequations}
\begin{align}
\Omega^*_{\theta}(\u{W}) 
&= \sup_{Z} ~ \langle \u{W},\u{Z} \rangle  - \inf_{r_1,r_2} \inf_{\Gamma,\Psi,\u{C}}  ~ \sum_{i=1}^{r_1}\sum_{j=1}^{r_2} {\theta}(\Gamma_i,\Psi_j,C_{i,j}) \ ~ \textnormal{s.t.}~ \ \u{C} \times_1 \Gamma \times_2 \Psi = \u{Z}\\
&= \sup_{r_1,r_2} ~ \sup_{\Gamma,\Psi,\u{C}} ~ \langle \u{W},\u{C} \times_1 \Gamma \times_2 \Psi \rangle - \sum_{i=1}^{r_1}\sum_{j=1}^{r_2} {\theta}(\Gamma_i,\Psi_j,C_{i,j})\\
&= \sup_{r_1,r_2} ~ \sup_{\Gamma,\Psi,\u{C}} ~ \sum_{t=1}^{T} \langle \Gamma^\top W_t \Psi,C_t \rangle - \sum_{i=1}^{r_1}\sum_{j=1}^{r_2} {\theta}(\Gamma_i,\Psi_j,C_{i,j})\\ 
&= \sup_{r_1,r_2} ~ \sup_{\Gamma,\Psi,\u{C}} ~ \sum_{i=1}^{r_1} \sum_{j=1}^{r_2} \left(\sum_{t=1}^T c_{i,j,t}\Gamma_i^\top W_t \Psi_j -  {\theta}(\Gamma_i,\Psi_j,C_{i,j}) \right).
\label{eq:FenchelDual}
\end{align}
\end{subequations}
If there exists $(\gamma,\psi,c)$ such that $\sum_{t=1}^T c_t\gamma^\top W_t \psi > {\theta}(\gamma,\psi,c)$, we can see that $\Omega^*_{\theta}(\u{W}) = \infty$ by considering $(\alpha \gamma,\alpha \psi,\alpha c)$ as $\alpha \rightarrow \infty$ and using the positive homogeneity of $\theta$. 

Now let $\u{W}\in \partial\Omega_{\theta}(\underline{X})$. Then $\Omega^*_{\theta}(\u{W}) < +\infty$ and thus, from the previous argument, we have that $\sum_{t=1}^T c_t \gamma^\top W_t \psi \leq {\theta}(\gamma,\psi,c) \ \forall (\gamma,\psi,c)$. This also implies that all the terms 
inside the parenthesis of \eqref{eq:FenchelDual} will be non-positive, leaving the supremum to be $0$, which is achieved when $(\Gamma,\Psi,\u{C}) = (0,0,\u{0})$.  It follows that $\Omega^*_{\theta}(\u{W}) = 0$ and consequently $\langle \u{W},\u{X} \rangle = \Omega_{\theta}(\u{X})$.  

Conversely, if $\langle \u{W},\u{X} \rangle = \Omega_{\theta}(\u{X})$ and $\sum_{t=1}^T c_t\gamma^\top W_t \psi \leq {\theta}(\gamma,\psi,c) \ \forall (\gamma,\psi,c)$ then, reasoning as previously, we see that $\Omega^*_{\theta}(\u{W}) = 0$ which implies $\langle \u{W},\u{X} \rangle = \Omega_{\theta}(\u{X}) + \Omega^*_{\theta}(\u{W})$ and thus $\u{W} \in \partial \Omega_{\theta}(\u{X})$.
\end{proof}

Next, using the characterization of $\partial \Omega_\theta(\u{X})$ in Lemma~\ref{thm:lemma1T}, we identify when a factorization $\u{X}= \u{C}\times_1 \Gamma \times_2 \Psi$ is optimal, \ie when a point $(\Gamma,\Psi,\u{C})$ achieves the infimum of $\Omega_\theta(\u{X})$ in \eqref{eq:OmegaT}, in the following corollary.  

\begin{corollary}
\label{thm:cor1T}
For factorization $\u{X} = \u{C} \times_1 \Gamma \times_2 \Psi$, if there exists $\u{W}$ such that $\langle \u{W},\u{X}\rangle = \Theta(\Gamma,\Psi,\u{C})$ and $\sum_{t=1}^T c_t\gamma^\top W_t \psi \leq \theta(\gamma,\psi,c) \ \forall (\gamma,\psi,c)$, then $\u{W} \in \partial \Omega_{\theta}(\u{X})$ and $\u{C} \times_1 \Gamma \times_2\Psi$ is an optimal factorization of $\u{X}$, \ie it achieves the infimum of $\Omega_{\theta}(\u{X})$.
\end{corollary}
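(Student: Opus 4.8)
The plan is to show that the hypotheses force $\Omega_\theta(\u{X})$ to coincide with $\Theta(\Gamma,\Psi,\u{C})$, by squeezing $\Omega_\theta(\u{X})$ between that value from above and from below; once the equality is established, optimality of the factorization is immediate from Definition~\ref{def:Omega_T}, and the membership $\u{W}\in\partial\Omega_\theta(\u{X})$ drops out of Lemma~\ref{thm:lemma1T}.

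For the upper bound, I would observe that $(\Gamma,\Psi,\u{C})$ is a feasible point of the infimum defining $\Omega_\theta(\u{X})$ in \eqref{eq:OmegaT}, since $\u{C}\times_1\Gamma\times_2\Psi=\u{X}$; hence $\Omega_\theta(\u{X})\le\sum_{i,j}\theta(\Gamma_i,\Psi_j,C_{i,j})=\Theta(\Gamma,\Psi,\u{C})$, and in particular $\Omega_\theta(\u{X})<\infty$. For the lower bound, I would first invoke the Fenchel-dual computation carried out in the proof of Lemma~\ref{thm:lemma1T}: the hypothesis $\sum_{t=1}^T c_t\gamma^\top W_t\psi\le\theta(\gamma,\psi,c)$ for all $(\gamma,\psi,c)$ makes every summand inside the supremum that defines $\Omega^*_{\theta}(\u{W})$ non-positive, while the trivial factorization $(\Gamma,\Psi,\u{C})=(0,0,\u{0})$ attains the value $0$, so $\Omega^*_{\theta}(\u{W})=0$. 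The Fenchel--Young inequality, which is just the definition of the conjugate evaluated at $\u{X}$, then gives $\langle\u{W},\u{X}\rangle\le\Omega_\theta(\u{X})+\Omega^*_{\theta}(\u{W})=\Omega_\theta(\u{X})$, and combining with the hypothesis $\langle\u{W},\u{X}\rangle=\Theta(\Gamma,\Psi,\u{C})$ yields $\Theta(\Gamma,\Psi,\u{C})\le\Omega_\theta(\u{X})$. Chaining the two bounds forces $\Omega_\theta(\u{X})=\Theta(\Gamma,\Psi,\u{C})$, so $(\Gamma,\Psi,\u{C})$ achieves the infimum in \eqref{eq:OmegaT}, i.e.\ $\u{C}\times_1\Gamma\times_2\Psi$ is an optimal factorization of $\u{X}$. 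At that point $\langle\u{W},\u{X}\rangle=\Omega_\theta(\u{X})$ holds alongside the second hypothesis, which are precisely the two conditions in the characterization of Lemma~\ref{thm:lemma1T}, so $\u{W}\in\partial\Omega_\theta(\u{X})$; equivalently, $\langle\u{W},\u{X}\rangle=\Omega_\theta(\u{X})+\Omega^*_{\theta}(\u{W})$, the Fenchel equality characterizing the subgradient.

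The one point requiring care — the only real obstacle, and a mild one — is circularity: the identity $\Omega^*_{\theta}(\u{W})=0$ must be derived purely from the growth/homogeneity hypothesis on $\u{W}$, and not from $\langle\u{W},\u{X}\rangle=\Omega_\theta(\u{X})$, since the latter is part of what we are trying to prove. This is exactly why I would quote the first half of the proof of Lemma~\ref{thm:lemma1T}: its computation of $\Omega^*_{\theta}(\u{W})$ uses only Definition~\ref{def:Omega_T} and the positive homogeneity of $\theta$, with no reference to any prescribed $\u{X}$. Everything else reduces to the short inequality chain described above.
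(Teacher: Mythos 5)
Your proof is correct and uses essentially the same ingredients as the paper's: the Fenchel-dual computation from Lemma~\ref{thm:lemma1T} showing $\Omega^*_{\theta}(\u{W})=0$ under the second hypothesis, the Fenchel--Young inequality, and the feasibility of $(\Gamma,\Psi,\u{C})$ in \eqref{eq:OmegaT}. The only difference is presentational --- you arrange the argument as a direct squeeze $\Theta(\Gamma,\Psi,\u{C})\le\Omega_\theta(\u{X})\le\Theta(\Gamma,\Psi,\u{C})$ and deduce the subgradient membership last, whereas the paper establishes $\u{W}\in\partial\Omega_\theta(\u{X})$ first by contradiction and then reads off optimality; your handling of the potential circularity in computing $\Omega^*_{\theta}(\u{W})$ is also correct.
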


\begin{proof}
By contradiction, assume $\u{W} \notin \partial\Omega_{\theta}(\u{X})$. Then $\langle \u{W},\u{X} \rangle < \Omega_{\theta}(\u{X}) + \Omega^*_{\theta}(\u{W}) = \Omega_{\theta}(\u{X})$ because $\sum_{t=1}^T c_t \gamma^\top W_t \psi \leq \theta(\gamma,\psi,c) \ \forall (\gamma,\psi,c)$, implies $\Omega^*_{\theta}(\u{W}) = 0$ as in the proof of Lemma \ref{thm:lemma1T}.  Then, from our assumption, $\langle \u{W},\u{X} \rangle = \Theta(\Gamma,\Psi,\u{C}) = \sum_{i=1}^{r_1}\sum_{j=1}^{r_2} \theta(\Gamma_i,\Psi_j,C_{i,j}) < \Omega_{\theta}(\u{X})$ which violates the definition of $\Omega_{\theta}(\u{X})$ being the infimum, producing a contradiction. Therefore, $\u{W} \in \partial \Omega_{\theta}(\u{X})$. Now, since $\u{W} \in \partial \Omega_{\theta}(\u{X})$, by Lemma~\ref{thm:lemma1T}, $\langle \u{W},\u{X} \rangle = \Omega_{\theta}(\u{X})$, which implies  
$\Theta(\Gamma,\Psi,\u{C}) = \sum_{i=1}^{r_1}\sum_{j=1}^{r_2} \theta(\Gamma_i,\Psi_j,C_{i,j})= \Omega_{\theta}(\u{X})$ thus showing that $\u{C} \times_1 \Gamma \times_2\Psi$ achieves the infimum of $\Omega_\theta(\u{X})$ and is an optimal factorization of $\u{X}$.
\end{proof}
Finally, with Lemma~\ref{thm:lemma1T} and Corollary~\ref{thm:cor1T} we can now prove Theorem~\ref{thm:mainT}:

\begin{proof}[Proof of Theorem~\ref{thm:mainT}]

From \eqref{eq:FT}, we know $\hat{\u{X}}=\tilde{\u{C}} \times_1\tilde{\Gamma}\times_2 \tilde{\Psi}$ is a global minimum of $F(\u{X})$ if and only if $-\frac{1}{\lambda} \nabla_{\u{X}} \ell(\u{S},\hat{\u{X}}) \in \partial \Omega_{\theta}(\hat{\u{X}})$. Notice $-\frac{1}{\lambda} \nabla_{\u{X}} \ell(\u{S},\hat{\u{X}})$ can be written in terms of its slices, $\sum_{t=1}^T-\frac{1}{\lambda} \nabla_{X_t} \ell(S_t,\hat{X}_t)=\sum_{t=1}^T-\frac{1}{\lambda} \nabla_{X_t} \ell(S_t,\tilde{\Gamma} \tilde{C}_t \tilde{\Psi}^\top)$. To prove that $\hat{\underline{X}}=\tilde{\underline{C}} \times_1 \tilde{\Gamma} \times_2 \tilde{\Psi}$ is a global minimum and an optimal factorization of $\hat{\underline{X}}$, from Corollary~\ref{thm:cor1T}, it suffices to show two conditions:
\begin{enumerate}
\item 
$\sum_{i=1}^{r_1} \sum_{j=1}^{r_2}\sum_{t=1}^T \tilde{c}_{i,j,t}\tilde{\Gamma}^\top_i (-\frac{1}{\lambda} \nabla_{X_t} \ell(S_t,\tilde{\Gamma} \tilde{C}_t \tilde{\Psi}^\top))\tilde{\Psi}_j = \sum_{i=1}^{r_1} \sum_{j=1}^{r_2}  \theta(\tilde{\Gamma}_i,\tilde{\Psi}_j,\tilde{C}_{i,j})$, and
\label{cond1}
\item $\sum_{t=1}^T c_t\gamma^\top(-\frac{1}{\lambda} \nabla_{X_t} \ell(S_t,\tilde{\Gamma} \tilde{C}_t
\tilde{\Psi}^\top))\psi \leq \theta(\gamma,\psi,c) \ \forall (\gamma,\psi,c)$. \label{cond2}
\end{enumerate}
To show condition  \ref{cond1}, let $\Gamma_{1\pm \epsilon} = (1\pm\epsilon)^{1/3} \tilde{\Gamma}$ and $\Psi_{1\pm \epsilon} = (1\pm\epsilon)^{1/3} \tilde{\Psi}$ and $\u{C}_{1\pm \epsilon} = (1\pm\epsilon)^{1/3} \tilde{\u{C}}$. Since $(\tilde{\Gamma},\tilde{\Psi},\tilde{\u{C}})$ is a local minimum, there exists $\delta>0$ such that for all $\epsilon \in (0,\delta)$ we have
\begin{align}
&\sum_{t=1}^T\ell(S_t,\Gamma_{1\pm \epsilon}C_{t_{1\pm \epsilon}}\Psi^\top_{1\pm \epsilon}) + \lambda \sum_{i=1}^{r_1} \sum_{j=1}^{r_2} \theta((1\pm\epsilon)^{1/3} \tilde{\Gamma}_i,(1\pm\epsilon)^{1/3} \tilde{\Psi}_j,(1\pm\epsilon)^{1/3} \tilde{C}_{i,j}) \label{eq:balance1}\\
&=\sum_{t=1}^T\ell(S_t,(1\pm \epsilon)\tilde{\Gamma} \tilde{C}_t\tilde{\Psi}^\top) + \lambda(1\pm\epsilon)\sum_{i=1}^{r_1} \sum_{j=1}^{r_2}  \theta(\tilde{\Gamma}_i,\tilde{\Psi}_j,\tilde{C}_{i,j})\\
&\geq \sum_{t=1}^T\ell(S_t,\tilde{\Gamma}\tilde{C}_t\tilde{\Psi}^\top) + \lambda \sum_{i=1}^{r_1} \sum_{j=1}^{r_2}  \theta(\tilde{\Gamma}_i,\tilde{\Psi}_j,\tilde{C}_{i,j}).
\end{align}
Rearranging the last inequality gives
\begin{equation}
\frac{-1}{\lambda\epsilon} [\sum_{t=1}^T \ell(S_t,(1\pm \epsilon)\tilde{\Gamma}\tilde{C_t}\tilde{\Psi}^\top) -  \ell(S_t,\tilde{\Gamma}\tilde{C}_t\tilde{\Psi}^\top)] \leq \pm  \sum_{i=1}^{r_1} \sum_{j=1}^{r_2}  \theta(\tilde{\Gamma}_i,\tilde{\Psi}_j,\tilde{C}_{i,j}).
\end{equation}
Taking the limit as $\epsilon \searrow 0$ gives the directional derivative:
\begin{equation}
\label{eq:balance2}
\sum_{i=1}^{r_1} \sum_{j=1}^{r_2}  \theta(\tilde{\Gamma}_i,\tilde{\Psi}_j,\tilde{C}_{i,j}) \leq \sum_{t=1}^T \langle \frac{-1}{\lambda} \nabla_{X_t} \ell(S_t,\tilde{\Gamma}\tilde{C}_t\tilde{\Psi}^\top), \tilde{\Gamma}\tilde{C}_t\tilde{\Psi}^\top \rangle \leq \sum_{i=1}^{r_1} \sum_{j=1}^{r_2} \theta(\tilde{\Gamma}_i,\tilde{\Psi}_j,\tilde{C}_{i,j})
\end{equation}
which implies equality. Rearranging the inner product gives Condition~\ref{cond1}.

Next, to show condition 2, we use the assumption that there exists $(i,j)$ such that $(\tilde{\Gamma}_{i},\tilde{\Psi}_{j}) = (0,0)$ and for all $t$, $(\tilde{C}_{i,t}, \tilde{C}_{j,t}) = (0,0)$. Without loss of generality let the last column pair of $(\tilde{\Gamma},\tilde{\Psi})$ be zero and the last columns and rows of $\tilde{\u{C}}$ be zero for all $t$.  Then, given $(\gamma,\psi,c)$, let 
\begin{subequations}
\label{eq:descent_updates}
\begin{align}
\Gamma_\epsilon &= [\tilde{\Gamma}_1,\dots,\tilde{\Gamma}_{r_1-1}, \epsilon^{1/3} \gamma],\\ 
\Psi_\epsilon &= [\tilde{\Psi}_1,\dots,\tilde{\Psi}_{r_2-1}, \epsilon^{1/3} \psi], \text{and}\\
C_{t_\epsilon} &=
\begin{bmatrix}
 \tilde{\u{c}}_{1,1,t} & \dots & \tilde{\u{c}}_{1,r_2-1,t} & 0 \\
 \vdots & \ddots & \vdots  & \vdots \\
\tilde{\u{c}}_{r_1-1,1,t} & \dots &  \tilde{\u{c}}_{r_1-1,r_2-1,t} & 0 \\
0 & \dots & 0 & \epsilon^{1/3} c_t
\end{bmatrix} \ \ \forall t.
\end{align}
\end{subequations}
Now, since $\tilde{\u{C}}\times_1 \tilde{\Gamma} \times_2 \tilde{\Psi}$ is a local minimum of $F(\Gamma,\Psi,\u{C})$, there exists $\delta>0$ such that for all $\epsilon \in (0,\delta)$ we have
\begin{subequations}
\begin{align}
&\sum_{t=1}^T\ell(S_t,\Gamma_\epsilon C_{t_{\epsilon}} \Psi^\top_\epsilon) + \lambda \sum_{i=1}^{r_1} \sum_{j=1}^{r_2}\theta(\tilde{\Gamma}_i,\tilde{\Psi}_j,\tilde{C}_{i,j}) + \lambda \theta(\epsilon^{1/3}\gamma, \epsilon^{1/3}\psi,\epsilon^{1/3}c) = \\
&\sum_{t=1}^T\ell(S_t,\tilde{\Gamma}\tilde{C}_t\tilde{\Psi}^\top + \epsilon c_t \gamma \psi^\top) + \lambda \sum_{i=1}^{r_1} \sum_{j=1}^{r_2}\theta(\tilde{\Gamma}_i,\tilde{\Psi}_j,\tilde{C}_{i,j}) + \epsilon \lambda\theta(\gamma, \psi,c) \geq \\
&\sum_{t=1}^T\ell(S_t,\tilde{\Gamma}\tilde{C}_t\tilde{\Psi}^\top) + \lambda \sum_{i=1}^{r_1} \sum_{j=1}^{r_2}\theta(\tilde{\Gamma}_i,\tilde{\Psi}_j,\tilde{C}_{i,j}),
\end{align}
\end{subequations}
where the first equality follows from the positive homogeneity of $\theta$. Therefore, by rearranging the inequality we arrive at:
\begin{equation}
\frac{-1}{\lambda \epsilon} [ \sum_{t=1}^T \ell(S_t,\tilde{\Gamma}\tilde{C}_t\tilde{\Psi}^\top + \epsilon c_t \gamma \psi^\top) -  \ell(S_t,\tilde{\Gamma}\tilde{C}_t\tilde{\Psi}^\top) ] \leq \theta(\gamma,\psi,c)
\end{equation}
Since $\ell(S_t,X_t)$ is differentiable with respect to $X_t$, taking the limit as $\epsilon\searrow 0$, the directional derivative in the direction of $c_t\gamma\psi^\top$ gives us 
\begin{align}
\sum_{t=1}^T\langle \frac{-1}{\lambda} \nabla_{X_t} \ell(S_t,\tilde{\Gamma}\tilde{C}_t\tilde{\Psi}^\top ),c_t\gamma\psi^\top \rangle \leq \theta(\gamma,\psi,c)
\implies  &\sum_{t=1}^Tc_t\gamma^\top(\frac{-1}{\lambda}\nabla_{X_t} \ell(S_t,\tilde{\Gamma}\tilde{C}_t\tilde{\Psi}^\top )\psi \leq \theta(\gamma,\psi,c),
\end{align}
which proves Condition 2. This together with Condition 1 proves Theorem~\ref{thm:mainT}.
\end{proof}
In addition, Theorem \ref{thm:mainT} also shows the following immediate Corollary, which provides sufficient and necessary conditions for global optimality.
\begin{corollary}
\label{thm:corrNS}
Given a function $\ell(\u{S},\u{X})$ that is convex and once differentiable w.r.t. $\u{X}$, a rank-1 regularizer $\theta$ that satisfies the conditions in Definition~\ref{def:theta_T}, and constants $r_1, r_2 \in \mathbb{N}_+$, and $\lambda >0$, any point $(\tilde{\Gamma},\tilde{\Psi},\tilde{\underline{C}})$ is a global minimum of $f(\Gamma,\Psi,\underline{C})$ in \eqref{eq:fT} if it satisfies the following conditions:
\begin{enumerate}
    \item $\sum_{i=1}^{r_1} \sum_{j=1}^{r_2} \sum_{t=1}^T\tilde{c}_{i,j,t}\tilde{\Gamma}^\top_i (-\frac{1}{\lambda} \nabla_{X_t} \ell(S_t,\tilde{\Gamma} \tilde{C}_t \tilde{\Psi}^\top))\tilde{\Psi}_j =  \sum_{i=1}^{r_1} \sum_{j=1}^{r_2} \theta(\tilde{\Gamma}_i, \tilde{\Psi}_j, \tilde{C}_{i,j})$, and
    \item $\sum_{t=1}^T c_t \gamma^\top (-\frac{1}{\lambda} \nabla_{X_t} \ell(S_t,\tilde{\Gamma} \tilde{C}_t \tilde{\Psi}^\top)) \psi \leq \theta(\gamma,\psi,c) \ \forall(\gamma,\psi,c)$.
\end{enumerate}
Further, the first condition is always a necessary condition for global optimality, and if one additionally optimizes \eqref{eq:fT} over the number of dictionary atoms $(r_1,r_2)$, then both conditions are necessary conditions for global optimality.
\end{corollary}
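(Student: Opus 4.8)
The plan is to read the corollary off the machinery already assembled for Theorem~\ref{thm:mainT}, the key observation being that Conditions~1 and 2 are exactly the two hypotheses of Corollary~\ref{thm:cor1T} specialized to the candidate certificate $\u{W} := -\frac{1}{\lambda}\nabla_{\u{X}}\ell(\u{S},\hat{\u{X}})$, where $\hat{\u{X}} := \tilde{\u{C}}\times_1\tilde{\Gamma}\times_2\tilde{\Psi}$. For the sufficiency direction I would first unfold the pairing slicewise, $\langle\u{W},\hat{\u{X}}\rangle = \sum_t\langle W_t,\tilde{\Gamma}\tilde{C}_t\tilde{\Psi}^\top\rangle = \sum_{i,j,t}\tilde{c}_{i,j,t}\,\tilde{\Gamma}_i^\top W_t\tilde{\Psi}_j$, so that Condition~1 becomes $\langle\u{W},\hat{\u{X}}\rangle = \Theta(\tilde{\Gamma},\tilde{\Psi},\tilde{\u{C}})$ and Condition~2 is verbatim the inequality required of $\u{W}$ in Corollary~\ref{thm:cor1T}. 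That corollary then yields $\u{W}\in\partial\Omega_{\theta}(\hat{\u{X}})$ and that $\tilde{\u{C}}\times_1\tilde{\Gamma}\times_2\tilde{\Psi}$ is an optimal factorization, so $\Omega_{\theta}(\hat{\u{X}}) = \Theta(\tilde{\Gamma},\tilde{\Psi},\tilde{\u{C}})$. Because $\u{W}\in\partial\Omega_{\theta}(\hat{\u{X}})$ for this particular $\u{W}$ is precisely the first-order condition $-\frac{1}{\lambda}\nabla_{\u{X}}\ell(\u{S},\hat{\u{X}})\in\partial\Omega_{\theta}(\hat{\u{X}})$ characterizing global minimizers of the convex $F$ in \eqref{eq:FT}, $\hat{\u{X}}$ minimizes $F$; and since $F$ is a global lower bound for $f$ while $F(\hat{\u{X}}) = \ell(\u{S},\hat{\u{X}}) + \lambda\Theta(\tilde{\Gamma},\tilde{\Psi},\tilde{\u{C}}) = f(\tilde{\Gamma},\tilde{\Psi},\tilde{\u{C}})$ by optimality of the factorization, $(\tilde{\Gamma},\tilde{\Psi},\tilde{\u{C}})$ is a global minimizer of $f$.

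For the necessity of Condition~1, I would use that a global minimizer of $f$ is a fortiori a local minimizer, so that the scaling-perturbation step in the proof of Theorem~\ref{thm:mainT} applies verbatim: inserting $\Gamma_{1\pm\epsilon} = (1\pm\epsilon)^{1/3}\tilde{\Gamma}$, $\Psi_{1\pm\epsilon} = (1\pm\epsilon)^{1/3}\tilde{\Psi}$ and $\u{C}_{1\pm\epsilon} = (1\pm\epsilon)^{1/3}\tilde{\u{C}}$ into local optimality and using that $\theta$ is positively homogeneous of degree $3$ produces, in the limit $\epsilon\searrow 0$, the two-sided bound \eqref{eq:balance2}, which must therefore hold with equality; rearranging the inner product is Condition~1. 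Since this argument appeals only to local minimality, Condition~1 is unconditionally necessary.

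For the necessity of Condition~2 under the additional minimization over $(r_1,r_2)$, the plan is first to establish that optimizing the rank makes the factorized and convex problems share the same value: reorganizing the infimum defining $f$ according to $\u{X} = \u{C}\times_1\Gamma\times_2\Psi$ and invoking Definition~\ref{def:Omega_T} gives $\inf_{r_1,r_2\in\mathbb{N}_+}\inf_{\Gamma,\Psi,\u{C}} f = \inf_{\u{X}}\bigl(\ell(\u{S},\u{X})+\lambda\Omega_{\theta}(\u{X})\bigr) = \inf_{\u{X}}F(\u{X})$, item~5 of Proposition~\ref{appendix:propOmega} guaranteeing that the inner infimum is attained at finite rank. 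If $(\tilde{\Gamma},\tilde{\Psi},\tilde{\u{C}})$ achieves the global minimum of $f$ over all ranks, then $\Omega_{\theta}(\hat{\u{X}})\le\Theta(\tilde{\Gamma},\tilde{\Psi},\tilde{\u{C}})$ gives $F(\hat{\u{X}})\le f(\tilde{\Gamma},\tilde{\Psi},\tilde{\u{C}}) = \inf_{\u{X}}F$, forcing $\hat{\u{X}}$ to minimize $F$ and, comparing the two expressions, $\Omega_{\theta}(\hat{\u{X}}) = \Theta(\tilde{\Gamma},\tilde{\Psi},\tilde{\u{C}})$ as well. Hence $0\in\partial F(\hat{\u{X}})$, i.e.\ $\u{W} = -\frac{1}{\lambda}\nabla_{\u{X}}\ell(\u{S},\hat{\u{X}})\in\partial\Omega_{\theta}(\hat{\u{X}})$, and Lemma~\ref{thm:lemma1T} reads off both $\langle\u{W},\hat{\u{X}}\rangle = \Omega_{\theta}(\hat{\u{X}})$ (Condition~1 once more) and $\sum_{t} c_t\gamma^\top W_t\psi\le\theta(\gamma,\psi,c)$ for all $(\gamma,\psi,c)$, which is Condition~2.

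The corollary is essentially a repackaging of Theorem~\ref{thm:mainT}, so I do not anticipate a genuine obstacle; the one point that needs care is the necessity of Condition~2 — namely recognizing that minimizing over $(r_1,r_2)$ is exactly what forces $\inf f = \inf F$ (through attainment of the $\Omega_{\theta}$-infimum at finite rank), whereas for fixed $(r_1,r_2)$ a global minimizer need not contain a spare zero atom pair of the kind used in Theorem~\ref{thm:mainT}, so Condition~2 may then legitimately fail. That asymmetry is precisely what the closing sentence of the corollary records.
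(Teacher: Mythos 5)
Your proposal is correct and follows essentially the same route as the paper: sufficiency via Corollary~\ref{thm:cor1T} applied to the certificate $\u{W}=-\frac{1}{\lambda}\nabla_{\u{X}}\ell(\u{S},\hat{\u{X}})$ together with the lower-bound/value-matching argument from the start of the proof of Theorem~\ref{thm:mainT}, necessity of Condition~1 from the scaling perturbation (the paper states this contrapositively by ``reversing the arguments from \eqref{eq:balance2} to \eqref{eq:balance1}''), and necessity of Condition~2 from finite-rank attainment in Proposition~\ref{appendix:propOmega} forcing $\inf f=\inf F$ and then reading the subgradient condition off Lemma~\ref{thm:lemma1T}. Your write-up simply fills in details the paper leaves terse, and your closing remark on why Condition~2 can fail at fixed $(r_1,r_2)$ is an accurate gloss on the asymmetry in the statement.
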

\begin{proof}
The two conditions being sufficient is easily seen from Corollary \ref{thm:cor1T} and using identical arguments as in the beginning of the proof of Theorem \ref{thm:mainT}.  Note that by reversing the arguments from \eqref{eq:balance2} to \eqref{eq:balance1} one sees that if the first condition is not satisfied, then $(\tilde{\Gamma},\tilde{\Psi},\tilde{\underline{C}})$ is not a local minimum, so the first condition is always necessary for global optimality.  Finally, if $r_1$ and $r_2$ are also optimized over, then from the final property of $\Omega_\theta(\underline{X})$ in Proposition  \ref{appendix:propOmega} we have that the infimum can always be achieved with a finite value for $r_1$ and $r_2$.  As a result, this implies that we can always achieve the global minimum of the convex lower bound in \eqref{eq:FT}.  From this and Lemma \ref{thm:lemma1T} it is easily seen that the second condition is a necessary condition for global optimality of the convex function in \eqref{eq:FT}, which further implies that it is a necessary condition for global optimality of the non-convex function \eqref{eq:fT} if one additionally optimizes over $(r_1,r_2)$.
\end{proof}

The result of Theorem~\ref{thm:mainT} holds for any local minimum of $f$. Yet, in general, descent methods (\eg gradient descent) can only be guaranteed to converge to a stationary point at best (which may only be a saddle point) and therefore even arriving at a local minimum of $f$ may be challenging in practice. In the next section, we examine a choice of regularizer more specific to the dictionary learning problem for which we can eventually derive a more useful condition of global optimality for any $(\Gamma, \Psi, \u{C})$. 

\section{Global optimality for separable dictionary learning}
\label{sec:glob_opt_dict_learning}
We will now apply the previous analysis to the case of the rank-1 regularizer given by $\theta(\gamma,\psi,c) = ||\gamma||_2 ||\psi||_2 ||c||_1$, for which one easily verifies that the three conditions of Definition~\ref{def:theta_T} are satisfied.  Then, we have
\begin{equation}
\label{eq:ThetaT}
    \Theta(\Gamma,\Psi,\u{C}) = \sum_{i=1}^{r_1} \sum_{j=1}^{r_2} ||\Gamma_i||_2||\Psi_j||_2||C_{i,j}||_1.
\end{equation}
In that case, the tensor factorization problem of the previous section becomes:
\begin{equation}
\label{eq:SeparableDictionaryLearning_tensor_fact}
\min_{r_1,r_2,\Gamma,\Psi,\u{C}} \ell \left( \u{S}, \u{C}\times_1 \Gamma \times_2\Psi \right) + \lambda \sum_{i=1}^{r_1} \sum_{j=1}^{r_2} \|\Gamma_i\|_2 \|\Psi_j\|_2 \|C_{i,j}\|_1 .
\end{equation}
When $\ell\left(\u{S}, \u{C}\times_1 \Gamma \times_2\Psi \right) = \frac{1}{2} \|\u{C}\times_1 \Gamma \times_2\Psi - \u{S}\|_{F}^2$, this problem is simply an unconstrained reformulation of the separable dictionary learning problem of \eqref{eq:SeparableDictionaryLearning}. Yet, in contrast with state-of-the-art dictionary learning approaches, the results from the previous section will allow us to specify an explicit global optimality check for that problem. 

\subsection{Necessary and sufficient conditions for global optimality}
As a consequence of Theorem~\ref{thm:mainT} and Corollary~\ref{thm:corrNS}, for our particular choice of regularizer, the following characterization holds:
\begin{corollary}
\label{thm:corollary2T}
Let $\theta(\gamma,\psi,c) = ||\gamma||_2 ||\psi||_2 ||c||_1$. A point $(\tilde{\Gamma},\tilde{\Psi},\tilde{\u{C}})$ is a global minimum of 
\eqref{eq:SeparableDictionaryLearning_tensor_fact} if and only if it satisfies the following conditions:
\begin{enumerate}
    \item $\sum_{t=1}^T\tilde{c}_{i,j,t}\tilde{\Gamma}^\top_i (-\frac{1}{\lambda} \nabla_{X_t} \ell(S_t,\tilde{\Gamma} \tilde{C}_t \tilde{\Psi}^\top))\tilde{\Psi}_j =  ||\tilde{\Gamma}_i||_2 ||\tilde{\Psi}_j||_2 ||\tilde{C}_{i,j}||_1 \ \forall \ (i,j)$, and
    \item $\max_{1\leq t \leq T }\sigma_{max}(-\frac{1}{\lambda} \nabla_{X_t} \ell(S_t,\tilde{\Gamma} \tilde{C}_t \tilde{\Psi}^\top)) \leq 1$,
    \label{eq:cor2}
\end{enumerate}
where $\sigma_{max}(Q)$ denotes the maximum singular value of matrix $Q$. Further, the first condition is always satisfied for any point which satisfies first-order optimality w.r.t. $\underline{C}$ in \eqref{eq:SeparableDictionaryLearning_tensor_fact} -- which by trivial extension includes all first-order optimal points of \eqref{eq:SeparableDictionaryLearning_tensor_fact}.
\end{corollary}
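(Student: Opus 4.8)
The plan is to obtain Corollary~\ref{thm:corollary2T} by specializing Corollary~\ref{thm:corrNS} (which itself rests on Theorem~\ref{thm:mainT}) to the regularizer $\theta(\gamma,\psi,c)=\|\gamma\|_2\|\psi\|_2\|c\|_1$ and then rewriting its two abstract conditions in the concrete forms stated. First I would note that the minimization in \eqref{eq:SeparableDictionaryLearning_tensor_fact} is performed over $(r_1,r_2)$ as well, so by Corollary~\ref{thm:corrNS} \emph{both} of its conditions are necessary and sufficient for $(\tilde\Gamma,\tilde\Psi,\tilde{\u{C}})$ to be a global minimum. Writing $Q_t := -\tfrac1\lambda\nabla_{X_t}\ell(S_t,\tilde\Gamma\tilde C_t\tilde\Psi^\top)$, it then remains to (i) show that condition~2 of Corollary~\ref{thm:corrNS}, namely $\sum_t c_t\gamma^\top Q_t\psi\le\|\gamma\|_2\|\psi\|_2\|c\|_1$ for all $(\gamma,\psi,c)$, is equivalent to the spectral bound $\max_{1\le t\le T}\sigma_{max}(Q_t)\le1$, and (ii) show that condition~1 of Corollary~\ref{thm:corrNS}, a single equation summed over $(i,j)$, is equivalent, in the presence of (i), to the per-$(i,j)$ equalities claimed here.

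For (i) I would first use the degree-$3$ positive homogeneity of $\theta$ to reduce to $\|\gamma\|_2=\|\psi\|_2=1$, so the inequality becomes $\sum_t c_t\,\gamma^\top Q_t\psi\le\|c\|_1$ for all unit $\gamma,\psi$ and all $c\in\R^T$. Using $|\gamma^\top Q_t\psi|\le\sigma_{max}(Q_t)$ for unit vectors together with the $\ell_1$--$\ell_\infty$ H\"older bound gives $\sum_t c_t\gamma^\top Q_t\psi\le(\max_t\sigma_{max}(Q_t))\,\|c\|_1$, which is at most $\|c\|_1$ whenever $\max_t\sigma_{max}(Q_t)\le1$; conversely, if $\sigma_{max}(Q_{t^\star})>1$ for some $t^\star$, taking $\gamma,\psi$ to be leading left/right singular vectors of $Q_{t^\star}$ and $c$ the $t^\star$-th coordinate vector violates the inequality. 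This yields the equivalence in (i). For (ii), applying (i) with the particular triple $(\gamma,\psi,c)=(\tilde\Gamma_i,\tilde\Psi_j,\tilde C_{i,j})$ gives $\sum_t\tilde c_{i,j,t}\tilde\Gamma_i^\top Q_t\tilde\Psi_j\le\|\tilde\Gamma_i\|_2\|\tilde\Psi_j\|_2\|\tilde C_{i,j}\|_1$ for every $(i,j)$; since condition~1 of Corollary~\ref{thm:corrNS} asserts that the sum of the left sides equals the sum of the right sides, each inequality must be an equality, which is condition~1 here, and the reverse direction is trivial by summation.

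It then remains to justify the final sentence: that condition~1 is automatically satisfied at any point that is first-order optimal in $\u{C}$. Here I would use that the partial derivative of $\ell(\u{S},\u{C}\times_1\Gamma\times_2\Psi)=\sum_t\ell(S_t,\Gamma C_t\Psi^\top)$ with respect to the slice vector $C_{i,j}\in\R^T$ has $t$-th entry $\tilde\Gamma_i^\top\nabla_{X_t}\ell(S_t,\tilde\Gamma\tilde C_t\tilde\Psi^\top)\tilde\Psi_j$ (chain rule through $\Gamma C_t\Psi^\top=\sum_{i,j}c_{i,j,t}\Gamma_i\Psi_j^\top$). Since the regularizer is block-separable in the $C_{i,j}$, first-order optimality in $\u{C}$ means, for each $(i,j)$, $-\tfrac1\lambda\nabla_{C_{i,j}}\ell\in\|\tilde\Gamma_i\|_2\|\tilde\Psi_j\|_2\,\partial\|\tilde C_{i,j}\|_1$ (automatic when $\tilde\Gamma_i=0$ or $\tilde\Psi_j=0$). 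Pairing both sides with $\tilde C_{i,j}$ and using $\langle g,\tilde C_{i,j}\rangle=\|\tilde C_{i,j}\|_1$ for $g\in\partial\|\tilde C_{i,j}\|_1$ produces exactly $\sum_t\tilde c_{i,j,t}\tilde\Gamma_i^\top Q_t\tilde\Psi_j=\|\tilde\Gamma_i\|_2\|\tilde\Psi_j\|_2\|\tilde C_{i,j}\|_1$.

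I expect the main obstacle to be step (i): carefully verifying the two-sided equivalence between the variational inequality ``for all $(\gamma,\psi,c)$'' and the per-slice spectral-norm constraint, in particular identifying $\sup_{\|\gamma\|_2=\|\psi\|_2=1}\gamma^\top Q_t\psi$ with $\sigma_{max}(Q_t)$ and arranging the $\ell_1$/$\ell_\infty$ duality on the $c$-variable so that the sharp constant is exactly $1$ (giving the bound $\le 1$ and not merely boundedness). Everything else is bookkeeping layered on top of Corollary~\ref{thm:corrNS}.
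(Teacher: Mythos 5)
Your proposal is correct and follows essentially the same route as the paper: it specializes Corollary~\ref{thm:corrNS} (noting that $(r_1,r_2)$ is optimized over, so both of its conditions are necessary and sufficient), establishes the spectral-norm form of condition~2 by normalizing $(\gamma,\psi,c)$ and using $\ell_1$--$\ell_\infty$ duality together with $\sup_{\|\gamma\|_2=\|\psi\|_2=1}\gamma^\top Q_t\psi=\sigma_{max}(Q_t)$, and obtains the per-$(i,j)$ equalities from first-order optimality in $\u{C}$. The only cosmetic difference is that for the main equivalence you deduce the per-$(i,j)$ equalities from the summed condition of Corollary~\ref{thm:corrNS} plus the spectral bound (termwise equality in a sum of inequalities), whereas the paper derives them directly from stationarity w.r.t.\ $\u{C}$; both arguments are valid.
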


\begin{proof}
Because \eqref{eq:SeparableDictionaryLearning_tensor_fact} optimizes over the choice of $r_1$ and $r_2$, we know that the conditions in Corollary \ref{thm:corrNS} are both necessary and sufficient for global optimality, so we need to show that the two conditions given in the current statement are equivalent to those in Corollary \ref{thm:corrNS} for the particular choice of $\theta$ we make here. First, we know that to be a global minimum, a point must first satisfy first-order optimality for $f$.
Noting that $\theta(\tilde{\Gamma}_i,\tilde{\Psi}_i,\tilde{C}_{i,j}) = \|\tilde{\Gamma}_i\|_2 \|\tilde{\Psi}_j\|_2 \sum_{t=1}^{T} |\tilde{c}_{i,j,t}|$ and writing the first-order optimality conditions on the coefficients $\tilde{c}_{i,j,t}$, we obtain that:
\begin{equation}
    0 = \tilde{\Gamma}_i^\top \nabla_{X_t} \ell(S_t,\tilde{\Gamma} \tilde{C}_t \tilde{\Psi}^\top)\tilde{\Psi}_j + \lambda  \|\tilde{\Gamma}_i\|_2 \|\tilde{\Psi}_j\|_2 \ \text{sign}(\tilde{c}_{i,j,t})
\end{equation}
for all $i=1,\ldots,r_1$, $j=1,\ldots,r_2$ and $t=1,\ldots,T$ if $\tilde{c}_{i,j,t} \neq 0$. Multiplying by $\tilde{c}_{i,j,t}$ then leads, in all cases (including $\tilde{c}_{i,j,t}=0$), to:
\begin{equation}
    0 = \tilde{c}_{i,j,t} \tilde{\Gamma}_i^\top \nabla_{X_t} \ell(S_t,\tilde{\Gamma} \tilde{C}_t \tilde{\Psi}^\top)\tilde{\Psi}_j + \lambda \|\tilde{\Gamma}_i\|_2 \|\tilde{\Psi}_j\|_2 |\tilde{c}_{i,j,t}| \ \ \forall(i,j,t).
\end{equation}
Now, summing over $t$ gives for all $i,j$:
\begin{equation}
\sum_{t=1}^T \tilde{c}_{i,j,t} \tilde{\Gamma}_i^\top (-\frac{1}{\lambda} \nabla_{X_t} \ell(S_t,\tilde{\Gamma} \tilde{C}_t \tilde{\Psi}^\top))\tilde{\Psi}_j = \|\tilde{\Gamma}_i\|_2 \|\tilde{\Psi}_j\|_2 \sum_{t=1}^{T} |\tilde{c}_{i,j,t}| = \|\tilde{\Gamma}_i\|_2 \|\tilde{\Psi}_j\|_2 \|\tilde{C}_{i,j}\|_1.
\end{equation}
Therefore, all stationary points w.r.t. $\underline{C}$ must satisfy the first condition of the current statement.  Additionally, summing over all $(i,j)$ shows that all stationary points must satisfy the first condition of Corollary \ref{thm:corrNS}.


Turning to the second condition of Corollary \ref{thm:corrNS}, we need to consider the following:
\begin{equation}
\label{eq:sup1}
\sum_{t=1}^T c_t\gamma^\top(-\frac{1}{\lambda} \nabla_{X_t} \ell(S_t,\tilde{\Gamma} \tilde{C}_t
\tilde{\Psi}^\top))\psi \leq \theta(\gamma,\psi,c) \ \forall (\gamma,\psi,c).
\end{equation}
For simplicity let $W_t := -\frac{1}{\lambda} \nabla_{X_t} \ell(S_t,\tilde{\Gamma} \tilde{C}_t \tilde{\Psi}^\top)$. With our choice of $\theta$, this condition becomes:
\begin{equation}
\label{eq:globalcriteria}
\sum_{t=1}^T c_t \gamma^\top W_t\psi \leq ||\gamma||_2 ||\psi||_2 ||c||_1 \ \ \forall (\gamma,\psi,c).
\end{equation}

Note that if either $\gamma=0$, $\psi=0$, or $c=0$ then the condition is trivially satisfied, so w.l.o.g. assume none of the vectors are all zero and normalize each variable by its respective norm, such that $\hat{\gamma} = \gamma/||\gamma||_2, \hat{\psi} = \psi/||\psi||_2, \hat{c}_t = c_t/||c||_1$. Then, the previous condition becomes
\begin{equation}
\label{eq:sup3}
\sup_{\substack{\hat \gamma,\hat \psi, \hat c \\ ||\hat{\gamma}||_2=||\hat{\psi}||_2=||\hat{c}||_1=1}}\sum_{t=1}^T \hat{c}_t \hat{\gamma}^\top W_t\hat{\psi} \leq 1.
\end{equation}
Now, maximizing with respect to $\hat{c}$, note that since $||\hat{c}||_1=1$, the supremum of a linear function can be attained by choosing $\hat{c}_{t_{\ast}}=1$ and $\hat{c}_t =0$ for $t \neq t_{\ast}$ where $t_\ast \in \argmax_{t}\{ \sup_{||\hat{\gamma}||_2=||\hat{\psi}||_2=1} \hat{\gamma}^\top W_t\hat{\psi}\}$. Therefore, \eqref{eq:sup3} is equivalent to:
\begin{equation}
\label{eq:descent_directions}
    \max_{1\leq t \leq T} \{\sup_{\substack{\hat \gamma, \hat \psi \\ ||\hat{\gamma}||_2=||\hat{\psi}||_2=1}} \hat{\gamma}^\top W_t\hat{\psi}\} \leq 1,
\end{equation}
Note that the inner supremum is equivalent to finding the maximum singular value of $W_t$, so with $\sigma_{max}$ denoting the largest singular value of the corresponding matrix, this is the same as:
\begin{equation}
\label{eq:sup4}
\max_{1\leq t \leq T }\sigma_{\max}(W_t) \leq 1,
\end{equation}
which shows that condition \ref{eq:cor2} of the current statement is equivalent to the second condition in Corollary \ref{thm:corrNS}, completing the result.
\end{proof}

Using the results of Corollary~\ref{thm:corollary2T}, we can devise an algorithm to find a global minimum of the separable dictionary learning problem by first finding a stationary point of \eqref{eq:SeparableDictionaryLearning} and then checking if it satisfies condition \ref{eq:cor2} in Corollary~\ref{thm:corollary2T}. A logical next question of this routine is what happens if the stationary point does not satisfy condition \ref{eq:cor2}.  In \cite{Haeffele:PAMI2019}, the authors demonstrate that for the classical dictionary learning problem, any non-optimal stationary point can be escaped by adding a column to the dictionary $D$ and a row to the coefficient matrix $W$ where the column/row appended to $D$/$W$ is chosen to be an example that violates the form of condition \ref{eq:cor2} that arises in the matrix factorization setting. 
This is also shown here for the separable dictionary learning case.  The part of the proof that gives \eqref{eq:sup3} is essentially this argument. In other words, if \eqref{eq:sup3} is not satisfied then the $\gamma,\psi,c$ achieving the maximum in \eqref{eq:sup3} will provide a descent direction by appending them to $\tilde\Gamma$, $\tilde\Psi$ and $C$, as in \eqref{eq:descent_updates}. Therefore, the algorithm will consist of iterating between local descent and global optimality check, appending new dictionary atoms if necessary.

This approach incidentally allows to learn the dictionary size throughout the process. For separable dictionaries, we have in fact two size parameters $r_1$ and $r_2$. Therefore, in this case, one has the additional option to augment one or both of the dictionaries at the end of a local descent. Based on the application or preference of the relative dictionary sizes, we have the opportunity to schedule the increments of $r_1$ and $r_2$. 
We will formalize and study more closely such an algorithm in Section \ref{sec:algorithm}.

\subsection{Connection with low-rank tensor decomposition}
\label{ssec:connection_low_rank}
Before we delve into the algorithmic side of the proposed dictionary learning approach, there are a few more important remarks to be made on the optimization problem \eqref{eq:SeparableDictionaryLearning_tensor_fact}. In particular, we give here an alternative interpretation of this problem in terms of low-rank tensor decomposition and incidentally show a better bound for the number of dictionary elements of some global optima than the general one of Proposition \ref{appendix:propOmega}.

First, we have the following statement showing that $\Omega_\theta$ corresponds to the summation of the nuclear norms of each t-slice: 
\begin{proposition}
\label{prop:polar_fun_nucl_norm}
With $\theta(\gamma,\psi,c) = ||\gamma||_2 ||\psi||_2 ||c||_1$, we have:
\begin{equation}
    \Omega_{\theta}(\u{X}) = \sum_{t=1}^{T} \|X_t\|_{*}.
\end{equation}
\end{proposition}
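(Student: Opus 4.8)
The plan is to prove the two inequalities $\Omega_\theta(\u{X}) \ge \sum_{t=1}^T \|X_t\|_*$ and $\Omega_\theta(\u{X}) \le \sum_{t=1}^T \|X_t\|_*$ separately, relying throughout on the variational characterization of the nuclear norm recalled in Section~\ref{sec:globalMatrixFactorization}, namely $\|M\|_* = \inf\{\sum_k \|a_k\|_2 \|b_k\|_2 : M = \sum_k a_k b_k^\top\}$.

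For the lower bound I would start from an arbitrary feasible triple $(\Gamma,\Psi,\u{C})$ with $\u{C}\times_1\Gamma\times_2\Psi = \u{X}$, which is equivalent to $X_t = \sum_{i,j} c_{i,j,t}\,\Gamma_i\Psi_j^\top$ for every $t$. Writing $\|C_{i,j}\|_1 = \sum_t |c_{i,j,t}|$ and interchanging the finite sums, the objective $\sum_{i,j}\|\Gamma_i\|_2\|\Psi_j\|_2\|C_{i,j}\|_1$ becomes $\sum_t \big(\sum_{i,j} |c_{i,j,t}|\,\|\Gamma_i\|_2\|\Psi_j\|_2\big)$. For each fixed $t$, the identity $X_t = \sum_{i,j}(c_{i,j,t}\Gamma_i)\,\Psi_j^\top$ exhibits $X_t$ as a sum of rank-one matrices whose factor-norm products are exactly $|c_{i,j,t}|\,\|\Gamma_i\|_2\|\Psi_j\|_2$, so by the variational formula the inner sum is $\ge \|X_t\|_*$. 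Summing over $t$ and then taking the infimum over all feasible $(\Gamma,\Psi,\u{C})$ and over $r_1,r_2$ yields $\Omega_\theta(\u{X}) \ge \sum_t \|X_t\|_*$.

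For the matching upper bound I would exhibit an explicit factorization attaining the value. Let $X_t = \sum_{\ell=1}^{\rho_t}\sigma_{t,\ell}\,u_{t,\ell}v_{t,\ell}^\top$ be a thin SVD of $X_t$ with $\rho_t = \operatorname{rank}(X_t)$. I would index a common column set by pairs $(s,\ell)$ with $1\le s\le T$ and $1\le\ell\le\rho_s$, set $r_1 = r_2 = \sum_{s=1}^T\rho_s$, take $\Gamma_{(s,\ell)} = u_{s,\ell}$ and $\Psi_{(s,\ell)} = v_{s,\ell}$, and define $\u{C}$ by $c_{(s,\ell),(s',\ell'),t} := \sigma_{t,\ell}$ when $s=s'=t$ and $\ell=\ell'$, and $0$ otherwise. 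Then for each $t$ only the terms with $s=s'=t$, $\ell=\ell'$ survive, so $\sum_{(s,\ell),(s',\ell')} c_{(s,\ell),(s',\ell'),t}\,\Gamma_{(s,\ell)}\Psi_{(s',\ell')}^\top = \sum_{\ell=1}^{\rho_t}\sigma_{t,\ell}u_{t,\ell}v_{t,\ell}^\top = X_t$, i.e. the triple is feasible; and since every column of $\Gamma$ and $\Psi$ is a unit vector, the objective equals $\sum_{(s,\ell),(s',\ell')}\|C_{(s,\ell),(s',\ell')}\|_1 = \sum_{t=1}^T\sum_{\ell=1}^{\rho_t}\sigma_{t,\ell} = \sum_{t=1}^T\|X_t\|_*$. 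Combined with the lower bound this gives equality, shows the infimum is attained, and incidentally produces the sharper size bound $r_1,r_2 \le \sum_t\operatorname{rank}(X_t) \le T\min(G,V)$, better than the generic bound $r_1,r_2\le G\cdot V\cdot T$ of Proposition~\ref{appendix:propOmega}.

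I do not expect a genuine obstacle. The only delicate point is the index bookkeeping in the construction: one must check that the chosen support of $\u{C}$ annihilates every cross term $u_{s,\ell}v_{s',\ell'}^\top$ with $(s,\ell)\ne(s',\ell')$ or $s\ne t$, so that slice $t$ reconstructs precisely its own SVD and nothing more. (If some $X_t=0$ the corresponding block of columns is simply absent, consistent with $\|0\|_*=0$.)
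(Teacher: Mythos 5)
Your proof is correct, but it takes a genuinely different route from the paper's. The paper argues by duality: it first computes the dual norm $\mathring{\Omega}_{\theta}(\u{X}) = \sup_{\theta(\gamma,\psi,c)\leq 1}\sum_t c_t\gamma^\top X_t\psi = \max_t \sigma_{\max}(X_t)$ (reusing the same supremum calculation that appears in the proof of Corollary~\ref{thm:corollary2T}), and then invokes biduality together with the standard spectral/nuclear norm pairing to conclude $\Omega_\theta(\u{X}) = \sum_t\|X_t\|_*$. You instead give a direct primal argument: the lower bound by regrouping the objective slice by slice and applying the variational characterization of $\|\cdot\|_*$ to each $X_t$, and the upper bound by an explicit block-diagonal SVD construction. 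Your approach is more elementary — it avoids the biduality theorem and does not need $\Omega_\theta$ to be a norm a priori — and it yields two extras for free: attainment of the infimum and the sharper size bound $r_1,r_2\leq \sum_t\operatorname{rank}(X_t)$. The paper actually obtains essentially your upper-bound construction, but only afterwards (in the discussion following the proposition, for the specific least-squares loss with $X_t = \mathcal{D}_\lambda(S_t)$), whereas you fold it into the proof itself. What the paper's duality route buys is economy: the dual-norm computation is shared with the global optimality certificate of Corollary~\ref{thm:corollary2T}, making explicit that condition~\ref{eq:cor2} is exactly the statement $\mathring{\Omega}_\theta(-\frac{1}{\lambda}\nabla\ell)\leq 1$. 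Both proofs are valid; yours is self-contained and arguably cleaner as a standalone argument.
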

\begin{proof}
First, with this choice of $\theta$, Proposition \ref{appendix:propOmega} shows that $\Omega_{\theta}$ is a norm on $\R^{G \times V \times T}$. We can thus consider the dual norm, $\mathring{\Omega}_{\theta}$, defined as $\mathring{\Omega}_{\theta}(\u{X}) = \sup_{\Omega_{\theta}(\u{W})\leq 1} \langle \u{W}, \u{X} \rangle_F$. Now, for any $\u{W} \in \R^{G\times V \times T}$ such that $\Omega_{\theta}(\u{W})\leq 1$, using again Proposition \ref{appendix:propOmega}, we can write $\u{W} = \u{C} \times_1 \Gamma \times_2\Psi$ with $\Theta(\Gamma,\Psi,\u{C}) = \Omega_{\theta}(\u{W})\leq 1$ and:
\begin{equation}
    \langle \u{W} , \u{X} \rangle_F = \sum_{t=1}^{T} \sum_{i=1}^{r_1} \sum_{j=1}^{r_2} c_{i,j,t} \Gamma_i^T X_t \Psi_j,
\end{equation}
which shows that: 
\begin{equation}
   \mathring{\Omega}_{\theta}(\u{X}) = \sup_{\Gamma,\Psi,\u{C}}  \sum_{t=1}^{T} \sum_{i=1}^{r_1} \sum_{j=1}^{r_2} c_{i,j,t} \Gamma_i^T X_t \Psi_j \ \ s.t \ \ \Theta(\Gamma,\Psi,\u{C}) \leq 1.
\end{equation}
Defining $\Upsilon = \sup_{\gamma,\psi,c} \sum_{t=1}^{T} c_t \gamma^T X_t \psi$ s.t $\theta(\gamma,\psi,c) \leq 1$, it is then clear that $\Upsilon \leq \mathring{\Omega}_{\theta}(\u{X})$. Conversely, for any $\Gamma,\Psi,\u{C}$ s.t. $\Theta(\Gamma,\Psi,\u{C}) \leq 1$:
\begin{align}
    \sum_{t=1}^{T} \sum_{i=1}^{r_1} \sum_{j=1}^{r_2} c_{i,j,t} \Gamma_i^T X_t \Psi_j &= \sum_{i=1}^{r_1} \sum_{j=1}^{r_2} \sum_{t=1}^{T} c_{i,j,t} \Gamma_i^T X_t \Psi_j 
    \leq \sum_{i=1}^{r_1} \sum_{j=1}^{r_2} \theta(\Gamma_i,\Psi_j,C_{i,j}) \Upsilon 
    \leq \Theta(\Gamma,\Psi,\u{C}) \Upsilon \leq \Upsilon,
\end{align}
which leads to
\begin{equation}
    \mathring{\Omega}_{\theta}(\u{X}) = \sup_{\gamma,\psi,c} \sum_{t=1}^{T} c_t \gamma^T W_t \psi \ \ \text{s.t.} \ \ \theta(\gamma,\psi,c)\leq 1 .
\end{equation}
Following the same reasoning as in the proof of Corollary \ref{thm:corollary2T}, we obtain that:
\begin{equation}
    \mathring{\Omega}_{\theta}(\u{X}) = \max_{t=1,\ldots,T} \sigma_{max}(W_t) .
\end{equation}
We have now by biduality $\Omega_{\theta}(\u{X}) = \sup_{\mathring{\Omega}_{\theta}(\u{W}) \leq 1} \langle \u{X} , \u{W} \rangle_F$. For any $\u{W}$ such that $\mathring{\Omega}_{\theta}(\u{W})\leq 1$, i.e. $\sigma_{max}(W_t) \leq 1$ for all $t$, since $\langle \u{X} , \u{W} \rangle_F = \sum_{t=1}^{T} \langle X_t,W_t \rangle_F$, it follows from the standard expression of the nuclear norm of matrices and its dual that:
\begin{equation}
    \Omega_{\theta}(\u{X}) = \sum_{t=1}^{T} \|X_t\|_{*} = \sum_{t=1}^{T} \sum_{i=1}^{r} \sigma_i(X_t),
\end{equation}
where $r=\min(V,G)$ and $\sigma_i(X_t)$ are all the singular values of the slice $X_t$.
\end{proof}

Based on this alternative expression of the regularizer, we can rewrite the convex problem over $\u{X}$ of \eqref{eq:FT} as a slice by slice low rank regularization of the signal: 
\begin{equation}
    \min_{\u{X}} \sum_{t=1}^{T} \ell(S_t,X_t) + \lambda \|X_t\|_{*}.
\end{equation}
Thus, we have just shown that solutions of the dictionary learning problem with respect to the tensor $\u{X}$ in \eqref{eq:SeparableDictionaryLearning} are essentially obtained by solving a set of low rank matrix approximation problems. This type of problem has been studied in many past works for instance on matrix completion \cite{Candes-Recht:FCM09,Cai:SJO10}. 

In the rest of this section, let's consider the simple particular fidelity term given by $\ell(S_t,X_t) = \frac{1}{2} \|S_t-X_t\|_F^2$. It is well-known that each optimal $X_t$ is then given by the singular value shrinkage operator applied to $S_t$, i.e. (with the notations of \cite{Cai:SJO10}) for all $t=1,\ldots,T,$ we have $ X_t = \mathcal{D}_{\lambda}(S_t)$, where $\mathcal{D}_\lambda(Y) = U \mathcal{D}_\lambda(\Sigma) V^\top$ with $U\Sigma V^\top$ being the SVD of $Y$ and $\mathcal{D}_\lambda(\Sigma) = \text{Diag}([\sigma_i-\lambda]_+)$. This does not yet characterize solutions for the dictionary learning problem itself as one still needs to obtain an optimal factorization for $\u{X}$ for the regularizer $\Omega_{\theta}$. However, in that particular case, ``explicit" global minima of \eqref{eq:SeparableDictionaryLearning_tensor_fact} can be in fact constructed as follows. Writing the SVD of $S_t$ as $S_t = U_t \Sigma_t V_t^\top$, where $U_t \in O(G)$, $V_t \in O(V)$ and $\Sigma_t = \text{Diag}(\sigma_{t,1},\ldots,\sigma_{t,m})$ with $m=\min\{G,V\}$, one can set $r_1=GT, r_2 = VT$, $\Gamma = [U_1,\ldots,U_T]$, $\Psi = [V_1,\ldots V_T]$, and $\u{C} \in \R^{GT \times VT \times T}$ such that 
\begin{equation}
    C_t = \begin{bmatrix}
    \boxed{0} &        &                               &        &   \\
      & \ddots &                               &        &   \\
      &        & \boxed{\mathcal{D}_\lambda(\Sigma_t)} &        &   \\
      &        &                               & \ddots &   \\
      &        &                               &        & \boxed{0}  
    \end{bmatrix}, \qquad t=1,\dots, T.
\end{equation}
Then, by construction, $\u{X} = \u{C} \times_1 \Gamma \times_2 \Psi$ and it is a simple verification that $(\Gamma,\Psi,\u{C})$ satisfy the two optimality conditions of Corollary \ref{thm:corollary2T}. Consequently, for that particular choice of $\theta$ and $\ell$, we see that global minima in \eqref{eq:SeparableDictionaryLearning_tensor_fact} exist with $r_1 \leq GT$ and $r_2 \leq VT$. In addition, solutions can be computed based on the SVDs of the slices $S_t$ as we just described. Note also that, while $r_1=GT$ and $r_2=VT$ are upper bounds on the dictionary sizes universal to all the signals $S_t$, one can improve the compactness of the dictionaries obtained by this SVD approach in each specific case by discarding atoms which are associated to the zero diagonal elements of the shrinked matrices $\mathcal{D}_\lambda(\Sigma_t)$ in the above SVD decomposition of $S_t$. Namely, we can simply restrict to the columns $i$ of $U_t$ and $V_t$ such that $\mathcal{D}_\lambda(\Sigma_t)_{i,i} \neq 0$. Since the number of those columns is exactly the rank of $\mathcal{D}_\lambda(S_t)$, this would eventually lead to dictionaries $\tilde{\Gamma}$ and $\tilde{\Psi}$ both with $\tilde{r} = \sum_{t=1}^{T} \text{rank}(\mathcal{D}_\lambda(S_t))$ atoms.     

There are however several remaining limitations to this approach for solving the dictionary learning problem. From a numerical point of view, computing that many complete SVDs of such potentially large matrices can prove very intensive for practical applications. More importantly, although the resulting sizes of the dictionaries $\Gamma$ and $\Psi$ are smaller than the upper bound of Proposition \ref{appendix:propOmega}, those are still constructed by direct concatenation of one dictionary for each slice while enforcing a diagonality constraint on the $C_t$'s. From the perspective of dictionary learning, one is typically interested in more compact representations with a total number of atoms in $\Gamma$ and $\Psi$ that is on the order of the size of the data (i.e. with $r_1 \sim G, \ r_2 \sim V$) and independent of the number $T$ of training samples. In the following sections, we will show empirically that much more compact solutions can be found by instead introducing a more efficient algorithm that iteratively increases $r_1$ and $r_2$ until global optimality conditions are satisfied.            

\section{Algorithm for finding global minimum}
\label{sec:algorithm}
Now that Corollary \ref{thm:corollary2T} provides practical conditions to guarantee global minimality of the separable dictionary learning problem, we will outline an algorithm to reach a globally optimal solution. This involves alternating between two main sub-routines: 1) local descent to reach a stationary point with fixed number of atoms $r_1$ and $r_2$ in the dictionaries, and 2) a check for global optimality via Corollary \ref{thm:corollary2T}. Note that since we consider the particular choice of regularizer $\theta(\gamma,\psi,c) = \|\gamma\|_2 \|\psi\|_2 \|c\|_1$, the global optimality check only amounts to verifying that a stationary point satisfies condition    \ref{eq:cor2} in Corollary \ref{thm:corollary2T}. If by the end of the local descent we have not reached a globally optimal solution, then we can find a global descent direction by adding additional atoms to the dictionaries. Algorithm~\ref{alg:meta} describes this general meta-algorithm in more detail and refers to each sub-routine discussed in the following sections.
 \begin{algorithm}
     \caption{Meta-Algorithm: Local Descent and Global Optimality Check}
     \begin{algorithmic}
 \label{alg:meta}
 \STATE Initialize dictionaries with set number of atoms.
 \WHILE{not globally optimal}
 \WHILE{objective residual $> \epsilon$}
 \STATE descent to local minimum via Algorithm~\ref{alg:prox}
 \ENDWHILE
 \IF{Condition \ref{eq:cor2} is satisfied}
 \STATE solution is globally optimal 
 \ELSE
\STATE update dictionaries via Algorithm~\ref{alg:globalalgorithm} 
 \ENDIF
 \ENDWHILE
     \end{algorithmic}
   \end{algorithm}

\subsection{Proximal gradient descent to stationary point}
In this section, we provide an algorithm to find a stationary point of the separable dictionary learning problem with fixed sizes for the dictionaries. We again state the problem:
\begin{equation}
\label{eq:SepDictionaryLearningGlobal}
\min_{\Gamma,\Psi,\u{C}} \frac{1}{2}\sum_{t=1}^T||\Gamma C_t\Psi^\top - S_t||_F^2 + \lambda \sum_{i=1}^{r_1} \sum_{j=1}^{r_2} ||\Gamma_i||_2||\Psi_j||_2||C_{i,j}||_1.
\end{equation}
For an optimization problem of the form $\min_x \{\ell(x) + \lambda\Theta(x)\}$, where $\ell$ is differentiable and $\Theta$ is non-differentiable, proximal gradient descent \cite{Parikh2014} is a common algorithm to arrive at a stationary points, \ie local minima or saddle points. The general updates for proximal gradient descent follow:
\begin{equation}
    x^{k+1} = \textnormal{prox}_{\tau \lambda\Theta(\cdot)}(x^k - \tau \nabla \ell),
\end{equation}
where $\textnormal{prox}_{\tau \lambda\Theta(\cdot)}(y) = \argmin_x \{ \frac{1}{2\tau\lambda}||x-y||_2^2 + \Theta(x) \}$.  
To solve \eqref{eq:SepDictionaryLearningGlobal}, we apply a  proximal gradient descent step to each variable while holding the remaining ones constant. This local descent procedure is outlined in Algorithm~\ref{alg:prox}. Recall that $\ell(\Gamma,\Psi,\u{C}) = \frac{1}{2}\sum_{t=1}^T||\Gamma C_t\Psi^\top -S_t||_F^2$. We derive the update for each variable as:
\begin{align}
    \Gamma^{k+1}_i &= \textnormal{prox}_{\xi_i^k||\cdot||_2}( \Gamma^{k}_i - \xi_i^k [\nabla_{\Gamma^k} \ell]_i)\label{eq:updateGamma}\\
    c_{i,j,t}^{k+1} &= \textnormal{prox}_{\kappa_{i,j}^k|\cdot|}( c_{i,j,t}^{k} - \kappa_{i,j}^k [\nabla_{C_t^k} \ell]_{i,j}) \label{eq:updateCt}\\
       \Psi^{k+1}_j &= \textnormal{prox}_{\pi_j^k||\cdot||_2}( \Psi^{k}_j - \pi_j^k [\nabla_{\Psi^k} \ell]_j).\label{eq:updatePsi}
\end{align}
where the proximal operators for $||\cdot||_2$ and $|\cdot|$ can be written in closed form as:
\begin{align}
\textnormal{prox}_{\tau||\cdot||_2}(x) &= \left\{\begin{array}{lr}
        (1 - \frac{\tau}{||x||_2})x & \text{for } ||x||_2\geq \tau\\
        0 & \text{otherwise}
        \end{array},\right.\\
  \textnormal{prox}_{\tau|\cdot|}(\alpha) &= \textnormal{max}(0, \alpha -\tau) - \textnormal{max}(0,-\alpha - \tau),
\end{align}
for $x \in \mathbb{R}^N$, $\alpha \in \mathbb{R}$ and $\tau \geq 0$, and
\begin{align}
\nabla_\Gamma \ell  &= \sum_{t=1}^T(\Gamma C_t \Psi^\top - S_t) \Psi C_t^\top\\
\nabla_{C_t} \ell &= \Gamma^\top(\Gamma C_t \Psi^\top - S_t) \Psi\\
\nabla_\Psi \ell &= \sum_{t=1}^T(\Psi C_t^\top \Gamma^\top - S_t^\top) \Gamma C_t.
\end{align}
Finally, $\xi_i, \kappa_{i,j}$, and $\pi_j$ are constants composed of the other fixed variables in $\theta$, specifically
\begin{align}
\xi_i &:= \lambda \sum_{t=1}^T\sum_{j=1}^{r_2} |c_{i,j,t} | ||\Psi_j||_2/L_\Gamma\\
\kappa_{i,j} &:= \lambda||\Gamma_i||_2 ||\Psi_j||_2/L_{C_t}\\
\pi_j &:= \lambda\sum_{t=1}^T\sum_{i=1}^{r_1} |c_{i,j,t} | ||\Gamma_i||_2/L_\Psi,
\end{align}
where the parameters $1/L_\Gamma, 1/L_{C_t}$, and $1/L_\Psi$ correspond to the step sizes in the proximal gradient descent.
In general, to determine an appropriate step-size $\tau$, it has been shown that convergence is guaranteed if $\tau \leq \frac{1}{L}$, where $L$ is the Lipschitz constant of $\nabla l$:
\begin{equation}
    ||\nabla \ell(x^{(1)}) - \nabla \ell(x^{(2)})||_2 \leq L ||x^{(1)}-x^{(2)}||_2.
\end{equation}
In our setting, we can calculate (or at least bound) the Lipschitz constants with respect to, $L_\Gamma, L_{\u{C}_{t}}$, and $L_\Psi$. For example, for $L_\Gamma$ we have:
\begin{align}
|| \nabla_\Gamma \ell(\Gamma^{(1)}) - \nabla_\Gamma \ell(\Gamma^{(2)})||_F &=  ||\sum_{t=1}^T(\Gamma^{(1)} C_t \Psi^\top - S_t) \Psi C_t^\top -(\Gamma^{(2)} C_t \Psi^\top - S_t) \Psi C_t^\top||_F\\
&=  ||\sum_{t=1}^T\Gamma^{(1)} C_t \Psi^\top\Psi C_t^\top - \Gamma^{(2)} C_t \Psi^\top\Psi C_t^\top||_F\\
&= ||\sum_{t=1}^T C_t \Psi^\top\Psi C_t^\top (\Gamma^{(1)} - \Gamma^{(2)})||_F\\
 &\leq||\sum_{t=1}^T C_t \Psi^\top\Psi C_t^\top ||_F||(\Gamma^{(1)} - \Gamma^{(2)})||_F\\ 
 &= L_\Gamma||(\Gamma^{(1)} - \Gamma^{(2)})||_F 
\end{align}
where $L_\Gamma = ||\sum_{t=1}^T C_t \Psi^\top\Psi C_t^\top) ||_F$ is thus an upper bound for the Lipschitz constant of $\nabla_\Gamma \ell$.
Similarly for $\nabla_\Psi \ell$, we can take as the Lipschitz constant $L_\Psi = ||\sum_{t=1}^T C_t \Gamma^\top\Gamma C_t^\top||_F$. Then for $\nabla_{\underline{C}_t} \ell$, $L_{\underline{C}_t} = ||\Gamma^\top \Gamma||_F ||\Psi^\top \Psi||_F.$ Lastly, the convergence of the descent can be accelerated through the standard Nesterov scheme described as an extension of the Proximal Gradient Descent in Algorithm~\ref{appendix:nesterovDL} in Appendix B.
\begin{algorithm}
  \caption{Proximal Gradient Descent}
  \begin{algorithmic}
  \label{alg:prox}
  \STATE Initialize: $k=0,\Gamma^0, \Psi^0, \u{C}^0, \lambda, r_1, r_2.$
  \WHILE{error $> \epsilon$}
  \STATE Update $\Gamma^k$ via \eqref{eq:updateGamma}
  \STATE Update $\u{C}^k$ via \eqref{eq:updateCt}
  \STATE Update $\Psi^k$ via \eqref{eq:updatePsi}
  \STATE $k \rightarrow k+1$
  \ENDWHILE
  \RETURN stationary point $(\tilde{\Gamma},\tilde{\Psi},\tilde{\u{C}})$
  \end{algorithmic}
  \end{algorithm}

\subsection{Global optimality check}
Once proximal gradient descent reaches a stationary point $(\tilde{\Gamma},\tilde{\Psi},\tilde{\u{C}})$ via Algorithm~\ref{alg:prox}, we need to check if the solution is a global minimum. By the result of Corollary~\ref{thm:corollary2T}, since the first condition is always satisfied by a stationary point, we just needs to check if the second condition holds. If so, we have reached a global minimum and the algorithm stops. If not, the optimal solution $(\gamma,\psi,c)$ to the optimization problem in \eqref{eq:sup3} provides us a descent direction. Specifically, if we augment the current dictionary and coefficients $(\tilde{\Gamma},\tilde{\Psi},\tilde{\u{C}})$ by the new atoms and coefficients $(\gamma,\psi,c)$ by following the update in \eqref{eq:descent_updates} with $\epsilon$ sufficiently small, then the violation of the second condition of the corollary guarantees that the objective value at the augmented variables $(\Gamma,\Psi,\u{C})$ will be lower. The precise update of the variables is obtained as follows.
First, let $t_\ast = \argmax_t \sigma_{\textnormal{max}}(W_t)$ where $W_t := -\frac{1}{\lambda} \nabla_{X_t} \ell(S_t,\tilde{\Gamma} \tilde{C}_t \tilde{\Psi}^\top)$. Then with $(\gamma_{t_\ast},\psi_{t_\ast})$ the left and right singular vector pair corresponding to the maximum singular value of $W_t$ over all $t$, we can update the locally optimal dictionaries $\tilde{\Gamma}$ and $\tilde{\Psi}$ by appending the last column $\Gamma = [\tilde{\Gamma}, \gamma_{t_\ast}]$ and $\Psi = [\tilde{\Psi}, \psi_{t_\ast}]$.  Finally, $\u{C}$ can be updated by appending the slice corresponding to the maximum singular value by 
\begin{equation}
C_{t_\ast} = 
\begin{bmatrix}\tilde{C}_{t_\ast} & 0\\
 0 & \tau 
 \end{bmatrix}
\end{equation}
and appending a matrix of zeros for all other slices $C_t$ for $t\neq t_*$. By comparing these updates with \eqref{eq:descent_directions}, we can see that the main difference is that here we have chosen $\gamma_{t_*}$ and $\psi_{t_*}$ to be unit norm, we have chosen $c_{t_*} = 1$, and we have neglected the factor $\epsilon^{1/3}$. Instead, due to the homogeneity of the product $\Gamma_t C_t \Psi_t^\top$, we have absorbed all the scaling factors into a single variable $\tau$, which is proportional to $\epsilon$ and thus must be chosen small enough so that the objective function decreases. As a consequence, $\tau$ can be thought as a step-size for the coefficient in slice $t_*$ associated to the new atom $\gamma_{t_*} \otimes \psi_{t_*}$. The joint update gives rise to
\begin{equation}
\Gamma_{t^*} C_t \Psi_{t^*}^\top = \tilde{\Gamma}_{t^*} \tilde{C}_{t^*} \tilde{\Psi}_{t^*}^\top + \tau E_{t^*},
\end{equation}
where $\tau E_{t^*} = \tau \gamma_{t_*} \psi_{t_*}^\top$ is a descent direction for $\tau$ small enough. In our implementation, we select the optimal $\tau^*$ that leads to the largest decrease of the energy when all dictionary atoms and all other coefficients are fixed, which can be found by solving:
\begin{equation}
\label{eq:globalstep}
\tau^* = \argmin_{\tau} \frac{1}{2}||S_{t_*} - \hat{X}_{t_*} - \tau E_{t_*}||_F^2 + \lambda|\tau|,
\end{equation}
where $E_{t_\ast} = \gamma_{t_\ast}\psi_{t_\ast}^\top$. By vectorizing all tensors, \eqref{eq:globalstep} reduces to the simple proximal operator of the absolute value function given in closed-form by the soft-thresholding operator.

Now, because of the separable form of this problem, we actually have the option to update just one of the two dictionaries, and not both simultaneously during each global check. In particular, if $\gamma_{t_\ast} \in \textnormal{Span}(\tilde{\Gamma})$ then it is unnecessary to add this atom to the dictionary. Likewise for $\psi_{t_\ast}\in \textnormal{Span}(\tilde{\Psi})$. Instead of checking these conditions \textit{a posteriori}, we can check criteria akin to \eqref{eq:globalcriteria} with the added constraint that $\gamma \in \textnormal{Span}(\tilde{\Gamma})$. By definition, $\gamma \in \textnormal{Span}(\tilde{\Gamma})$ means that there exists an $\alpha$ such that $\gamma = \tilde{\Gamma}\alpha$. Using this we can make a change of variable in \eqref{eq:globalcriteria} as:
\begin{equation}
\label{eq:globalCheckGamma}
\sum_{t=1}^T c_t \alpha^\top \tilde{\Gamma}^\top W_t\psi \leq ||\tilde{\Gamma}\alpha||_2 ||\psi||_2 ||c||_1 \ \ \forall (\alpha,\psi,c).
\end{equation}
By noting that $||\tilde{\Gamma}\alpha||_2 \leq ||\tilde{\Gamma}||_2 ||\alpha||_2 = \sigma_{\max}(\tilde{\Gamma}) ||\alpha||_2$, we can check the looser criterion:
\begin{equation}
\label{eq:globalCheckGammaF}
\sum_{t=1}^T c_t \alpha^\top \tilde{\Gamma}^\top W_t\psi \leq \sigma_{\max}(\tilde{\Gamma})||\alpha||_2 ||\psi||_2 ||c||_1 \ \ \forall (\alpha,\psi,c).
\end{equation}
Therefore, if \eqref{eq:globalCheckGammaF} is violated then so is \eqref{eq:globalCheckGamma}. We prefer to check \eqref{eq:globalCheckGammaF} because of its simplicity to compute, which can be done as follows.
As before, normalizing 
$\hat{\psi} = \psi/||\psi||_2$, $\hat{c}_t = c_t/||c||_1$, and $\hat{\alpha}/||\alpha||_2$ give
\begin{align}
&\frac{1}{\sigma_{\max}(\tilde{\Gamma})}\sum_{t=1}^T \hat{c}_t\hat{\alpha}^\top \tilde{\Gamma}^\top W_t\hat{\psi} \leq 1 \ \ \forall (\hat{\alpha},\hat{\psi},\hat{c}) \nonumber\\
\iff \sup_{\hat{\alpha},\hat{\psi},\hat{c}}&\frac{1}{\sigma_{\max}(\tilde{\Gamma})}\sum_{t=1}^T \hat{c}_t \hat{\alpha}^\top\tilde{\Gamma}^\top W_t\hat{\psi} \leq 1 \ \ \textnormal{s.t.} \ \ ||\hat{\alpha}||_2=||\hat{\psi}||_2=||\hat{c}||_1=1.
\end{align}
This is equivalent to checking that 
\begin{equation}
\label{eq:globalCheckGamma2}
\max_{1\leq t \leq T }\frac{1}{\sigma_{\max}(\tilde{\Gamma})}\sigma_{\max}(\tilde{\Gamma}^\top W_t) \leq 1.
\end{equation}

  \begin{algorithm}[t]
  \caption{Global Optimality Check and Update}
  \begin{algorithmic}
  \label{alg:globalalgorithm}
  \FOR{$t=1 \dots T$}
  \STATE $\hat{X}_t = \tilde{\Gamma} \tilde{C}_t \tilde{\Psi}^\top;$
  \STATE $g_t = \sigma_{\max}(-\tilde{\Gamma}^\top(\hat{X}_t - S_t)/\lambda \sigma_{\max}(\tilde{\Gamma}));$
   \STATE $p_t = \sigma_{\max}(-(\hat{X}_t - S_t)\tilde{\Psi}/\lambda \sigma_{\max}(\tilde{\Psi}));$
      \STATE $c_t = \sigma_{\max}(-(\hat{X}_t - S_t)/\lambda);$
  \ENDFOR
  \STATE $g = \max_{t} g_t$;
    \STATE $p = \max_{t} p_t$;
  \STATE $c = \max_{t} c_t$;
  \IF{$g > 1$ and $g > p$}
    \STATE Compute global step-size $\tau$ via \eqref{eq:globalstep}
    \STATE Update $\u{C}$ and $\Psi$
  \ELSIF{$p > 1$ and $p > g$}
    \STATE Compute global step-size $\tau$ via \eqref{eq:globalstep}
    \STATE Update $\Gamma$ and $\u{C}$
  \ELSIF{$c >1$}
    \STATE Compute global step-size $\tau$ via \eqref{eq:globalstep}
    \STATE Update $\Gamma$, $\u{C}$ and $\Psi$
  \ELSE
    \STATE $\Gamma^* = \tilde{\Gamma};\u{C}^* = \tilde{\u{C}}; \Psi^* = \tilde{\Psi};$
    \STATE $\hat{S} = \hat{X};$
  \ENDIF
  \end{algorithmic}
  \end{algorithm}

If this inequality is violated, this implies that $\gamma_{t_\ast}$, the right singular vector of $\tilde{\Gamma}^\top W_t$ corresponding to the maximum singular value $\sigma_{\max}(\tilde{\Gamma}^\top W_t)$, could be appended to $\tilde{\Gamma}$ to give a global descent direction. But because $\gamma_{t_\ast} \in \textnormal{Span}(\tilde{\Gamma})$, it is not necessary to add it to find the descent direction. Therefore, we can just update $\Psi$ and $\u{C}$ as $\Psi = [\tilde{\Psi}, \psi_{t_\ast}]$ and $C_{t_\ast} = [\tilde{C}_{t_\ast}, \tau\alpha_{t_\ast}]$ and replace $\alpha_{t_\ast}$ by 0 for all other slices. The optimal step-size $\tau$ can again be found by solving \eqref{eq:globalstep} with $E_{t_\ast} = \tilde{\Gamma}\alpha_{t_\ast} \psi_{t_\ast}^\top$.

On the other hand, if \eqref{eq:globalCheckGamma2} is satisfied, we must then check the analogous condition for $\Psi$ with $\psi = \tilde{\Psi}\beta$:
\begin{equation}
\label{eq:globalCheckPsi2}
    \max_{1\leq t\leq T}\frac{1}{\sigma_{\max}(\tilde{\Psi})} \sigma_{\max}(W_t\tilde{\Psi}) \leq 1
\end{equation}
Now, if \eqref{eq:globalCheckPsi2} is violated this means we do not need to update $\Psi$ and just update $\Gamma = [\tilde{\Gamma}, \gamma_{t_\ast}]$ and $C_{t_\ast} = [\tilde{C}_{t_\ast}; \tau\beta_{t_\ast}]$ with $\beta_{t_\ast}$ replaced by 0 for all other slices. The optimal step size $\tau$ is found by \eqref{eq:globalstep} with $E_{t_\ast} = \gamma_{t_\ast}\beta_{t_\ast}^\top \tilde{\Psi}^\top$. If this too is satisfied, then we must check the original criteria \eqref{eq:cor2} to potentially update both dictionaries if violated. The order of these global checks can depend on knowledge of the intended sizes of each dictionary.  For our purposes we propose to check which of the two violates the corresponding constraint the most, \ie which one leads to the larger global step. Because \eqref{eq:globalCheckGamma2} and \eqref{eq:globalCheckPsi2} are lower bounds of \eqref{eq:cor2}, satisfying them will not be sufficient to guarantee that we have reached a global minimum and so \eqref{eq:cor2} is still necessary to check in this case. 
The complete procedure for the global optimality check and dictionary size update is outlined in Algorithm~\ref{alg:globalalgorithm}. 

The numerical cost of the overall dictionary learning algorithm highly depends on the number of iterations needed until convergence of each local descent as well as the total number of runs necessary to reach global optimality. One easily finds that each local descent iteration has a complexity of the order of $O(T(GV \tilde{N}_{\Gamma} + \tilde{N}_{\Gamma} \tilde{N}_{\Psi} V))$, where $\tilde{N}_{\Gamma}, \tilde{N}_{\Psi}$ are the number of atoms in the dictionaries at the current run. In addition, the global optimality check at the end of each run essentially amounts to the computation of $T$ maximum singular values of matrices of size $G\times V$, which can be computed more efficiently using iterative algorithms such as the power method as opposed to having to compute full SVD decompositions.


 \section{Numerical validation on synthetic data}
 \label{sec:numerics} 
In this section, we provide a few experiments on simple, small synthetic data to validate numerically the global convergence properties of the proposed method. Specifically, we consider a synthetic dataset of $T=1,200$ signals each having $V=100$ spatial pixel samples and $G=10$ angular samples. These signals are generated as follows. We start with the predefined six spatial and three angular atoms shown in Figure \ref{fig:original_atoms}, where the spatial atoms $\{\bar{\Psi}_j\}_{j=1}^6$ as displayed as as $10\times 10$ images and the angular atoms $\{\bar{\Gamma}_i\}_{i=1}^3$ are displayed as 10x1 signals,
%
%
and generate dMRI signals as $S_t = \sum_{p=1}^{m_t} \sum_{q=1}^{n_t} s_{p,q,t} \bar{\Gamma}_{i_p}^T \bar{\Psi}_{j_q} + \epsilon_t$, where $m_t$ and $n_t$ are random integers in $\{1,2\}$ and $\{1,2,3\}$ respectively, $i_p$, $i_q$ are random indices drawn uniformly in $\{1,2,3\}$ and $\{1,\ldots,6\}$ respectively, $s_{p,q,t}$ are random coefficients rescaled to verify $\sum_{p,q} s_{p,q,t} =1$ and $\epsilon_t$ is a random $10 \times 100$ Gaussian noise matrix of variance $0.003$. 
 
 \begin{figure}
    \centering
    \begin{tabular}{ccc}
    \includegraphics[width=.48\linewidth]{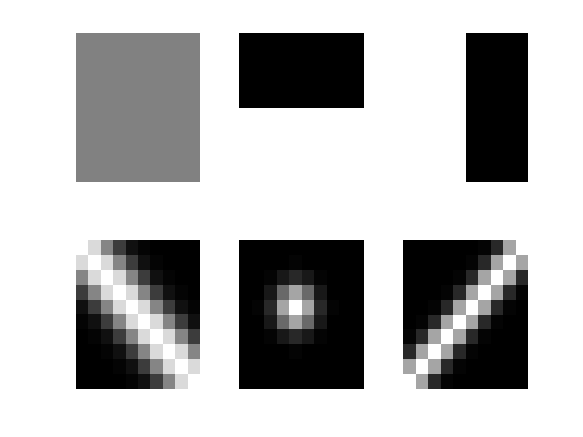} & &
    \includegraphics[width=.35\linewidth,height=0.35\linewidth]{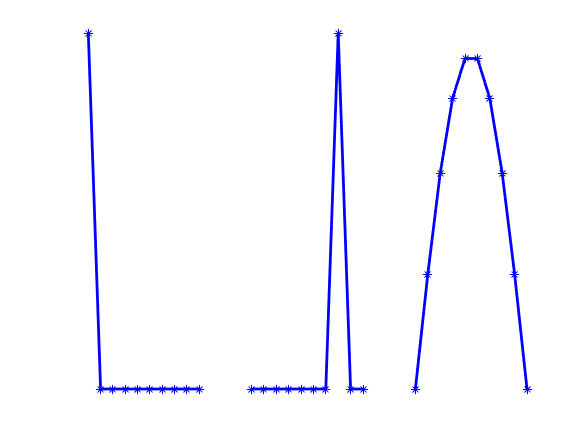}\\
    $\Psi_j$ & & $\Gamma_i$
    \end{tabular}
    \caption{Original set of six spatial dictionary atoms ($\bar{\Psi}_j$, left) and three angular dictionary atoms ($\bar{\Gamma}_i$, right) used in our synthetic experiment. A random combination of a subset of these atoms are used to generate synthetic $10 \times 100$ signals.}
    \label{fig:original_atoms}
\end{figure}

 \begin{figure}
    \centering
    \begin{tabular}{ccc}
    \includegraphics[width=.46\linewidth]{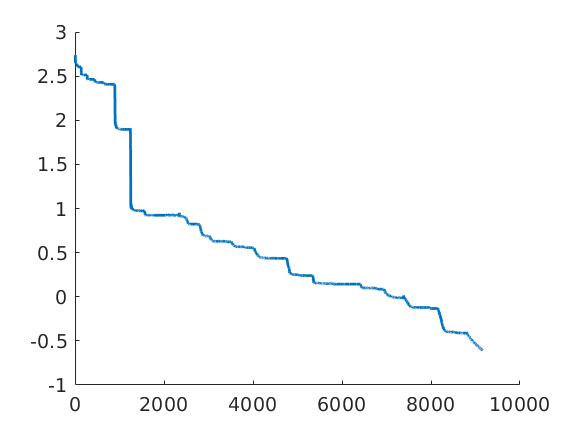} & & \includegraphics[width=.46\linewidth]{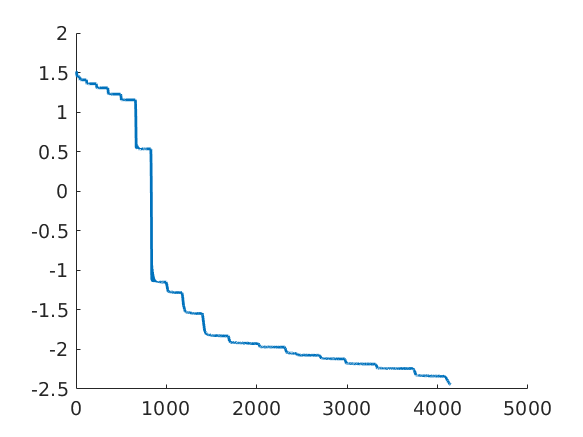} \\
    $\lambda=0.75$: $r_1=75, r_2 = 98, \text{sp}=0.5\%, \tilde{r}=1002$ & & $\lambda=0.85$: $r_1=32, r_2 = 44, \text{sp}=0.34\%,\tilde{r}=702$ \\
    \includegraphics[width=.46\linewidth]{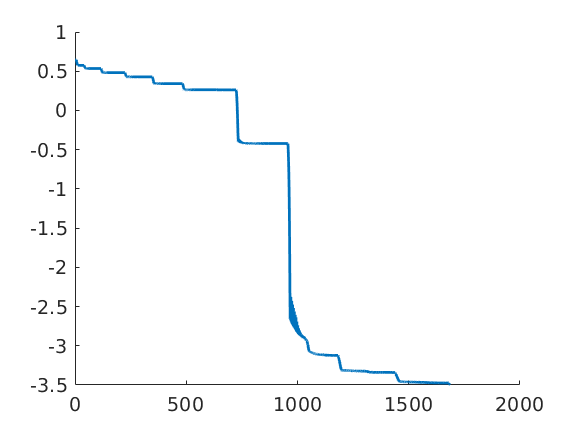} & & \includegraphics[width=.46\linewidth]{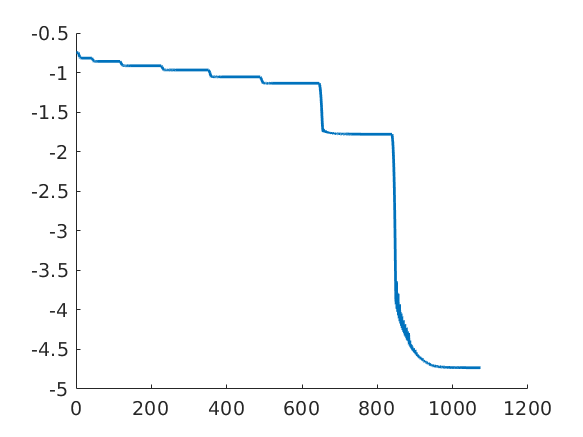} \\
    $\lambda=0.9$: $r_1=7, r_2 = 12, \text{sp}=0.59\%,\tilde{r}=460$ & & $\lambda=0.95$: $r_1=3, r_2 = 8, \text{sp}=1.12\%,\tilde{r}=369$
    \end{tabular}
    \caption{Evolution of the difference (in logarithmic scale) between the objective value computed by our algorithm and the globally optimal value computed by the slice by slice SVD method as a function of the number of iterations of our algorithm for four different values of $\lambda$. Also provided are the number of atoms $r_1$, $r_2$ of the estimated dictionaries with the overall sparsity of the coefficient tensor $\u{C}$, as well as the size $\tilde{r}=\tilde{r}_1=\tilde{r}_2$ of the dictionaries obtained by the direct slice SVD approach.}
    \label{fig:synth_energy_curves}
\end{figure}

By following the direct slice by slice SVD approach described in Section \ref{ssec:connection_low_rank}, we can compute the groundtruth global minimum value for problem \eqref{eq:SepDictionaryLearningGlobal}. The corresponding optimal dictionaries $\Gamma$ and $\Psi$ are, however, very redundant since they both contain $\tilde{r} = \sum_{t=1}^{T} \text{rank}(\mathcal{D}_\lambda(S_t))$ atoms, where the values of $\tilde r$ for the different choices of $\lambda$ are given in Figure \ref{fig:synth_energy_curves}. We expect that more compact near optimal solutions can be recovered by our proposed separable dictionary learning algorithm. We thus solve \eqref{eq:SepDictionaryLearningGlobal} for different choices of the regularization parameter $\lambda$ using Algorithm \ref{alg:meta}, where $\Gamma$ and $\Psi$ are both initialized with a single random atom. We stop our algorithm when the global optimality certificate is satisfied up to a small error.

The gap between the objective value computed by our algorithm and the globally optimal value at each iteration, as well as the final number of dictionary atoms, are shown in Figure \ref{fig:synth_energy_curves}. There are several remarks to be made on those results. First, we see that in all cases the gap reduces to a value that is close to zero.
Second, we verified that each sudden reduction of the gap corresponds to the end of a local descent and the addition of a new atom to either $\Gamma$, $\Psi$ or both. Third, we observed that this iterative increase of dictionary sizes provides in general better initializations for each new local descent and thus leads to critical points much closer to the global minimum than a single local descent with the last values of $r_1$ and $r_2$. In particular, we noted that running local descent initialized with random dictionaries of the `optimal' sizes $r_1$, $r_2$ estimated by our algorithm leads to a critical point that is usually very far from a global minimum. Fourth, we see that convergence of our method may take in general many more iterations for lower values of $\lambda$, which also leads to a higher number of atoms. On the other hand, for values of $\lambda$ larger than $1$ we find that the optimal solutions always reduce to $\Gamma=0$, $\Psi=0$ and $\u{C}=0$.
Fifth, note that our method leads to much smaller values for $r_1$ and $r_2$ than the direct slice by slice SVD approach and also consistently estimates more atoms for $\Psi$ than for $\Gamma$, which is expected given the higher dimensionality of the spatial component for these signals and the higher number of original spatial atoms used to generate the dataset.

 \begin{figure}
    \centering
    \begin{tabular}{ccc}
    \includegraphics[width=.52\linewidth]{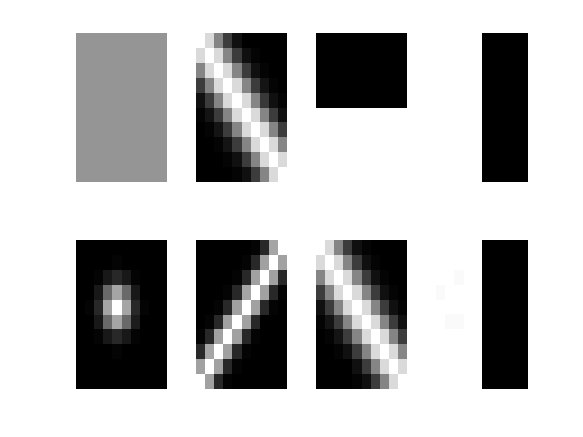} & & \includegraphics[width=.40\linewidth,height=0.35\linewidth]{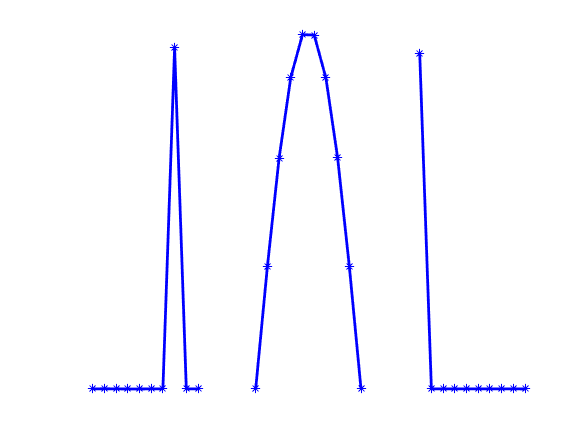} \\
    $\Psi_j$ & & $\Gamma_i$
    \end{tabular}
    \caption{With $\lambda=0.95$, the original dictionary atoms are recovered (with some redundancy).}
    \label{fig:synth_estimated_atoms_lam095}
\end{figure}

It is also very informative to visualize the different dictionary atoms 
recovered by our algorithm. Figure \ref{fig:synth_estimated_atoms_lam095} shows the atoms found for $\lambda =0.95$. We can see that the three atoms in $\Gamma$ are very close to the original ones used to create the data, while $\Psi$ contains the original spatial atoms of Figure \ref{fig:original_atoms}, plus an additional near replica of the third atom. For lower values of $\lambda$, we observe that dictionaries typically contain additional spurious duplicates as well as combinations of those original atoms, some of which are shown in Figure \ref{fig:synth_estimated_atoms_lam075}.

 \begin{figure}
    \centering
    \begin{tabular}{ccc}
    \includegraphics[width=.48\linewidth]{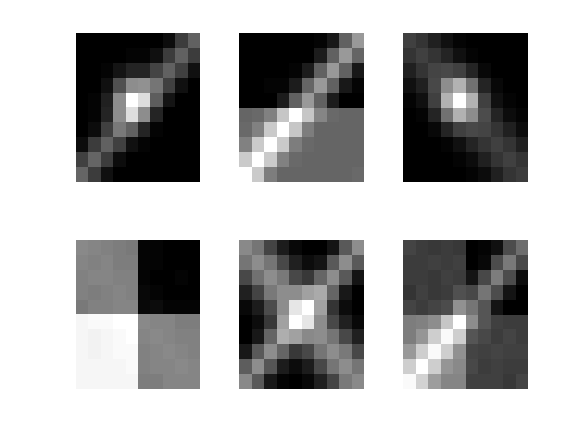} & & \includegraphics[width=.37\linewidth,height=0.33\linewidth]{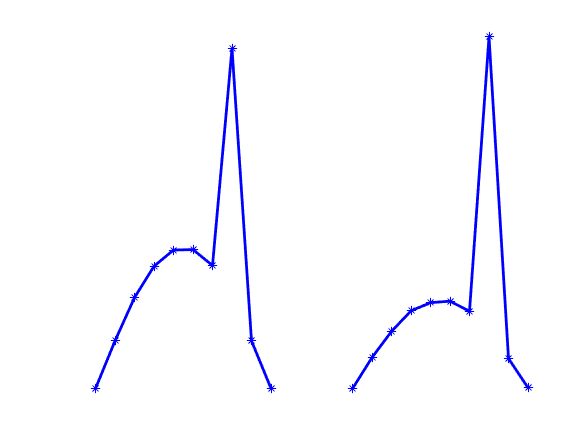}\\
    $\Psi_j$ & & $\Gamma_i$
    \end{tabular}
    \caption{With $\lambda=0.75$, some more complex spatial and angular atoms are obtained.}
    \label{fig:synth_estimated_atoms_lam075}
\end{figure}

\section{Application to diffusion magnetic resonance imaging}
 \label{sec:dMRIbackground}

In this section, we demonstrate the applicability of the proposed separable dictionary learning algorithm to the analysis of diffusion magnetic resonance imaging (dMRI) data. We first summarize the basics of dMRI reconstruction and review the literature of dictionary learning applied in this field. We then show how our method can be used to learn dictionaries for dMRI, and demonstrate the performance of such dictionaries in the task of signal denoising.

\subsection{Basics of dMRI}
\label{sec:introdMRI}

Diffusion magnetic resonance imaging (dMRI) is a medical imaging modality that can be used to study the structure of the complex network of neurons in the brain $\textit{in vivo}$  \cite{Tuch:Neuron03}. At each voxel, dMRI measures the diffusion of water molecules along multiple orientations in 3D space. Since the direction of maximum diffusivity is correlated with the direction of axons in the brain, dMRI can be used to estimate the local orientation of neuronal fiber bundles and reconstruct a network of fiber tract connections. Such networks allow researchers to study, \eg anatomical brain variations that are essential for understanding and predicting neurological disorders such as Alzheimer's disease or traumatic brain injury. 

The basics of dMRI can be summarized as follows. First, a diffusion signal $s_v\in\mathbb{R}^G$ is measured at each voxel $v=1,\dots,V$ in a 3D brain volume, resulting in a spatial-angular signal $S$ of total dimension $G\times V$ \cite{Tuch:Neuron03}. Diffusion is commonly measured angularly on a unit sphere of the diffusion domain known as $q$-space and therefore diffusion signals are commonly represented by an angular or spherical dictionary, $\Gamma \in \mathbb{R}^{G\times r_1}$, \eg spherical harmonics, spherical wavelets, as $S \approx \Gamma W$. Then, from these diffusion signals, one can estimate the orientation distribution function (ODF), which measures the probability of having a fiber bundle oriented in a particular direction at each voxel \cite{Aganj:MRM10}. The ODF is often the starting point of tractography algorithms that exploit this directional information to extract fiber bundles \cite{Fillard:Neuroimage11}. Specifically, the direction of of maximum diffusion is used as an estimate of local orientation, and fiber tracts are obtained by following these estimates of local orientation.  In this paper, we are focused primarily in the first step of dMRI analysis, which is the reconstruction of the image volumes from the dMRI signals. That said, 
for convenience in the visualization of the results, we shall display the reconstructed ODFs in our figures, which are computed using a standard method involving spherical harmonics and the Funk-Radon transform \cite{Goh:Neuroimage11}. Each ODF is a spherical probability distribution where yellow are high values and blue are low values in the visualization.


 \subsection{Dictionary learning for dMRI}
 As discussed above, dMRI reconstruction methods often use a fixed dictionary $\Gamma$. In practice, in applications such as denoising and compressed sensing, better representations can be obtained by learning a dictionary directly from dMRI data.
Most existing dictionary learning methods for dMRI are based on solving the classical dictionary learning problem \eqref{eq:DictionaryLearning}:
\begin{equation}
\label{eq:AngularDictionaryLearning}
\min_{\Gamma,W} \  \frac{1}{2} ||\Gamma W - Y||_F^2 + \lambda ||W||_1 \ \ \textnormal{s.t.} \ \ ||\Gamma_i||_2 \leq 1 \ \ \textnormal{for} \ \ i=1\dots r_1,
\end{equation}
where $Y = [y_1,\dots,y_T] \in \mathbb{R}^{G \times T}$ are $T$ training examples of angular signals taken, for example, from a subset of representative voxels in a brain image, and $W\in \mathbb{R}^{r_1 \times T}$ are the associated angular coefficients.
There have been a multitude of works that aim to solve \eqref{eq:AngularDictionaryLearning} or some alternative versions by proposing different models like parametric dictionary learning \cite{Aranda:MIA15,Cheng:MICCAI13,Cheng:MICCAI15,Merlet:MICCAI12,Merlet:MIA13,Ye:ISBI12}, which learn parameters of predefined diffusion models, Bayesian learning \cite{Gupta:IPMI17,Pisharady:Neuroimage17}, manifold learning \cite{Sun:IPMI13} and dictionary learning directly from undersampled data for compressed sensing \cite{Bilgic:MRM12,Gramfort:MIA14,Gupta:CDMRI16,Gupta:IPMI17,Mcclymont:MRM15}.
While some of these methods impose additional spatial coherence between neighboring voxels \cite{Ye:ISBI12}, training examples $y_t$ are usually taken voxel-wise without considering their spatial correlations. In other words, these works have not considered learning joint spatial-angular dictionaries in the context of dMRI. The work of \cite{st2016non} considers both the spatial and angular components in dictionary learning applied to dMRI denoising, but restrict their method to both spatial and angular patches and solve the classical dictionary learning problem \eqref{eq:DictionaryLearning} after vectorizing the spatial-angular diffusion signal.

To incorporate the spatial domain, we can compile the signals $s_v$ at each voxel of the brain volume into the matrix $S=[s_1,\dots,s_V]\in\mathbb{R}^{G\times V}$.  Then, given an angular dictionary $\Gamma \in \mathbb{R}^{G\times r_1}$ and a spatial dictionary $\Psi \in \mathbb{R}^{V\times r_1}$, the entire dMRI image may be represented (or approximated) as
\begin{equation}
\label{eq:SpatialAngularMatrixForm}
S = \Gamma C \Psi^\top,
\end{equation}
where $C\in \mathbb{R}^{r_1 \times r_2}$ stores the coefficients in the joint spatial-angular dictionary.
This separable spatial-angular representation of dMRI data fits directly into our separable dictionary learning framework \eqref{eq:SeparableDictionaryLearning} with $T$ training example diffusion volumes $\{S_t\}_{t=1}^T \in \mathbb{R}^{G\times V}$.

In prior work \cite{Schwab:MICCAI16,Schwab:MIA18} we demonstrated that sparse coding with respect to fixed separable dictionaries over the spatial and angular domain provides sparser reconstructions than the traditional angular sparse coding. Therefore, by learning both spatial and angular dictionaries directly from dMRI data, we should be able to provide even sparser reconstructions than the state of the art. Our related work also support the use of analytic spatial-angular dictionaries in order to lower subsampling rates for dMRI compressed sensing \cite{Schwab:CDMRI17}.  In the future work, we also hope to improve such results through spatial-angular dictionaries learned from the data.

\subsection{Patch-based training for dMRI}
\label{DL:training}

In theory, our spatial-angular dictionary learning method is capable of learning global spatial and angular dictionaries, $\Psi \in \mathbb{R}^{G\times r_1}$ and $\Gamma \in \mathbb{R}^{V\times r_2}$, over an entire dMRI dataset of size $G\times V$.  However, the typical size of a HARDI brain volume is on the order of $V=100^3$ voxels, and $G=100$ diffusion measurements, \ie of size $G\times V = 10^8$. Furthermore, the number of training examples $T$ depends on the size of the training sample. This would require a very large number of training examples of entire dMRI datasets, which is largely infeasible for our algorithm. Because the spatial domain is orders of magnitude larger than the angular domain, one way to curb the computational burden 
is to reduce our dictionary learning to local spatial patches for all diffusion measurements (\ie 3D patches of size $P\times P \times P$).
Patch-based methods are indeed very popular in image processing for tasks such as denoising, filtering, inpainting, and object detection \cite{Yellin:ISBI17}. In addition, local dictionaries are beneficial for capturing local features that are often repeated in an image, such as edges, textures or objects. Note that another possible approach could consist in learning separable dictionaries in the different axes $x,y,z$ of the spatial domain as well and thus involve separable dictionaries with a larger number of factors.   
\begin{figure}
    \centering
    \includegraphics[width=.66\linewidth]{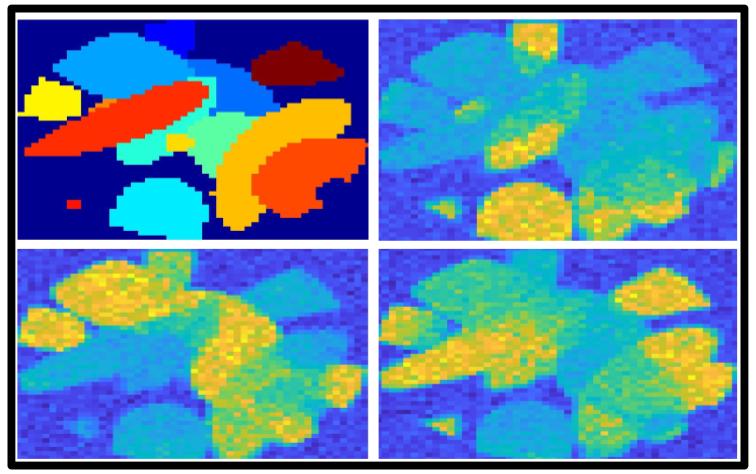}
    \includegraphics[width=.69\linewidth]{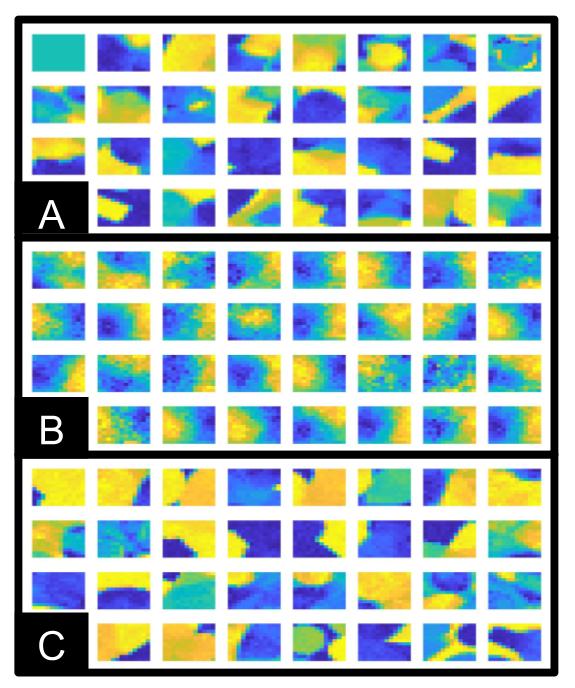}
    \caption{Top: Phantom HARDI ground truth fiber segmentations (top left, each color represents a different segmentation) and three diffusion weighted images (signal color coding: yellow positive, green negative, blue zero) used for training on patches of size $12\times 12$. Bottom: Subset of spatial patch dictionaries learned via A. KSVD independently from angular dictionary, B. KDRSDL jointly with angular dictionary, C. the proposed method jointly with angular dictionary. B. appears to have reached a spurious local minimum while A. and C. closely resemble each other and pick up sharp edges and shapes present in the training phantom. (Spatial dictionary color coding: yellow positive, blue negative, green zero.)}
    \label{fig:SpatialDictionaries}
\end{figure}


\begin{figure}
    \centering
    \includegraphics[width=.7\linewidth]{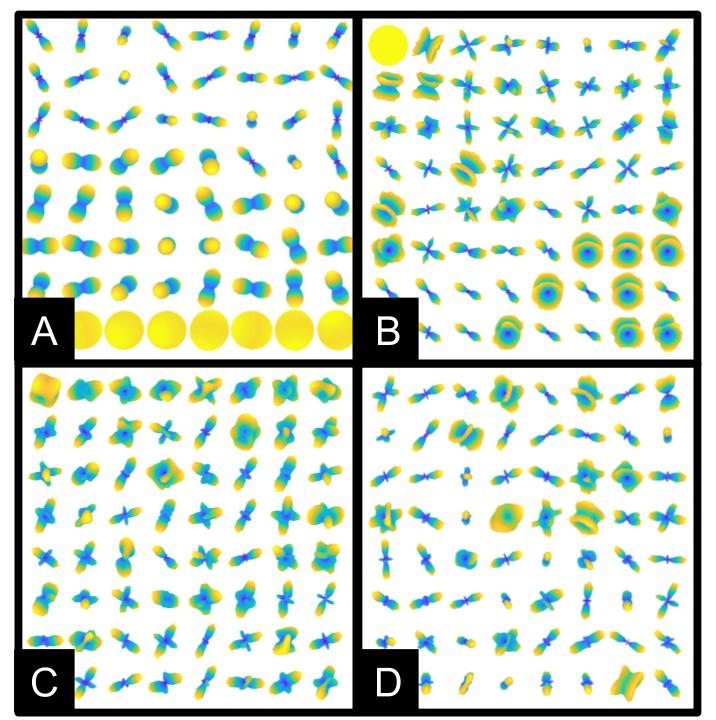}
    \caption{Comparison of angular dictionaries. $A.$ Fixed spherical ridglets. $B.-D.$ Subset of angular dictionaries trained on the phantom HARDI data learned via B. KSVD  independently from spatial dictionary, $C.$ KDRSDL jointly with spatial dictionary, and $D.$ the proposed method jointly with spatial dictionary. The learned dictionaries provide more complex fiber crossing configurations than the fixed spherical ridglets. (Each ODF is a spherical probability distribution where yellow are high values and blue are low values).}
    \label{fig:AngularDictionaries}
\end{figure}

\begin{figure}
    \centering
    \includegraphics[width=1\linewidth]{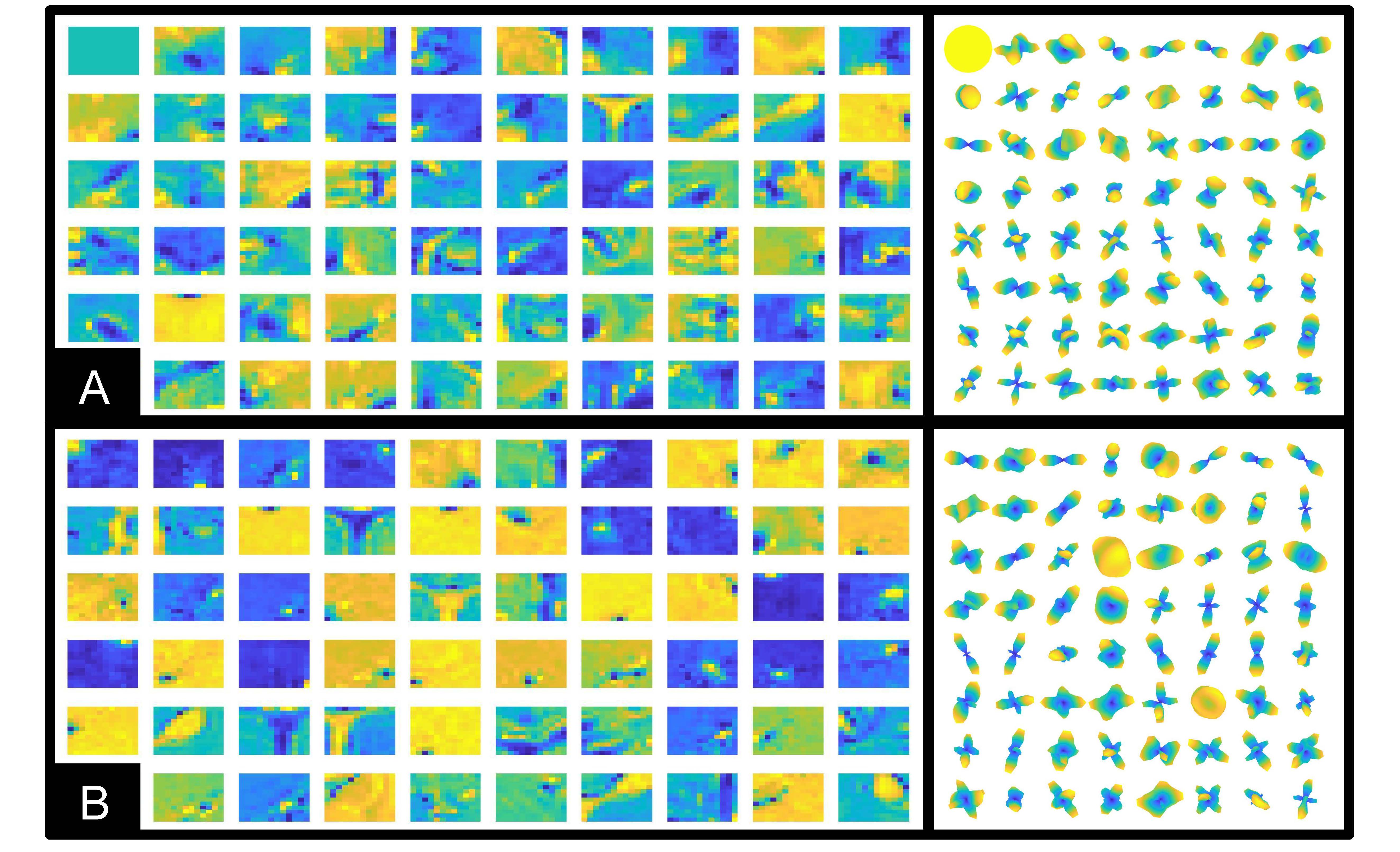}
    \caption{$A.$ Spatial and angular dictionaries learned independently via KSVD. Each are sorted (left to right, top to bottom) by their individual frequencies of use in modeling the training data. $B.$ Spatial and angular dictionaries learned jointly by the proposed method. Each are sorted (left to right, top to bottom), by their joint frequencies. For example, the top left spatial and angular atoms are together the most frequently used joint spatial-angular atom.}
    \label{fig:spatialangulardictionariesReal}
\end{figure}
For training we thus choose a random selection of spatial patches that is consistent along the diffusion domain. For computational simplicity and purposes of visualization, we limit our experiments in this paper to 2D spatial patches of size $P\times P$ instead of 3D,  \ie $S_t \in \mathbb{R}^{G\times P^2}$. We acknowledge that extending to 3D would provide a further reduction in spatial redundancies during sparse coding and the ideal case would be to learn a dictionary for the entire 3D volume.
Depending on the detail and size of an image, popular patch sizes range from $P=5$ to $15$. For our data, $P=12$ gives a good amount of detail and is not too large to process.

For choosing the number of training examples, $T$, we can consider the number of training examples typical of angular dictionary learning.  For instance, the work of \cite{Merlet:MICCAI12} use $5,000$ angular signals to train their angular dictionary. To reach this number, we only need a relatively small number of $G\times P\times P$ patches to provide an adequate number of spatial and angular training examples, respectively.
In total, the number of angular training examples will be $P^2T$ and the number of spatial training examples will be $GT$. For a typical dMRI dataset with $G=100$ and $P=12$, we will need on the order of $T=40$ training patches, to have around $5,000$ angular training examples and around $4,000$ spatial training examples. In this work, we use $T=100$ training patches over multiple image slices, resulting in about $14,400$ angular training examples and $10,000$ spatial training examples.

\subsection{Denoising experiment}


For our application we learn our dictionaries from high angular resolution diffusion imaging (HARDI) data \cite{Descoteaux:TMI09}. Specifically, we experimented on a phantom and a real HARDI brain dataset. The phantom is taken from the ISBI 2013 HARDI Reconstruction Challenge\footnote{http://hardi.epfl.ch/static/events/2013\_ISBI/},
a $V\!=\!50\!\times\!50\!\times\!50$ volume consisting of 20 phantom fibers crossing intricately within an inscribed sphere, measured with $G\!=\!64$ diffusion measurements. Our initial experiments test on a 2D $50\!\times\!50$ slice of this data for simplification. We use patches of size $P=12$, \ie $12 \times 12$.

The phantom dataset includes two noise levels: a low noise level of SNR=30 dB and a high noise level of SNR=10 dB. The denoising task will be to denoise the SNR=10 dB data using dictionaries learned from the SNR=30 dB data and record the error with respect to the ``ground truth'' SNR=30 dB data by calculating Peak SNR (PSNR):
\begin{equation}
    PSNR = 10\log_{10}\frac{MAX_I^2}{MSE},
\end{equation}
where $MAX_I$ indicates the maximum value in the original SNR=30 dB signal, and MSE is the mean squared error between the original SNR=30 dB signal and the reconstruction. The higher the PSNR, the more accurate the reconstruction will be. We chose a subset of slices of the SNR=30 dB to learn our 2D spatial-angular dictionaries and used a selection of the remaining slices as test data for denoising.  

After validation on phantom data, we show qualitative denoising results on a real HARDI volume with $G=127$ diffusion measurements using our proposed spatial-angular dictionaries learned on a subset of 2D slices with patches of size $12 \times 12$.


 
 \subsection{Methods}
 \label{sec:methods}
 
We validate the proposed separable dictionary learning method by showing its performance on denoising HARDI data. While there are numerous denoising methodologies in the literature, we will focus on utilizing learned dictionaries in a sparse denoising method, which has been used frequently in the dMRI literature \cite{Gramfort:MIA14}. Note that our aim here is primarily to evaluate and compare different dictionary learning strategies through sparse denoising experiments but not to compare those results with the most advanced denoising algorithms in the field which typically involve additional processing steps \cite{st2016non}.       
\begin{table}[t]
\centering
\footnotesize{
\begin{tabular}{|c|c|>{\columncolor[gray]{0.9}}c|c|>{\columncolor[gray]{0.9}}c|c|>{\columncolor[gray]{0.9}}c|c|>{\columncolor[gray]{0.9}}c|}
\hline
Comparison & \multicolumn{2}{  c| }{1} & \multicolumn{2}{  c| }{2} & \multicolumn{2}{  c| }{3} & \multicolumn{2}{  c| }{4}\\ \hline
Method & Angular & Spatial-Angular & Fixed & Learned & Separate & Joint & Local & Global\\ \hline
\cellcolor{blue!15} I-SR & \checkmark & & \checkmark & & &  & & \\ \hline
\cellcolor{blue!15} I-SR + TV & \checkmark & & \checkmark & & &  & & \\ \hline
 \cellcolor{blue!15} Curve-SR & & \checkmark & \checkmark & & & &  & \\ \hline
    \cellcolor{red!15} I-KSVD & \checkmark & & & \checkmark & \checkmark &  & \checkmark &  \\ \hline
    \cellcolor{red!15} KSVD-KSVD & & \checkmark & & \checkmark & \checkmark & & \checkmark & \\ \hline
    \cellcolor{green!15} KDRSDL & & \checkmark & & \checkmark & & \checkmark & \checkmark & \\ \hline
    \cellcolor{green!15} Proposed & & \checkmark & & \checkmark & & \checkmark & & \checkmark \\ \hline
    \end{tabular}}
     \caption{Checklist of properties for each dictionary type to compare each method. Purple indicates fixed dictionaries, pink indicates spatial and/or angular dictionaries learned independently, and green indicates a joint spatial-angular dictionary.}
     \label{table:checklist}
\end{table}

\begin{table}[t]
\center
\begin{tabular}{cc|c|c|c|c|}
\cline{3-6}
& & \multicolumn{4}{ c| }{Angular} \\ \cline{3-6}
& & \multicolumn{1}{ c| }{\cellcolor{yellow!15} SR} & \multicolumn{1}{  c| }{\cellcolor{yellow!15} KSVD} & \multicolumn{1}{ c| }{\cellcolor{yellow!15} KDRSDL} & \multicolumn{1}{ c| }{\cellcolor{yellow!15} Proposed} \\ \cline{1-6}
\multicolumn{1}{ |c  }{\multirow{5}{*}{Spatial}}&
\multicolumn{1}{ |c| }{\cellcolor{yellow!15} I} &\cellcolor{blue!15} I-SR (+ TV) &\cellcolor{red!15} I-KSVD & & \\ 
\cline{2-6}
\multicolumn{1}{ | }{}  &
\multicolumn{1}{ |c| }{\cellcolor{yellow!15} Curve} & \cellcolor{blue!15}Curve-SR & \cellcolor{red!15}Curve-KSVD & &   \\ 
\cline{2-6}
\multicolumn{1}{ |  }{}  & \multicolumn{1}{ |c| }{\cellcolor{yellow!15} KSVD} & \cellcolor{red!15}KSVD-SR & \cellcolor{red!15}KSVD-KSVD & &   \\
\cline{2-6}
\multicolumn{1}{ |  }{}  & \multicolumn{1}{ |c| }{\cellcolor{yellow!15} KDRSDL} & & & \cellcolor{green!15} KDRSDL &   \\
\cline{2-6}
\multicolumn{1}{ |  }{}  & \multicolumn{1}{ |c| }{\cellcolor{yellow!15} Proposed} & & & & \cellcolor{green!15} Proposed  \\
\cline{1-6}
\end{tabular}
\caption{Organization of spatial and angular dictionaries. Purple indicates fixed dictionaries, pink indicates spatial and/or angular dictionaries learned independently, and green indicates a joint spatial-angular dictionary.}
\label{table:spatialangular}
\end{table}
For our learned spatial and angular dictionaries we use the spatial-angular sparse coding approach proposed in \cite{Schwab:MICCAI16,Schwab:MIA18} which amounts in solving 
\eqref{eq:SeparableDictionaryLearning} only for $C$ with $T=1$. For spatial patch-based dictionaries, we will apply sparse coding for each patch and average the results across overlapping patches. For denoising, we choose a value of $\lambda$, consistent for all patches, that gives the highest PSNR.

To validate the results of our proposed separable dictionary learning method we consider four dictionary comparisons based on the denoising performance:\\

\begin{enumerate}

    \item \textit{\textbf{Angular vs. Spatial-Angular:} will the proposed spatial-angular framework for dictionary learning and sparse coding outperform state-of-the-art framework for angular dictionary learning and sparse coding for denoising?}\label{comp:spatialangular_Vs_angular}
    \item \textit{\textbf{Fixed vs. Learned:} will dictionaries learned from dMRI data outperform fixed analytic dictionaries for denoising?}\label{comp:learned_Vs_fixed}
    \item \textit{\textbf{Separate vs. Joint:} will learning spatial and angular dictionaries $\textit{jointly}$ via separable dictionary learning better represent dMRI data than learning spatial and angular dictionaries independently each by classical methods like KSVD?}
    \label{comp:joint_Vs_separate}
    \item \textit{\textbf{Local vs. Global:} will our globally optimal separable dictionary learning outperform other locally optimal separable dictionary learning methods?}\label{comp:global_Vs_local}\\
\end{enumerate}


For comparison~\ref{comp:spatialangular_Vs_angular}, we will compare against state-of-the-art angular dictionary learning and sparse coding frameworks. In particular, we will solve the angular dictionary learning problem \eqref{eq:AngularDictionaryLearning} with the commonly used KSVD algorithm \cite{Elad:TSP06}. For the angular sparse denoising step, we also add a spatial regularization term based on the total-variation (TV) in the spatial domain, as is commonly done in state-of-the-art dMRI denoising \cite{bao2008sparse}.

For comparison~\ref{comp:learned_Vs_fixed}, we will compare against two fixed angular and spatial dictionaries used in the dMRI literature:  the spherical ridgelet (SR) dictionary popularly used in angular sparse coding and compressed sensing for dMRI \cite{TristanVega:MICCAI11,Michailovich:TMI11,Michailovich:TIP10,Michailovich:MICCAI10} (see Figure~\ref{fig:AngularDictionaries} $A$ for visualization) and, for the spatial domain, the curvelet dictionary which has proved to be very efficient in sparsely representing classical images and was also shown to be a good choice for representing dMRI images in our recent works \cite{Schwab:MICCAI16,Schwab:MIA18}.  

For comparison~\ref{comp:joint_Vs_separate}, we will use the KSVD algorithm \cite{Elad:TSP06} to learn spatial and angular dictionaries independently.
Identifying whether the proposed joint learning method is advantageous over the faster and easier approach of applying KSVD to each domain separately is indeed an important point to examine.

Finally, for comparison~\ref{comp:global_Vs_local}, we evaluate our approach against the Kronecker-Decomposable Robust Sparse Dictionary Learning (KDRSDL) algorithm of \cite{Bahri:ICCV17}, which is also a separable dictionary learning method that does not, however, provide guarantees of global optimality. KDRSDL solves a low-rank variation of \eqref{eq:SeparableDictionaryLearning} which the authors show is useful for background subtracting and image denoising.

We use a ``Spatial-Angular" notation to keep track of the different dictionary choices, where, for example, I-SR uses the identity for the spatial dictionary and spherical ridgelets for the angular dictionary, I-KSVD learns the angular dictionary only using KSVD, and KSVD-KVSD uses the spatial and angular dictionaries learned by KSVD independently. See Table~\ref{table:checklist} for a checklist of the different dictionary properties for each of the 4 comparisons and Table~\ref{table:spatialangular} for a summary of the spatial and angular domains for each method.


\subsection{Visualization}
\label{sec:visualization}
In Figures~\ref{fig:SpatialDictionaries} and \ref{fig:AngularDictionaries} we visualize the spatial and angular dictionaries learned from each method on phantom HARDI data as well as the spherical ridgelet dictionary atoms in Figure~\ref{fig:AngularDictionaries} $A$. The learned dictionary atoms are organized left to right from top to bottom by the number of training examples that used each atom, \ie the number of nonzero coefficients associated to each atom in training. For KSVD, this ordering is independent for the spatial and angular dictionaries, while the atoms resulting from KDRSDL and the proposed method are ordered jointly (without repeats), \ie the top left spatial and angular atoms combine to create the most utilized spatial-angular atom.

\begin{figure}
    \centering
    \includegraphics[width=1\linewidth]{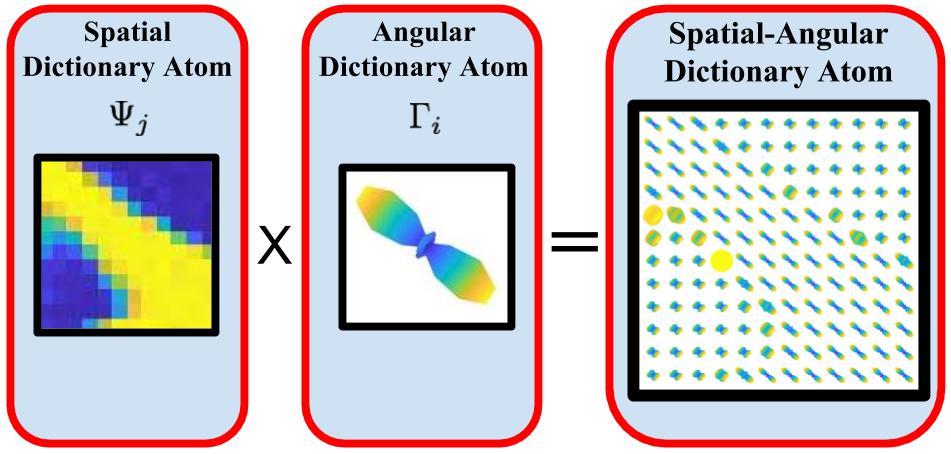}
    \caption{Spatial-Angular dictionary atom example learned jointly from phantom HARDI data with the proposed method. We can see that we have the ability to model fiber tracts with very few atoms. Because the spatial dictionary atom contains positive (yellow), negative (blue), and zero (green) values, when multiplied with the angular dictionary, there are non-informative ODFs in the blue regions of the spatial domain.}
    \label{fig:SpatialAngularDictionariesPhantom}
\end{figure}

For the spatial dictionaries in Figure~\ref{fig:SpatialDictionaries}, we notice clear similarities between our method and the atoms produced by KSVD.  In contrast, the spatial atoms produced by KDRSDL are fuzzier, lacking the clearly defined edges and geometric shapes that are evident in the phantom dataset. These shapes resemble atoms that have landed in a local minimum or saddle point, farther from the global minimum reached by our method. This trend is similar for the angular atoms in Figure~\ref{fig:AngularDictionaries}. We can see that the results of the proposed method has greater variation in the orientations of single fiber ODFs.  The most utilized atoms in KSVD are the purely isotropic atom and the noisy isotropic atoms, whereas the atoms most frequently used with the other methods are the single fiber atoms. In Figure~\ref{fig:SpatialAngularDictionariesPhantom} we show an example of a single spatial-angular atom learned jointly by our proposed separable dictionary learning method the resembles a fiber tract structure.

Finally, in Figure~\ref{fig:spatialangulardictionariesReal} we show spatial and angular dictionaries (bottom) learned from real HARDI brain data (top) for KSVD ($A$.) and our proposed method ($B$.). We notice large structures in the spatial atoms like the CSF region as well as atoms with specific spatial patterns resembling fiber structure. Each spatial-angular atom is sorted (left to right, top to bottom) by their frequency of use in the representation of the training data. For example, the top left spatial and angular atoms are together the most frequently used joint spatial-angular atom in training. (We show only a subset of unique spatial and angular atoms for visualization.)  


\subsection{Denoising results}
\label{sec:results}


The results of the denoising experiment on the phantom HARDI data are recorded in Table~\ref{table:denoising}. We repeated the experiment on three slices of the phantom HARDI data that were not used for training. For each experiment, our reconstruction using the dictionaries learned jointly from our method achieved the highest PSNR values (right-most column of Table~\ref{table:denoising}). These preliminary results give an indication as to answers to the four types of dictionary comparison questions we outlined in Section~\ref{sec:methods}, showing that 1. Spatial-angular dictionaries outperform purely angular dictionaries (see Curve-SR vs I-SR, and KSVD-KSVD vs I-KSVD), 2. Learned dictionaries outperform fixed dictionaries (see I-KSVD vs I-SR, and Proposed vs. Curve-SR), 3. Joint dictionary learning sometimes outperforms separate dictionary learning (Proposed outperforms KSVD-KSVD but KSVD-KSVD outperforms KDRSDL), and 4. Globally optimal solutions outperform locally optimal solutions (see Proposed vs KDRSDL and KSVD-KSVD).

These results provide a preliminary validation of the importance of separable dictionary learning with global optimality guarantees. Note that the reported results were obtained using a mere sparse LASSO reconstruction algorithm once the dictionaries were estimated, which could likely be improved with more advanced approaches tailored to the case of dMRI data \cite{Gramfort:MIA14}.
Figure~\ref{fig:denoisingcloseup} shows the qualitative results of our denoising experiment in comparison to the denoising results of the SR fixed dictionary (I-SR). We do not show the qualitative visualization of all methods here for simplicity. KSVD-KSVD and KDRSDL have very similar qualitative results to the proposed method. Then, in Figure~\ref{fig:denoisingcloseupreal} we show denoising results on real HARDI data using our proposed dictionaries with noticeable regions of improvement highlighted in red.


\begin{table}
\centering
\begin{tabular}{|c|c|c|c|c|c|c|c|}
\hline
\cellcolor{yellow!15}Domain & \multicolumn{3}{c|}{\cellcolor{gray!15}Angular} & \multicolumn{4}{c|}{\cellcolor{gray!35}Spatial-Angular}\\ \hline
\cellcolor{yellow!15}Type & \cellcolor{blue!15}Fixed & \cellcolor{blue!15}Fixed & \cellcolor{red!15}Separate & \cellcolor{blue!15}Fixed & \cellcolor{red!15}Separate & \cellcolor{green!15}Joint & \cellcolor{green!15}Joint\\
 \hline
    \cellcolor{yellow!15} Method &\cellcolor{blue!15} I-SR & \cellcolor{blue!15}I-SR+TV &\cellcolor{red!15} I-KSVD &\cellcolor{blue!15} Curve-SR &\cellcolor{red!15} KSVD-KSVD & \cellcolor{green!15}KDRSDL & \cellcolor{green!15}\textbf{Proposed} \\ \hline
    \cellcolor{yellow!15}  Slice 25 & 16.631 & 16.634 & 18.011 & 17.000 & 19.182 & 18.793 & \textbf{19.501} \\ \hline
     \cellcolor{yellow!15} Slice 30 & 16.715 & 16.720 & 16.090 & 17.087 & 17.001 & 16.725 & \textbf{17.221} \\ \hline
     \cellcolor{yellow!15} Slice 35 & 17.311 & 17.323 & 16.679 & 17.793 & 17.675 & 17.418 & \textbf{17.868} \\ \hline
     \cellcolor{yellow!15} Average & 16.886 & 16.892 & 16.927 & 17.293 & 17.953 & 17.645 & \textbf{18.197} \\ \hline
\end{tabular}
     \caption{Peak Signal-to-Noise Ratio (PSNR) denoising results on three different 2D HARDI phantom image slices. We compared the domains of angular vs spatial-angular sparse coding with dictionaries that are either of type fixed (purple), learned in the spatial and angular domains separately (pink), or learned in the spatial-angular domain jointly (green). Denoising using our proposed joint spatial-angular dictionary learning method with global optimality outperforms denoising with both fixed and learned dictionaries from other methods. }
     \label{table:denoising}
\end{table}

\begin{figure}
    \centering
    \includegraphics[width=.45\linewidth,trim= 1500 70 1500 70, clip]{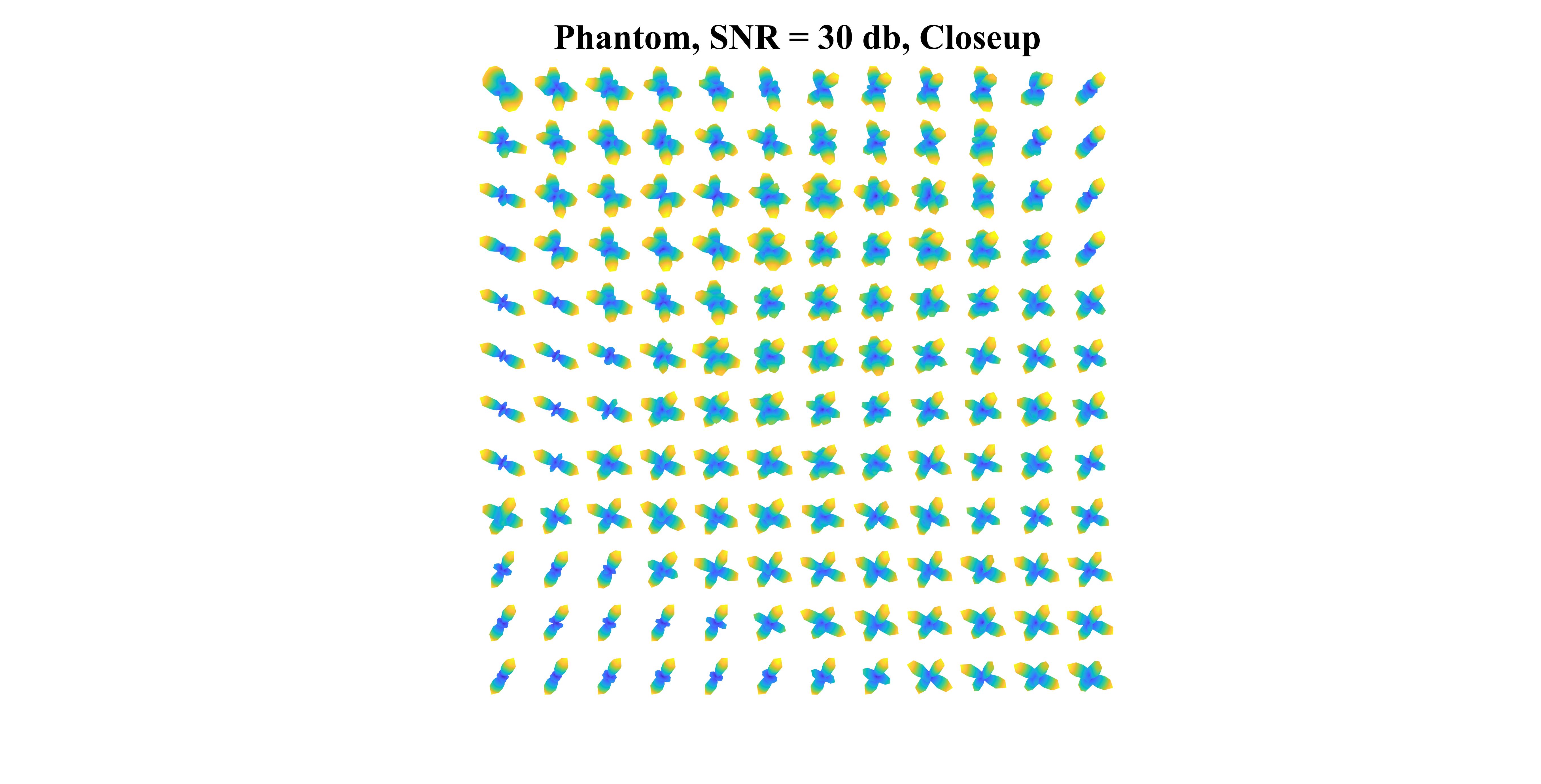}
    \includegraphics[width=.45\linewidth,trim= 1500 70 1500 70, clip]{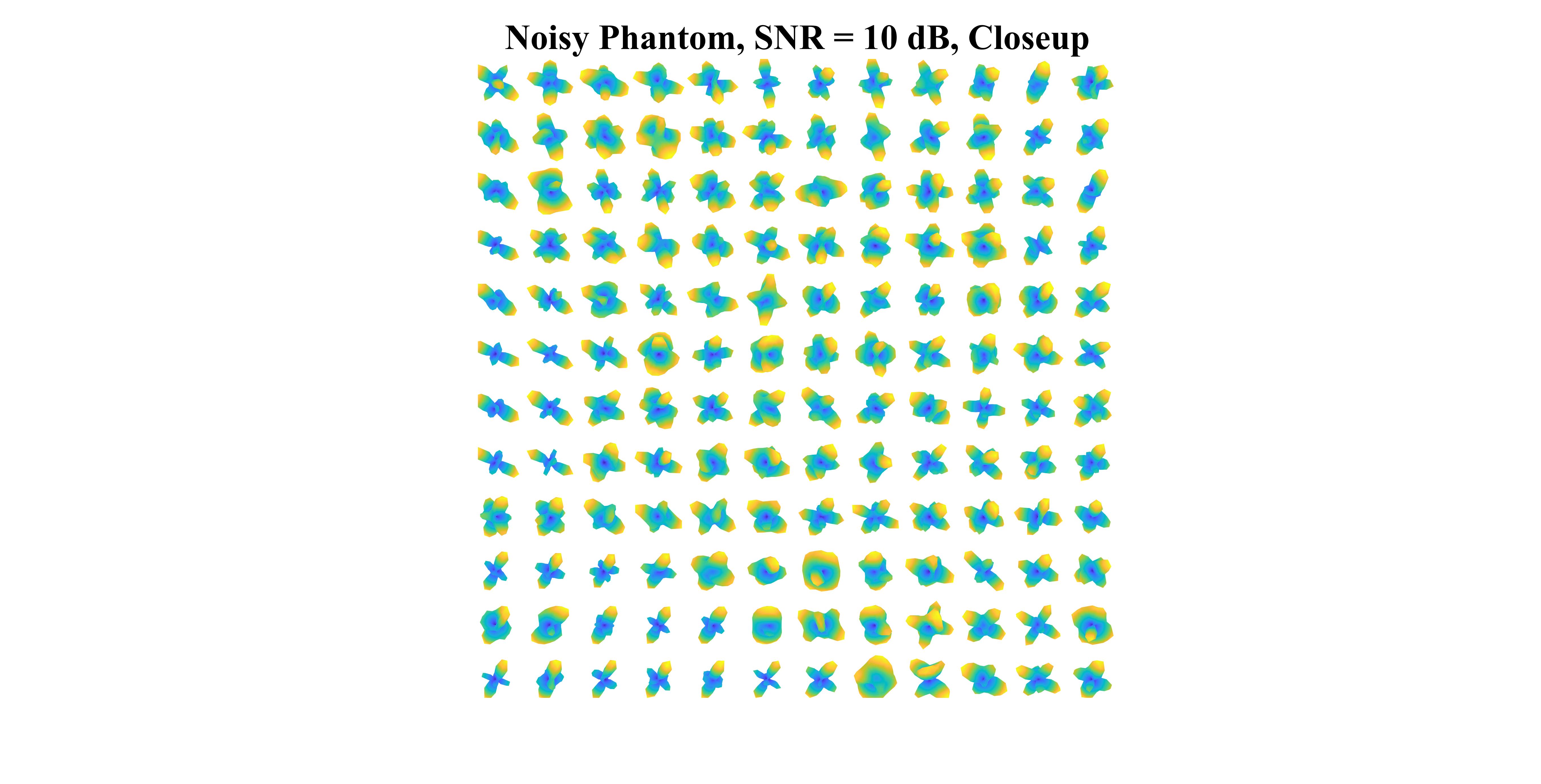}\\
    \includegraphics[width=.45\linewidth,trim= 1500 70 1500 70, clip]{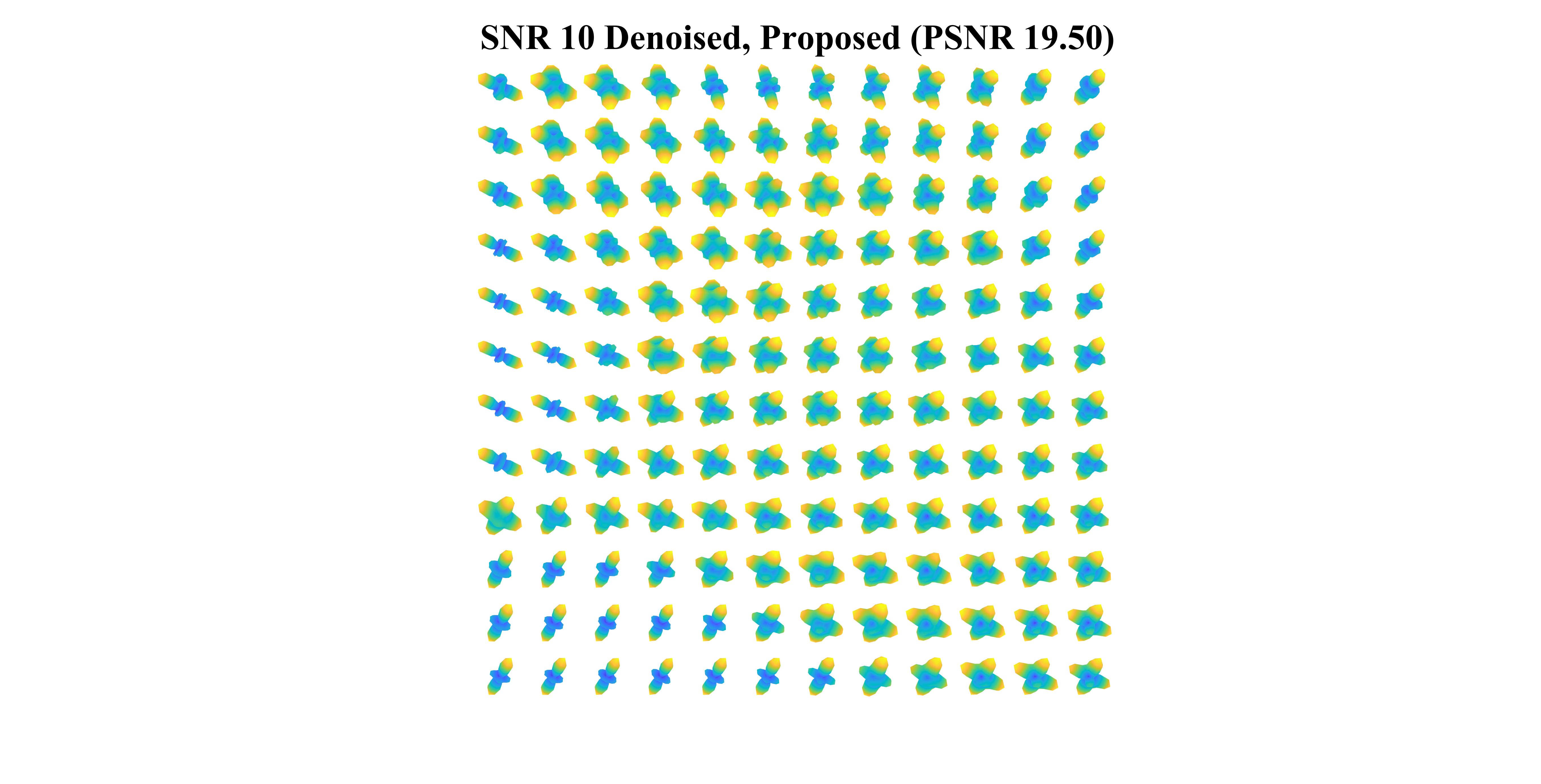}
    \includegraphics[width=.45\linewidth,trim= 400 70 450 70, clip]{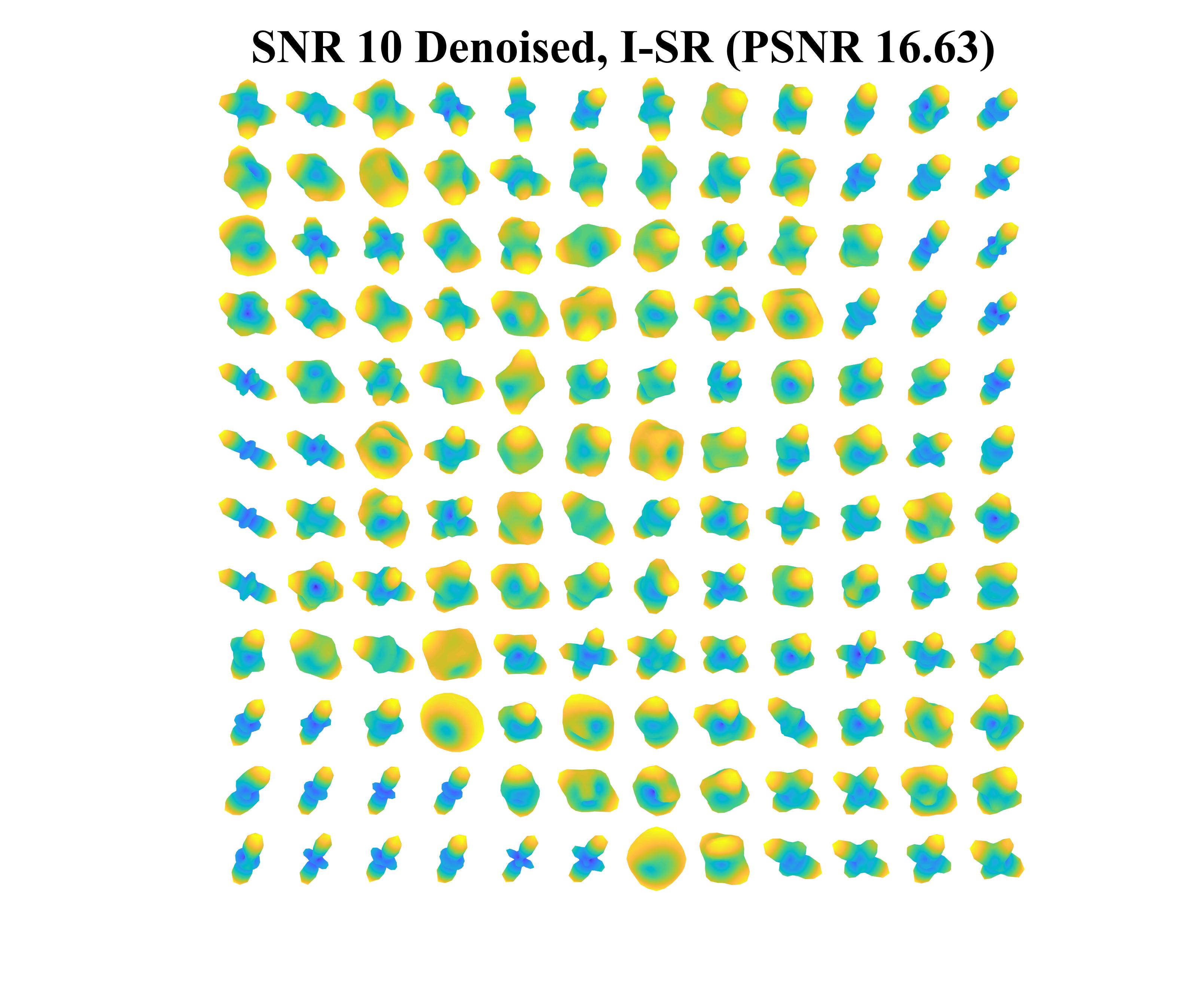}
    \caption{Qualitative results of HARDI phantom denoising experiment. The reconstruction of the noisy SNR=10 dB HARDI phantom (top right) using our proposed spatial-angular dictionary (bottom left) produces a more accurate denoised reconstruction in comparison to the original phantom with SNR=30 dB (top left), than for the fixed spherical ridgelet (SR) dictionary (bottom right).}
    \label{fig:denoisingcloseup}
\end{figure}

\begin{figure}
    \centering
    \includegraphics[width=.48\linewidth,trim= 1950 250 1800 70, clip]{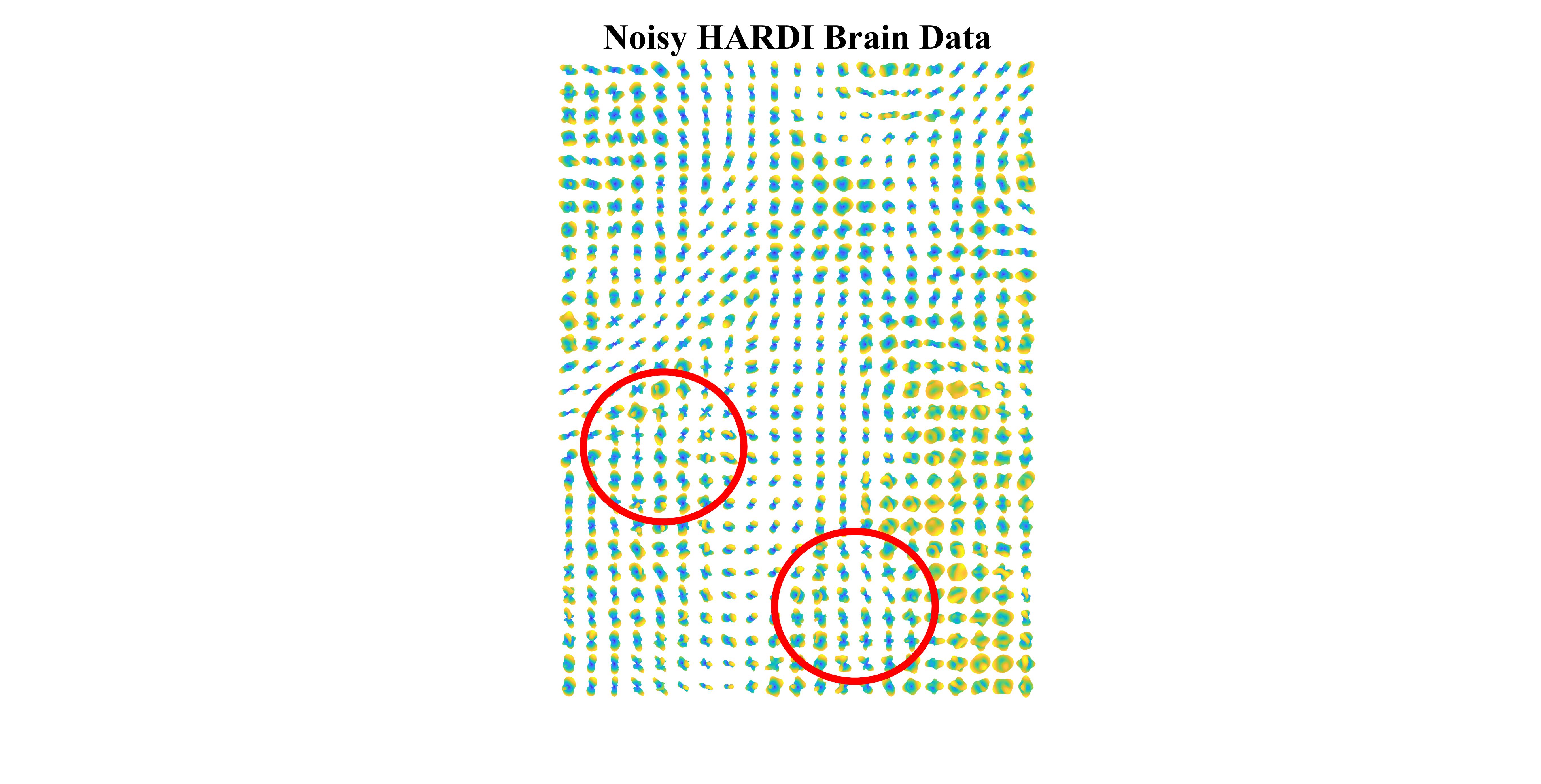}
    \includegraphics[width=.48\linewidth,trim= 1950 250 1800 70, clip]{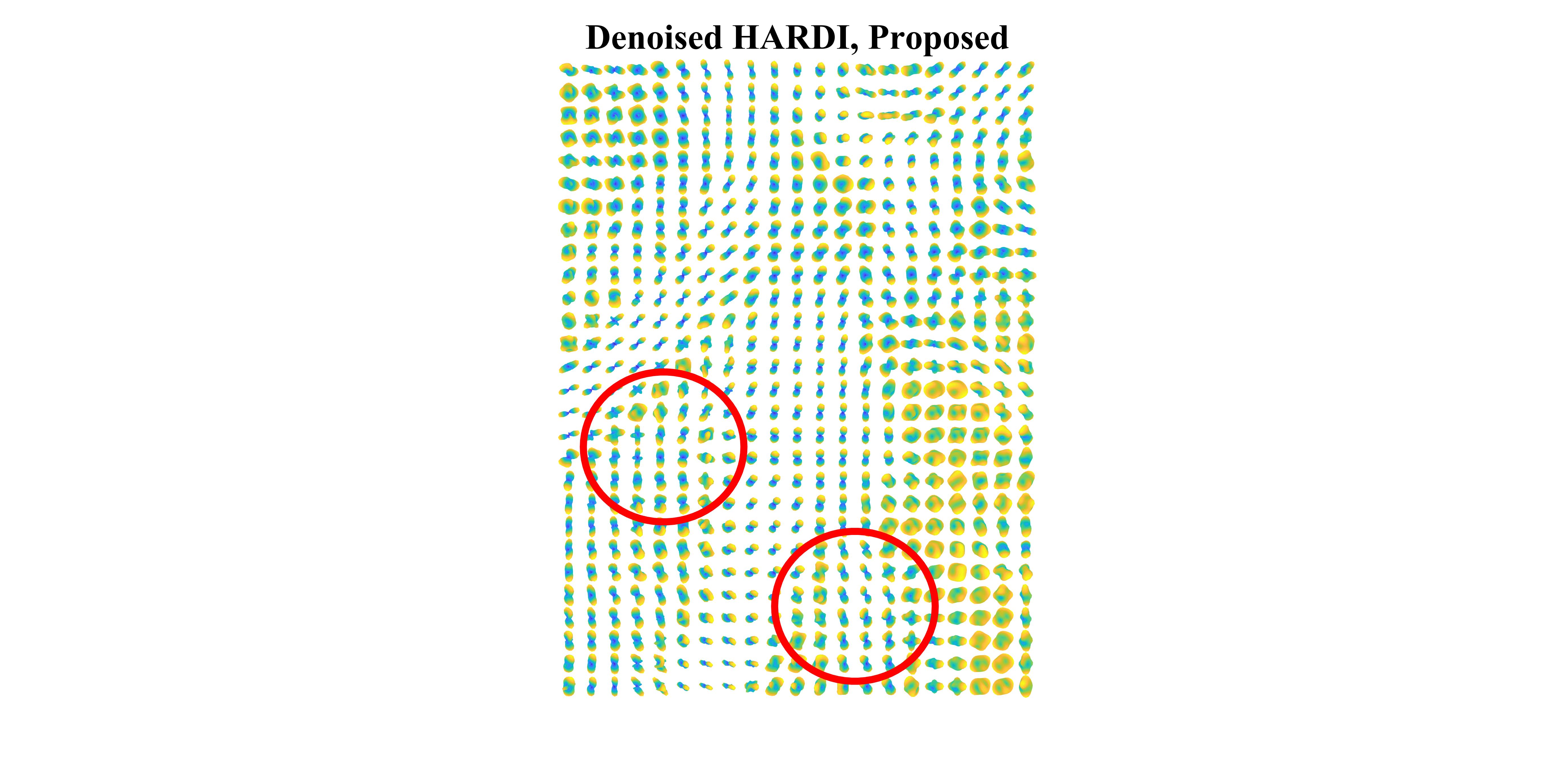}
    \caption{Denoising Real HARDI brain data. Left: Original noisy HARDI brain region. Right: Denoised reconstruction using our learned spatial-angular dictionaries within our spatial-angular sparse coding. Denoising of diffusion signals can allow for more robust fiber tractography with clearer peak directions.}
    \label{fig:denoisingcloseupreal}
\end{figure}

\section{Conclusion}
\label{sec:conclusion}

In this work, we proposed a mathematical formulation of the separable dictionary learning problem for which we are able to derive, to the best of our knowledge, the first conditions of global optimality. To this end, we have framed this problem as a tensor factorization, extending theoretical results from two-factor matrix factorization to the more complex case of three-factor tensor factorization appearing in separable dictionary learning.

With this theoretical base, we have proposed a novel algorithm to find global minima of the separable dictionary learning problem with unspecified dictionary sizes by alternating between a local descent step to a stationary point and a check for global optimality. If the global criteria is not satisfied, the algorithm will append an additional dictionary atom and continue the descent to another stationary point. In this way, our algorithm provides a ``rank-aware" methodology that could provide low-rank or overcomplete solutions, a reasonable midpoint between the low-rank solutions of KDRSDL and the overcomplete solutions of KSVD. This too depends on the initial dictionary size which may be application specific.  Furthermore, the alternation of updates between each separate dictionary is flexible in our algorithm, and can be tailored to specific $\textit{a priori}$ knowledge of the relative dictionary sizes based on the data.

As a proof of concept, we applied the proposed algorithm to the domain of dMRI which is well suited for our framework due to the spatial-angular structure of the data. While most dictionary learning methods for dMRI restrict to learning dictionaries for the angular domain or learn separately spatial and angular dictionaries, we learn both of those jointly in this work. We showed, for a simple denoising procedure, that using spatial and angular dictionaries learned jointly, outperforms dMRI denoising algorithms relying on angular dictionaries alone. Furthermore, we validated that joint learning provides better reconstructions than the alternative of learning spatial and angular dictionaries independently by simpler methods such as KSVD. Finally, our results indicate that having a globally optimal solution also outperforms methods like KDRSDL that may be subject to convergence toward local minima.

In future work we will aim to extend the theory and algorithms presented in this paper to incorporate convolutional methods that will relate local patch dictionaries to the global image for the task of global sparse coding and compressed sensing in diffusion MRI and other applications.




\section*{Appendix A}
\label{sec:appendixA}
\begin{proof}[Proof of Proposition~\ref{appendix:propOmega}]
We will assume, in a first phase, that the infimum in \eqref{eq:OmegaT} can be achieved for finite $r_1$ and $r_2$ (which is proved in the last point below) and drop the minimization in $r_1$ and $r_2$ to lighten the derivations.
\begin{enumerate}
\item First, since ${\theta}(\gamma,\psi,c) \geq 0 \ \forall (\gamma,\psi,c)$, we have that $\Omega_{\theta}(\u{X}) \geq 0 \ \forall \u{X}$. Then, the infimum $\Omega_{\theta}(0) = 0$ can be achieved by taking $(\Gamma,\Psi,\u{C}) = (0,0,0)$.  If $\u{X}=\u{C}\times_1\Gamma \times_2 \Psi$ and $\u{X} \neq 0$, we can write equivalently $\u{X}= \sum_{i=1}^{r_1}\sum_{j=1}^{r_2} \Gamma_i \otimes \Psi_j \otimes C_{i,j}$ and there exists $i_0,j_0$ such that $\Gamma_{i_0} \neq 0, \Psi_{j_0} \neq 0, C_{i_0,j_0} \neq 0$ and thus $\Omega_{\theta}(\u{X}) \geq \theta(\Gamma_{i_0},\Psi_{j_0},C_{i_0,j_0}) > 0$ thanks to the second property in Proposition \ref{appendix:propOmega}.
\item With substitution $(\bar{\Gamma},\bar{\Psi},\bar{\u{C}}) := (\alpha^{-1/3}\Gamma,\alpha^{-1/3}\Psi,\alpha^{-1/3}\u{C})$ and using the positive homogeneity of ${\theta}$,
\begin{align*}
\Omega_{\theta}(\alpha \u{X}) &= \inf_{\Gamma,\Psi,\u{C}} \sum_{i=1}^{r_1}\sum_{j=1}^{r_2} {\theta}(\Gamma_i,\Psi_j,C_{i,j})  \ \textnormal{s.t.}  \ \u{C}\times_1\Gamma \times_2 \Psi = \alpha \u{X}\\
&= \inf_{\Gamma,\Psi,\u{C}} \sum_{i=1}^{r_1}\sum_{j=1}^{r_2} {\theta}(\Gamma_i,\Psi_j,C_{i,j})  \ \textnormal{s.t.}  \ (\alpha^{-1/3})\u{C} \times_1 (\alpha^{-1/3})\Gamma \times_2 (\alpha^{-1/3})\Psi =  \u{X}\\
&= \inf_{\bar{\Gamma},\bar{\Psi},\bar{\u{C}}} \sum_{i=1}^{r_1}\sum_{j=1}^{r_2} {\theta}(\alpha^{1/3} \bar{\Gamma_i},\alpha^{1/3}\bar{\Psi}_j,\alpha^{1/3} \bar{C}_{i,j})  \ \textnormal{s.t.}  \ \bar{\u{C}}\times_1 \bar{\Gamma} \times_2 \bar{\Psi} =  \u{X}\\
&= \inf_{\bar{\Gamma},\bar{\Psi},\bar{\u{C}}} \alpha \sum_{i=1}^{r_1}\sum_{j=1}^{r_2} {\theta}(\bar{\Gamma_i},\bar{\Psi}_j, \bar{C}_{i,j})  \ \textnormal{s.t.}  \  \bar{\u{C}}\times_1 \bar{\Gamma} \times_2 \bar{\Psi} =  \u{X}\\
&= \alpha \Omega_{\theta}(\u{X}).
\end{align*}
\item Let $\u{X} = \u{C}^{X}\times_1\Gamma^{X} \times_2\Psi^{X}$ and $\u{Y} = \u{C}^{Y}\times_1\Gamma^{Y} \times_2\Psi^{Y}$ be two $\epsilon$-optimal factorizations, i.e. such that $\sum_{i=1}^{r_1}\sum_{j=1}^{r_2} {\theta}(\Gamma^{X}_{i},\Psi^{X}_{j},C^{X}_{i,j}) \leq \Omega_{\theta}(\u{X}) + \epsilon$ and a similar expression for $\u{Y}$. Now we construct $\Gamma = [\Gamma^{X}, \ \Gamma^{Y}]$, $\Psi = [\Psi^{X}, \ \Psi^{Y}]$, and $\u{C}$ such that for all $t=1,\ldots,T$, $C_t = 
\begin{bmatrix}
C^{X}_{t} & 0\\
0 & C^{Y}_{t}
\end{bmatrix}$. Then $\u{X}+\u{Y} = \u{C} \times_1\Gamma \times_2 \Psi$ and:
\begin{align*}
  \Omega_{\theta}(\u{X}+\u{Y}) &\leq  \sum_{i=1}^{r_1^X+r_1^Y}\sum_{j=1}^{r^X_2+r_Y^2} \theta(\Gamma_i,\Psi_{j},C_{i,j}) \\
  &= \sum_{i=1}^{r_1^X}\sum_{j=1}^{r^X_2} \theta(\Gamma^X_i,\Psi^X_{j},C^X_{i,j}) + \sum_{i=1}^{r_1^Y}\sum_{j=1}^{r^Y_2} \theta(\Gamma^Y_i,\Psi^Y_{j},C^Y_{i,j}) \\
  &\leq \Omega_{\theta}(\u{X}) + \Omega_{\theta}(\u{Y}) +2\epsilon
\end{align*}
where the second line equality results form the fact that $\theta(\Gamma^X_i,\Psi^Y_j,C_{i,j})= \theta(\Gamma^X_i,\Psi^Y_j,0)=0$ and similarly $\theta(\Gamma^Y_i,\Psi^X_j,C_{i,j})=0$. Taking $\epsilon \rightarrow 0$ completes the proof of the triangle inequality.
\item Assuming ${\theta}(-\gamma,\psi,c) = {\theta}(\gamma,\psi,c)$, (as is true for $-\psi$ or $-c$) and setting $\bar{\Gamma} := -\Gamma$,
\begin{align*}
\Omega_{\theta}(-\u{X}) &= \inf_{\Gamma,\Psi,\u{C}} \sum_{i=1}^{r_1}\sum_{j=1}^{r_2} {\theta}(\Gamma_i,\Psi_j,C_{i,j})  \ \textnormal{s.t.}  \ \u{C} \times_1 \Gamma \times_2 \Psi = -\u{X}\\
&= \inf_{\Gamma,\Psi,\u{C}} \sum_{i=1}^{r_1}\sum_{j=1}^{r_2} {\theta}(\Gamma_i,\Psi_j,C_{i,j})  \ \textnormal{s.t.}  \ \u{C} \times_1 -\Gamma \times_2 \Psi = \u{X}\\
&= \inf_{\bar{\Gamma},\Psi,\u{C}} \sum_{i=1}^{r_1}\sum_{j=1}^{r_2} {\theta}(-\bar{\Gamma}_i,\Psi_j,C_{i,j})  \ \textnormal{s.t.}  \ \u{C} \times_1 \bar{\Gamma} \times_2 \Psi  = \u{X}\\
&= \inf_{\bar{\Gamma},\Psi,\u{C}} \sum_{i=1}^{r_1}\sum_{j=1}^{r_2} {\theta}(\bar{\Gamma}_i,\Psi_j,C_{i,j})  \ \textnormal{s.t.}  \ \u{C} \times_1 \bar{\Gamma} \times_2 \Psi  = \u{X}\\
&=\Omega_{\theta}(\u{X}).
\end{align*}
\item In order to show that there exists a global minimum with finite $r_1$ and $r_2$ in the definition of $\Omega_{\theta}$, we start by introducing the function defined by:
\begin{equation}
    \label{eq:OmegaT_tilde}
 \widetilde{\Omega}_{\theta}(\underline{X}) := \inf_{r \in \mathbb{N}_+} \inf_{\substack{\Gamma \in \mathbb{R}^{G \times r}\\\Psi \in \mathbb{R}^{V \times r}\\\Lambda \in \mathbb{R}^{T \times r}}} \sum_{i=1}^{r} \theta(\Gamma_i,\Psi_i,\Lambda_i) \ \ \textnormal{s.t.} \ \  \sum_{i=1}^{r} \Gamma_i \otimes \Psi_i \otimes \Lambda_i = \underline{X}.   
\end{equation}
where $\Gamma_i,\Psi_i,\Lambda_i$ denote the i-th column of the respective matrices. This essentially corresponds to the same definition as $\Omega_{\theta}$ but with the additional constraints that $r_1=r_2=r$ and that $\underline{C}$ is a slice by slice diagonal tensor. In fact, it turns out that the two polar functions are equal, i.e. $\Omega_{\theta}(\u{X}) = \widetilde{\Omega}_{\theta}(\underline{X})$ for all $\u{X} \in \R^{G\times V \times T}$, as we show below.

First, we have $\Omega_{\theta}(\u{X}) \leq \widetilde{\Omega}_{\theta}(\underline{X})$. Indeed, if $\Gamma \in \R^{G\times r}$, $\Psi \in \R^{V \times r}$, $\Lambda \in \R^{r \times T}$ are such that $\sum_{i=1}^{r} \Gamma_i \otimes \Psi_i \otimes \Lambda_i = \u{X}$, we can define the tensor $\u{C} \in \R^{r \times r \times T}$ with, for any $t=1,\ldots,T$,
\begin{equation*}
    C_t = \begin{bmatrix}\Lambda_{t,1} & 0 & \cdots & 0 \\ 0 & \Lambda_{t,2} & \cdots & 0 \\ \vdots & \cdots & \ddots &\vdots \\ 0 & \cdots & \cdots & \Lambda_{t,r} \end{bmatrix}.
\end{equation*}
Then, we can see that for all $t=1,\ldots,T$ 
\begin{align*}
   \left(\u{C}\times_1\Gamma \times_2 \Psi \right)_t &= \Gamma C_t \Psi^T \\
   &= \sum_{i=1}^{r} \Lambda_{t,i} \Gamma_i \Psi_i^T \\
   &= \left(\sum_{i=1}^{r} \Gamma_i \otimes \Psi_i \otimes \Lambda_i \right)_{t}
\end{align*}
and therefore $\u{C}\times_1\Gamma \times_2 \Psi = \u{X}$. Furthermore $\sum_{i=1}^{r} \sum_{j=1}^{r} \theta(\Gamma_i,\Psi_j,C_{i,j}) = \sum_{i=1}^{r} \theta(\Gamma_i,\Psi_i,\Lambda_i)$ due to the fact that, by definition, $C_{i,j}=0$ for $i\neq j$. The inequality follows from the definition of $\Omega_{\theta}$.

Conversely, we show that $\Omega_{\theta}(\u{X}) \geq \widetilde{\Omega}_{\theta}(\underline{X})$. Let $\Gamma \in \R^{G\times r_1}$, $\Psi \in \R^{V \times r_2}$ and $\u{C} \in \R^{r_1 \times r_2 \times T}$ such that $\u{C}\times_1\Gamma \times_2 \Psi = \u{X}$. Define $r=r_1 r_2$ and the lexicographic ordering of pairs $l: \{1,\ldots,r_1\} \times \{1,\ldots,r_2\} \rightarrow \{1,\ldots,r\}$. We also set $\widetilde{\Gamma} \in \mathbb{R}^{G \times r}$ such that $\widetilde{\Gamma}_{l(i,j)} = \Gamma_i$, $\widetilde{\Psi} \in \mathbb{R}^{V \times r}$ such that $\widetilde{\Psi}_{l(i,j)} = \Psi_j$ and $\Lambda_{l(i,j),t}=c_{i,j,t}$. We then obtain for all $t=1,\ldots,T$,
\begin{align*}
   X_t &=(\u{C}\times_1\Gamma \times_2 \Psi)_t \\
   &=\sum_{i=1}^{r_1} \sum_{j=1}^{r_2} c_{i,j,t} \Gamma_i \otimes \Psi_j \\
   &=\sum_{l=1}^{r} \Lambda_{l,t} \widetilde{\Gamma}_l \otimes \widetilde{\Psi}_l \\
   &=\left(\sum_{l=1}^{r} \widetilde{\Gamma}_l \otimes \widetilde{\Psi}_l \otimes \Lambda_{l} \right)_t
\end{align*}
and consequently $\u{X} = \sum_{l=1}^{r} \widetilde{\Gamma}_l \otimes \widetilde{\Psi}_l \otimes \Lambda_{l}$. Now, by construction, we also get that $\sum_{l=1}^r \theta(\widetilde{\Gamma}_l,\widetilde{\Psi}_l,\Lambda_{l}) = \sum_{i=1}^{r_1} \sum_{j=1}^{r_2} \theta(\Gamma_i,\Psi_j,C_{i,j})$.  Consequently, any value of the minimization problem \eqref{eq:OmegaT} can be obtained by \eqref{eq:OmegaT_tilde} thanks to the previous transformation, giving the desired inequality. 

We now only need to show that a global minimum in \eqref{eq:OmegaT_tilde} can be achieved with a finite $r$, which will give a global minimum of \eqref{eq:OmegaT} with $r_1=r_2=r$. We can follow an argument similar to the one presented in \cite{Haeffele:PAMI2019} that we briefly recap. Let $\Theta \subset \mathbb{R}^{G \times V \times T}$ defined by $\Theta = \{\underline{X}: \ \exists (\gamma,\psi,\lambda)/ \ \underline{X} = \gamma \otimes \psi \otimes \lambda \ \text{and} \ \theta(\gamma,\psi,\lambda) \leq 1 \}$ which is a compact subset of $\mathbb{R}^{G \times V \times T}$ thanks to the third condition in Definition \ref{def:theta_T}. With the same reasoning as \cite{Haeffele:PAMI2019}, we know that $\widetilde{\Omega}_{\theta}$ is equivalent to the following gauge function on the convex hull of $\Theta$:
\begin{equation*}
    \widetilde{\Omega}_{\theta}(\underline{X}) = \inf \{\mu: \ \mu \geq 0, \underline{X} \in \mu \text{conv}(\Theta) \}.
\end{equation*}
Now since $\Theta$ and thus $\text{conv}(\Theta)$ are compact sets, the previous infimum over $\mu$ is achieved for a certain $\mu^* \geq 0$. Then $\underline{X} \in \mu^* \text{conv}(\Theta)$ and from Caratheodory's theorem, we know that any point in $\text{conv}(\Theta)$ can be written as a finite convex combination of a most $G \times V \times T$ elements in $\Theta$. In other words, there exist $(\Gamma_i,\Psi_i,\Lambda_i)_{i=1,\ldots,r}$ with $r \leq G \times V \times T$ and $\beta_1,\ldots,\beta_r \geq 0$ with $\beta_1+\ldots+\beta_r=1$ such that 
\begin{equation*}
\underline{X} = \mu^* \sum_{i=1}^{r} \beta_i \Gamma_i \otimes \Psi_i \otimes \Lambda_i = \sum_{i=1}^{r} \Gamma^*_i \otimes \Psi^*_i \otimes \Lambda^*_i,
\end{equation*}
with $\Gamma_i^* = \sqrt[3]{\beta_i\mu^*} \Gamma_i, \Psi_i^* = \sqrt[3]{\beta_i \mu^*} \Psi_i, \Lambda^*_i=\sqrt[3]{\beta_i \mu^*} \Lambda_i$ for all $i$. Now since $\mu^* = \widetilde{\Omega}_{\theta}(\underline{X})$ and by the positive homogeneity of $\theta$, we obtain: 
\begin{equation*}
 \sum_{i=1}^{r} \theta(\Gamma^*_i , \Psi^*_i , \Lambda^*_i) = \mu^* \sum_{i=1}^{r} \beta_i \theta(\Gamma_i , \Psi_i , \Lambda_i) \leq \widetilde{\Omega}_{\theta}(\underline{X}),
\end{equation*}
which implies that $(\Gamma^*_i , \Psi^*_i , \Lambda^*_i)$ is a global minimum of \eqref{eq:OmegaT_tilde} with finite $r$.
\end{enumerate}
\end{proof}

\section*{Appendix B}
\label{sec:appendixB}
\subsection*{Proximal Gradient Descent with Nesterov Acceleration}
Here in Algorithm~\ref{appendix:nesterovDL} we formalize the Proximal Gradient Descent Algorithm~\ref{alg:prox} with the additional process of Nesterov Acceleration to speed up the rate of convergence. We use Nesterov Acceleration within our current implementation.


 \begin{algorithm}
\caption{Proximal Gradient Descent with Nesterov Acceleration}
\begin{algorithmic}
\label{appendix:nesterovDL}
  \STATE Initialize: $k=0,\check{\Gamma}^0, \check{\Psi}^0, \u{\check{C}}^0, \lambda, r_1, r_2.$
  \WHILE{error $> \epsilon$}
    \STATE $\Gamma^{k+1}_i = \textnormal{prox}_{\xi_i^k||\cdot||_2}( \check{\Gamma}^{k}_i - \xi_i^k [\nabla_{\check{\Gamma}^k} \ell]_i)$\label{eq:updateGammaNes}
    \STATE $c_{i,j,t}^{k+1} = \textnormal{prox}_{\kappa_{i,j}^k|\cdot|}( \check{c}_{i,j,t}^{k} - \kappa_{i,j}^k [\nabla_{\check{C}_t^k} \ell]_{i,j})$ \label{eq:updateCtNes}
    \STATE $\Psi^{k+1}_j = \textnormal{prox}_{\pi_j^k||\cdot||_2}( \check{\Psi}^{k}_j - \pi_j^k [\nabla_{\check{\Psi}^k} \ell]_j).$\label{eq:updatePsiNes}
\IF{$f(\Gamma^k,\Psi^k,\u{C}^k) < f(\Gamma^{k-1},\Psi^{k-1},\u{C}^{k-1})$}
\STATE $s_k = (1 + \sqrt{1 + 4s_{k-1}^2})/2$
\STATE $\mu = (s_{k-1} -1)/2$
\STATE $\mu_\Gamma = \min(\mu, \sqrt{L_\Gamma^{k-1} / L_\Gamma^k})$
\STATE $\mu_{C_t} = \min(\mu, \sqrt{L_{C_t}^{k-1} / L_{C_t}^k}) \ \forall t$
\STATE $\mu_\Psi = \min(\mu, \sqrt{L_\Psi^{k-1} / L_\Psi^k})$
\STATE $\check{\Gamma}^{k+1} = \Gamma^k + \mu_\Gamma(\Gamma^k - \Gamma^{k-1})$
\STATE $\check{C}_t^{k+1} = C_t^k + \mu_{C_t}(C_t^k - C_t^{k-1})$
\STATE $\check{\Psi}^{k+1} = \Psi^k + \mu_\Psi(\Psi^k - \Psi^{k-1})$
\ELSE
\STATE $s_k = s_{k-1}$
\STATE $\check{\Gamma}^{k+1} = \Gamma^{k-1}$
\STATE $\check{C}_t^{k+1} = C_t^{k-1} \ \forall t$
\STATE $\check{\Psi}^{k+1} = \Psi^{k-1}$
\ENDIF
  \STATE $k \rightarrow k+1$
  \ENDWHILE
  \RETURN stationary point $(\tilde{\Gamma},\tilde{\Psi},\tilde{\u{C}})$
\end{algorithmic}
\end{algorithm}

\end{document}